\numberwithin{equation}{section}
\theoremstyle{definition}
\newtheorem{thm}[equation]{Theorem} 
\theoremstyle{definition}
\def\fnum{equation}
\newtheorem{Thm}[\fnum]{Theorem}
\newtheorem{Cor}[\fnum]{Corollary}
\newtheorem{Key Lemma}[\fnum]{Key Lemma}
\newtheorem{Lem}[\fnum]{Lemma}
\newtheorem{Rem}[\fnum]{Remark}
\newtheorem{Pro}[\fnum]{Proposition}
\newcommand{\RR}{\mathbb R}
\newcommand{\ep}{\varepsilon}
\newcommand{\na}{\nabla}
\newcommand{\la}{\langle}
\newcommand{\ra}{\rangle}
\newcommand{\M}{\mathcal M}
\title{On the rate of convergence of cylindrical singularity in mean curvature flow}
\author{Yiqi Huang and Xinrui Zhao}
\address{MIT, Dept. of Math., 77 Massachusetts Avenue, Cambridge, MA 02139-4307}
\address{Department of Mathematics
Yale University, 219 Prospect St, Floors 7–9, New Haven, CT 06511}
\email{yiqih777@mit.edu and xinrui.zhao@yale.edu}
\begin{document}
	\begin{abstract}
We prove that if a rescaled mean curvature flow is a global graph over the round cylinder with small gradient and converges super-exponentially fast, then it must coincide with the cylinder itself. We also show that the result is sharp with counter-examples of local graphs at arbitrarily super-exponential convergence rate with the domain expanding arbitrarily fast. 

The first part provides the first unique continuation result in the cylindrical setting, the generic singularity model in mean curvature flow. In sharp contrast, in the second part we construct infinite-dimensional families of Tikhonov-type examples for nonlinear equations, including the rescaled mean curvature flow, showing that unique continuation fails for local graphical solutions. These examples demonstrate the essential role of global graphical assumptions in rigidity and highlight new phenomena absent in the compact case. We also construct non-product mean curvature flows that develop singular sets as prescribed lower dimensional Euclidean space at arbitrary super-exponential rates. Our construction works in great generality for a large class of non-linear equations.
\end{abstract}

\maketitle

\tableofcontents

\vspace{-1cm}

\section{Introduction}

In this paper, we study the convergence rate and unique continuation problem for mean curvature flow, which is a geometric evolution equation where there is a family of immersions $\varphi \colon M^n \times I \to \mathbb{R}^{n+k}$ evolving by the mean curvature 
\[\frac{\partial \tilde{\varphi}}{\partial t} = \overrightarrow{H},
\]
with $\overrightarrow{H}$ denoting the mean curvature vector of the submanifold $M_t = \varphi(M, t)$.

As singularities inevitably occur for mean curvature flows starting from closed hypersurfaces, understanding their formation and structure is a crucial and central question. A standard approach is to study singularities via blowup analysis. If $(0,0) \in \mathbb{R}^{n+k} \times \mathbb{R}$ is a singularity of the flow, one can rescale the flow by $\lambda_i \to \infty$ centered at $(0,0)$ to obtain $M_t^i := \lambda_i M_{\lambda_i^{-2}t}$. By Huisken's monotonicity formula \cite{H90} and Brakke's compactness theorem \cite{B78}, we know that there is a subsequence that converges to a self-similar limit flow - a \emph{tangent flow}. Moreover, if the limit flow is smooth, it is generated by rescaling a hypersurface called \emph{self-shrinker}, satisfying $\mathbf{H} = -\frac{x^\perp}{2}$. Equivalently, \emph{self-shrinkers} are the static solutions for the following \emph{rescaled mean curvature flow} equation
\begin{align}\label{e:recaled mcf}
        \frac{\partial \tilde{\varphi}}{\partial t} = \overrightarrow{H}+\frac{\tilde{\varphi}^\perp}{2},
    \end{align}

A priori, the tangent flow may depend on the choice of rescaling sequence. The uniqueness of the tangent flows, as one of the central problems in the analysis of the singularities, has been resolved in several important cases, \cites{S14, CM15,CS, LSS, CM23,S23, Z20,LZ,CSSZ}. Closely related to uniqueness problem is the question of 
the convergence rate of the rescaled mean curvature flows toward the tangent flow, particularly the \textbf{unique continuation problem}. That is, if a mean curvature flow approaches a tangent flow sufficiently fast, does this 
force the flow to be identical to the tangent flow?

\subsection{Unique continuation for global graphs}

The unique continuation problem has already been extensively studied for elliptic and parabolic equations. See \cites{Car39, AKS62,GL86,GL87,P96,ESS03} and the references therein. In the mean curvature flow setting, the spherical case was studied by Sesum \cite{S08} and Strehlke \cite{S20}. In \cite{S20}, he proved that if the rescaled mean curvature flow converges to a round sphere super-exponentially, then it must a round sphere itself. In \cite{MS23}, Martin-Hagemayer and Sesum generalized this unique continuation property to any compact shrinkers. For more recent results in this direction, see for example \cites{DH25,DH252,SWX25}. See also the related question in Ricci flows in \cites{SW15, Kot22,CL25}.

In this paper, we focus on the unique continuation problem for singularities modeled on cylinders. Cylinders are of particular importance, as they are the only \emph{generic} shrinkers \cites{CM12,CIMW13,ALW14,CM151,CM16,B16,BW17,CM19,BC19,CM21,BC21,CM23,CCMS24}. 
At the same time, they present essential new challenges. Unlike compact shrinkers, the noncompactness nature of cylinders creates substantial analytic difficulties. Compared with other noncompact shrinkers, such as conical singularities, the cylinder is even more degenerate: it gives rise to non-isolated singularities and requires delicate control of the flow in annular regions around the singular set. See for example \cites{W97,W00,W14,GK15,BW17,GKS18,KKM,CS,Z21,CHHW22,LZ}. These features make the cylindrical case qualitatively harder and correspondingly more intriguing. See a related unique continuation conjecture in \cite{SWX25}*{Conjecture 1.6}.

Our first main result confirms the unique continuation property for global graphs in the cylindrical setting.

\begin{Thm}\label{t:main}
     Let $\tilde{\varphi}:M\times[0,\infty)\to \RR^{n+1}$ be a solution to the rescaled mean curvature flow equation \eqref{e:recaled mcf} such that $M_t \equiv \tilde{\varphi}(M,t)$ can be expressed as a normal graph of a function $u(x,t)$ over the round cylinder $\mathcal{C}_k=\mathbb{S}^{n-k}_{\sqrt{2(n-k)}}\times \RR^{k}$. Then there exists some $\ep_0(n)$ such that the following holds: if $\|u\|_{C^1(\mathcal{C}_k \times [0, \infty))}\leq \ep_0$ and we have
    \begin{equation*}
    \lim_{t\to \infty} e^{\alpha t}\int_{\mathcal{C}_k} u^2(x,t) e^{-\frac{|x|^2}{4}}d\mu =0 \text{ for any } \alpha\in \mathbb{N}
\end{equation*}   
where $d\mu$ is the standard measure on $\mathcal{C}_k$, then we have $u \equiv 0$.
\end{Thm}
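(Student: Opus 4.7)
The strategy is to adapt the spectral mode-decomposition approach of Strehlke \cite{S20} and Martin-Hagemayer--Sesum \cite{MS23} from the compact setting to the noncompact cylindrical one. First, derive the PDE for $u$: writing $M_t$ as a normal graph over $\mathcal{C}_k$, the rescaled mean curvature flow gives
\[
\partial_t u = L u + N(u, \na u, \na^2 u),
\]
where $L = \Delta_{\mathcal{C}_k} - \tfrac{1}{2} x \cdot \na + |A|^2 + \tfrac{1}{2}$ is the Jacobi operator of the cylinder (self-adjoint in $L^2$ against the Gaussian weight $d\mu_W := e^{-|x|^2/4}\, d\mu$), and $N$ is a smooth at-least-quadratic nonlinearity in $(u, \na u, \na^2 u)$. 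Interior parabolic Schauder estimates upgrade the $C^1$ smallness to uniform $C^k_{\text{loc}}$ smallness for every $k$, and interpolating these with the hypothesized super-exponential $L^2_W$ decay promotes it to super-exponential decay of every weighted Sobolev norm $\|u(\cdot, t)\|_{H^K_W}$.

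Next, decompose spectrally. The operator $L$ on $L^2(\mathcal{C}_k, d\mu_W)$ has pure-point spectrum bounded above with eigenvalues $\lambda_\alpha$ accumulating only at $-\infty$, and an eigenbasis $\{\phi_\alpha\}$ of products of spherical harmonics on $\bb{S}^{n-k}_{\sqrt{2(n-k)}}$ and Hermite polynomials on $\RR^k$. Expanding $u(\cdot,t) = \sum_\alpha u_\alpha(t)\phi_\alpha$ and projecting the PDE yields the scalar ODE
\[
u_\alpha'(t) = \lambda_\alpha u_\alpha(t) + q_\alpha(t), \qquad q_\alpha(t) := \la N(u)(t), \phi_\alpha \ra_{L^2_W}.
\]
Since both $u_\alpha(T)$ and $q_\alpha(s)$ decay faster than any exponential, the boundary term $e^{\lambda_\alpha(t-T)} u_\alpha(T)$ vanishes as $T \to \infty$ regardless of the sign of $\lambda_\alpha$, yielding the representation
\[
u_\alpha(t) = -\int_t^\infty e^{\lambda_\alpha(t-s)} q_\alpha(s)\, ds.
\]

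The final step uses higher-order integration by parts against $L$: from $\lambda_\alpha^m q_\alpha = \la L^m N(u), \phi_\alpha \ra$ one obtains $|q_\alpha(s)| \lesssim |\lambda_\alpha|^{-m} \|u(s)\|_{H^{2m+2}_W}$ (using the quadratic structure of $N$ together with the bootstrapped $C^{2m+2}$ bounds), and for $m$ sufficiently large the spectral series $\sum_\alpha |\lambda_\alpha|^{-2m}(\lambda_\alpha + \beta)^{-2}$ converges. Inserting this into the representation, summing over all modes, and exploiting the $C^2$-smallness $\|u\|_{C^2} \leq \ep_0$ produces a contraction-type inequality on $\|u(\cdot,t)\|_{L^2_W}$ which, for $\ep_0$ chosen small enough, forces $u \equiv 0$.

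The main obstacle is the noncompactness of $\mathcal{C}_k$: the Hermite factors of $\phi_\alpha$ have unbounded polynomial growth in $\RR^k$, and the eigenvalues of $L$ carry polynomially growing multiplicities, so every nonlinear and spectral estimate has to delicately balance the Gaussian weight against eigenfunction growth and eigenvalue multiplicity; this is what requires the use of high Sobolev regularity to gain decay in $|\lambda_\alpha|^{-m}$ before the spectral sum closes. It is precisely this noncompactness that makes the global graphical hypothesis essential, as witnessed sharply by the counterexamples for local graphs constructed in the second half of the paper.
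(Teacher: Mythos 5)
The setup in your proposal matches the paper's: write $\partial_t u = Lu + \mathcal{Q}(u,\nabla u,\nabla^2 u)$, bootstrap $C^1$-smallness to $C^k$-smallness via interior estimates, interpolate to get super-exponential decay of higher weighted Sobolev norms, and decompose spectrally. But the final step — a Duhamel representation plus a contraction in $\|u(t)\|_{L^2_W}$ — is not how the paper closes the argument, and I believe that step has a genuine gap.

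The problematic modes are the (infinitely many) $\alpha$ with $\lambda_\alpha < 0$. Your representation
\[
u_\alpha(t) = -\int_t^\infty e^{\lambda_\alpha(t-s)}\,q_\alpha(s)\,ds
\]
is valid (the boundary term vanishes by super-exponential decay), but for $\lambda_\alpha<0$ the kernel $e^{\lambda_\alpha(t-s)}=e^{|\lambda_\alpha|(s-t)}$ grows exponentially in $s$. After the triangle inequality and $|q_\alpha(s)| \lesssim |\lambda_\alpha|^{-m}\|u(s)\|_{H^{2m+2}_W}$, you are left with
\[
|u_\alpha(t)| \lesssim |\lambda_\alpha|^{-m}\int_t^\infty e^{|\lambda_\alpha|(s-t)}\,\|u(s)\|_{H^{2m+2}_W}\,ds,
\]
and this cannot be bounded uniformly in $\alpha$ by $\|u(t)\|$: testing with $\|u(s)\|\sim e^{-s^2}$ (or any super-exponential profile) gives, after a Laplace-point computation, a factor on the order of $e^{|\lambda_\alpha|^2/4}$, which dwarfs any polynomial gain $|\lambda_\alpha|^{-m}$. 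The exact $u_\alpha(t)$ is of course bounded — the triangle inequality destroys essential cancellations — but this means the naive Duhamel bound is too lossy to sum into a contraction. Separately, the modes with $\lambda_\alpha=0$ receive no help from $|\lambda_\alpha|^{-m}$ at all, and in the paper these center-manifold modes are precisely where the essential work happens; your proposal does not address them. More fundamentally, a contraction only produces another \emph{upper} bound on $\|u(t)\|$, and an upper bound cannot contradict the super-exponential decay hypothesis, which is itself an upper bound.

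The paper instead groups the modes relative to a moving reference eigenvalue $\lambda$ into $x_{\lambda,+},x_{\lambda,0},x_{\lambda,-}$ (using $V^p$-norms, Proposition~\ref{p:u controls Quadratic term in V-norm} and Lemma~\ref{l:V-norm equiv to H-norm}), invokes the Filippas--Kohn/Merle--Zaag ODE Lemma~\ref{L:Merle-Zaag} together with the uniform spectral gap of Lemma~\ref{l:sepctral gap}, and obtains a dichotomy. If the $\lambda$-modes dominate, then $z_\lambda$ satisfies $|z_\lambda'|\le C z_\lambda$, which yields a \emph{lower} bound $z_\lambda(t)\gtrsim e^{-Ct}$ — this, not a contraction, is what contradicts super-exponential decay. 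Hence the strictly-below-$\lambda$ modes dominate for every eigenvalue $\lambda$, and sending $\lambda\to-\infty$ gives $u\equiv0$. To make your proposal rigorous you would need this kind of lower-bound mechanism on the dominant spectral block, not an upper-bound contraction via Duhamel.
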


This implies that if $u \not\equiv 0$, then there exists some \emph{exponential upper bound} for the convergence rate of $u$.

Note that in Theorem~\ref{t:main} we only assume smallness the $C^1$-norm. If, in addition, we assume the $C^1(\mathcal{C}_k)$-norm of $u$ converges to $0$, then we can obtain a \emph{uniform exponential upper bound} for the convergence rate in each small region $\Omega \subset \mathcal{C}_k$. This in particular implies the unique continuation property even with respect to \emph{local} convergence.
\begin{thm}\label{t:local}
     Let $\tilde{\varphi}:M\times[0,\infty)\to \RR^{n+1}$ be a solution to the rescaled mean curvature flow equation* \eqref{e:recaled mcf} and $M_t$ can be expressed as a normal graph of a function $u(p,t)$ over $\mathcal{C}_k=\mathbb{S}^{n-k}_{\sqrt{2(n-k)}}\times \RR^{k}$ such that $\lim\limits_{t\to \infty}\|u(\cdot,t)\|_{C^1(\mathcal{C}_k)}= 0$. If $u\not\equiv 0$, then there exists an $\alpha_0$ such that for every open set $\Omega\subset \mathcal{C}_k$
\begin{align*}
    \int_{\Omega}u^2(x,t) \,e^{-\frac{|x|^2}{4}}d\mu \geq C(\Omega,u)e^{-\alpha_0t}.
\end{align*}
where $d\mu$ is the standard measure on $\mathcal{C}_k$. 
\end{thm}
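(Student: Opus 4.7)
The plan is to upgrade the global unique‑continuation result of Theorem~\ref{t:main} to a uniform local exponential lower bound. The strategy has three steps: reduce to the small‑$C^1$ regime where Theorem~\ref{t:main} applies; extract a global exponential lower bound for the Gaussian $L^2$ norm by isolating a dominant eigenmode of the linearized operator; and then localize using analyticity of that eigenmode.

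For the reduction, since $\|u(\cdot, t)\|_{C^1(\mathcal{C}_k)} \to 0$, I first pick $T_0$ so that $\|u(\cdot, t)\|_{C^1} \leq \ep_0$ for $t \geq T_0$; Theorem~\ref{t:main} then applies to $u$ on $[T_0, \infty)$. Because $u$ satisfies a quasilinear parabolic equation whose coefficients are $C^1$-small perturbations of the linearization of \eqref{e:recaled mcf} around $\mathcal{C}_k$, forward and backward uniqueness for such equations force $u(\cdot, t) \not\equiv 0$ at every $t$, so the restricted flow is nontrivial. Theorem~\ref{t:main} then produces some $\alpha_1 \in \mathbb{N}$ such that $e^{\alpha_1 t} \int_{\mathcal{C}_k} u^2 e^{-|x|^2/4}\, d\mu$ does not tend to zero as $t \to \infty$.

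For the global lower bound, I would write the equation as $\partial_t u = L u + \mathcal{N}(u)$, where $L = \Delta - \tfrac{1}{2} x \cdot \nabla + |A|^2 + \tfrac{1}{2}$ is self-adjoint on $L^2(\mathcal{C}_k, e^{-|x|^2/4}\, d\mu)$ and $\mathcal{N}(u) = O(\|u\|_{C^1}) \cdot u$. The spectrum of $L$ on $\mathcal{C}_k$ is discrete, with real-analytic eigenfunctions $\phi_i$ given by tensor products of spherical harmonics on $\mathbb{S}^{n-k}_{\sqrt{2(n-k)}}$ and Hermite polynomials on $\RR^k$. Expanding $u(\cdot, t) = \sum_i c_i(t) \phi_i$, the coefficients satisfy the ODE $\dot c_i = \lambda_i c_i + \langle \mathcal{N}(u), \phi_i \rangle$, and a Merle--Zaag type analysis, combined with the non‑super‑exponential decay from the previous step, shows that a single mode $\phi_{i_0}$ with $\lambda_{i_0} < 0$ eventually dominates: $|c_{i_0}(t)| \geq c\, e^{\lambda_{i_0} t}$ and $\|u(\cdot, t) - c_{i_0}(t) \phi_{i_0}\|_{L^2_{e^{-|x|^2/4}}} = o(|c_{i_0}(t)|)$. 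Setting $\alpha_0 := -2 \lambda_{i_0}$ yields
\[
\int_{\mathcal{C}_k} u^2(\cdot, t)\, e^{-|x|^2/4}\, d\mu \geq c\, e^{-\alpha_0 t}, \qquad t \geq T_0.
\]

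The localization is then essentially automatic. Since $\phi_{i_0}$ is real-analytic and not identically zero on the connected manifold $\mathcal{C}_k$, its zero set has empty interior, so $c(\Omega) := \int_\Omega \phi_{i_0}^2\, e^{-|x|^2/4}\, d\mu > 0$ for every open $\Omega$. Splitting $u = c_{i_0} \phi_{i_0} + r$ with $\|r\|_{L^2_{e^{-|x|^2/4}}} = o(|c_{i_0}|)$, the triangle inequality gives $\int_\Omega u^2 e^{-|x|^2/4}\, d\mu \geq \tfrac{1}{2} c(\Omega) |c_{i_0}(t)|^2 \geq C(\Omega, u)\, e^{-\alpha_0 t}$ for $t$ sufficiently large, which is the desired local bound. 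The main obstacle I anticipate is the isolation of a dominant mode on the noncompact cylinder: the Hermite modes on $\RR^k$ contribute eigenvalues that accumulate, and the nonlinear error $\mathcal{N}(u)$ can a priori couple distinct modes, so the Merle--Zaag-style analysis needed to rule out cancellation and to upgrade the merely sequential information from Theorem~\ref{t:main} into a uniform-in-time ODE lower bound for $|c_{i_0}(t)|$ is the chief technical task.
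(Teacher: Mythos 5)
Your plan follows the same overall route as the paper: apply Theorem~\ref{t:main} (contrapositive) to pin down a critical eigenvalue $\lambda_m$, run a Merle--Zaag type analysis to show the corresponding low mode dominates with an exponential-in-time lower bound, and then localize using unique continuation for eigenfunctions. Two points need correction, one of which is a genuine gap.

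First, the localization as you wrote it does not quite work. You assert that a \emph{single} eigenfunction $\phi_{i_0}$ eventually dominates and then use real-analyticity of $\phi_{i_0}$ to conclude $\int_\Omega \phi_{i_0}^2\,e^{-|x|^2/4}d\mu>0$. On the cylinder $\mathcal{C}_k$ the spectrum of $L$ has nontrivial multiplicity (spherical harmonics degenerate, and coincidences such as $\lambda_{(i,j)}=1-\tfrac{j}{2}-\tfrac{i(i-1+n-k)}{2(n-k)}$ recur for distinct $(i,j)$), so what dominates is the projection $u_0(\cdot,t)$ of $u$ onto a finite-dimensional \emph{eigenspace}, a $t$-dependent linear combination of eigenfunctions. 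Analyticity of each $\phi_i$ separately does not yield a $t$-uniform local bound. What you need, and what the paper proves, is that $\|\cdot\|_{L^2(\Omega)}$ is a norm on the eigenspace (this is exactly where unique continuation for eigenfunctions enters: no nonzero element of the eigenspace can vanish on an open set), hence by finite-dimensionality it is equivalent to $\|\cdot\|_{L^2(\mathcal{C}_k)}$, giving $\|u_0(\cdot,t)\|_{L^2(\Omega)}\geq\delta(\Omega,k,m,n)\,\|u_0(\cdot,t)\|_{L^2(\mathcal{C}_k)}$ with $\delta$ independent of $t$. With this uniform constant, your ``$\int_\Omega u^2 \geq \tfrac12 c(\Omega)|c_{i_0}|^2$'' becomes the correct $\int_\Omega u^2 e^{-|x|^2/4}d\mu\geq (\delta-2\ep)\|u_0\|^2_{L^2(\mathcal{C}_k)}$ for large $t$.

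Second, the step you flag as the ``chief technical task'' --- upgrading the merely sequential non-decay from Theorem~\ref{t:main} into a uniform-in-time ODE lower bound --- is indeed the substance of the proof. The paper resolves it with a two-case dichotomy coming from Lemma~\ref{L:Merle-Zaag} applied at $\lambda_m$: either (Case 1) the $\lambda_m$-eigenspace dominates, $|u|_{H^{2p}}\leq C|z_{\lambda_m}|$ and $|z_{\lambda_m}(t)|\geq Ce^{-Ct}$, which yields the theorem after localizing; or (Case 2) the negative modes dominate and the $H^{2p}$-norm decays strictly faster than $e^{\lambda_{m+1}t}$. Case 2 is then shown impossible by iterating the ODE lemma with a shift $\lambda_{m+1}+\ep$ and using the spectral gap Lemma~\ref{l:sepctral gap} (taking $\ep\leq c_0/10$) to contradict the choice of $\lambda_{m+1}$ via \eqref{e:lowbound}. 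So the spectral gap is not incidental: it is what makes the iteration close and prevents the accumulation of Hermite eigenvalues from spoiling the dominant-mode argument, which is precisely the obstacle you anticipated.

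Finally, the claimed exponent $\alpha_0=-2\lambda_{i_0}$ is not what the argument produces; the lower bound for $z_{\lambda_m}$ comes from the differential inequality \eqref{e:growthcon}, $|z_{\lambda_m}'|\leq C(n,\lambda_m,\ep)z_{\lambda_m}$, so $\alpha_0$ depends on $\lambda_m$ but is not simply $-2\lambda_m$. This is harmless for the statement, which asserts only the existence of some $\alpha_0$.
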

\begin{Rem}
One can in fact deduce the following unique continuation result: Given $\|u(\cdot,t)\|_{C^1(\mathcal{C}_k)} \to 0$, if there exists an open subset $\Omega\subset \mathcal{C}_k$ such that for any $\alpha>0$ there is a constant $C_{\alpha} < \infty$ satisfying
     \begin{align*}
    \int_{\Omega}u^2(x,t)\,e^{-\frac{|x|^2}{4}} d\mu \leq C_\alpha e^{-\alpha t},
\end{align*}
then we have $u\equiv 0$. 
\end{Rem}

Our approach is different from the frequency function techniques used in \cites{MS23,BHL24,DH25} or the Carleman-type estimate methods of \cites{W14,W16,DH252}. In this paper, we employ a spectral decomposition based on eigenfunctions and use the ODE Lemma due to Filippas-Kohn \cite{FK92} and Merle-Zaag \cite{MZ98} as the main analytical tool. This framework exploits the spectral gap of the linearized operator $L$ and product structure on cylinder, both of which reflect its underlying geometric features. For more references, see for  \cites{ADS19,ADS20,SX22,CHH22,CM22,CCS23, CCMS24,CHH24, ADS25,DZ22,SWX25,CDDHS25}. Moreover, our method applies to a broad class of nonlinear equations.

\subsection{Failure of unique continuation for local graphs}

Our main rigidity theorems, Theorem~\ref{t:local}, require only local super-exponential convergence in order to conclude rigidity, provided that the flow is assumed to be a \emph{global graph} over the entire cylinder.  
A natural question is whether this global graphical assumption can be weakened to \emph{local graphs}.  
In this section, as our second main result, we show that this is not the case.  

We construct explicit examples of rescaled mean curvature flows that are defined only as local graphs, yet converge to cylinders at arbitrarily fast super-exponential rate with their domains of definition expanding arbitrarily fast. 
The graphical region can be chosen arbitrarily large, and expanding arbitrarily rapid, while the flow remains nontrivial.  
This demonstrates that unique continuation at cylindrical singularities is \emph{not} a local property.

Unlike the harmonic case, where strong unique continuation holds under very mild assumptions, the situation is dramatically different for parabolic equations.  
Even for the linear heat equation, striking counterexamples arise once one allows uncontrolled growth at spatial infinity.  
This classical phenomenon, discovered by Tikhonov~\cite{T35}, shows that there exist nontrivial smooth solutions $ u $ of the heat equation on a time half-line that vanish to infinite order at the initial time:
\[
u(x,0) \equiv 0, \qquad \partial_t^k u(x,0) \equiv 0 \quad \text{for all } k \ge 0,
\]
yet \( u \not\equiv 0 \) for \( t \neq 0 \).

One explicit example is obtained by taking a time-flat germ
\[
\varphi(t) = e^{-1/t^2}, \qquad \varphi(0) = 0,
\]
and defining
\begin{equation}\label{e:Tik example}
    u(x,t) = \sum_{k=0}^{\infty} \frac{\varphi^{(k)}(t)\, x^{2k}}{(2k)!}.
\end{equation}
Here all time derivatives vanish at \( t = 0 \), so that the vanishing order at the origin is infinite, yet the resulting solution is nontrivial for \( t>0 \).

A natural and more challenging question is to study the unique continuation problem for
nonlinear parabolic equations. The construction by Tikhonov provides the prototype for our nonlinear generalization in Theorem~\ref{t:Tikhonov}.
In the following theorem, we establish such a result for the \emph{local solutions} to a broad class of nonlinear backward parabolic equations
whose linearized operator is the Laplacian and whose quadratic term is analytic. The construction also works for forward equation.

\begin{Thm}\label{t:Tikhonov}
Let $\mathcal{Q}$ be a real-analytic function defined in a neighborhood of \( (0,0,0) \in \mathbb{R} \times \mathbb{R}^{n} \times \mathbb{R}^{n\times n} \) satisfying  
\[
\mathcal{Q}(0,0,0) = 0, \qquad \nabla \mathcal{Q}(0,0,0) = 0.
\]
Then for any parameters \( \vec{\alpha}=(\alpha_1,\alpha_2,\alpha_3) \in \mathbb{R}^3, \beta \in \mathbb{R} \), $N\geq 1$, there exist a smooth one-parameter family of functions  
\[
\{ u_s : U_N \subset \mathbb{R}^{n} \times [0,1) \to \mathbb{R} \}_{s \in [-1,1]},
\]
such that each \( u_s \) is nontrivial, \( u_s \not\equiv 0 \), and satisfies the equation
\begin{align}\label{e:intro_vars}
    \begin{cases}
        (\partial_t + \Delta) u_s
        = s \cdot t^{\beta} \cdot
       \mathcal{Q}\!\left( t^{\alpha_1} u_s,\; t^{\alpha_2} \nabla u_s,\; t^{\alpha_3} D^2 u_s \right), \\[4pt]
        u_s(x,0) = 0,
    \end{cases}
\end{align}
on the region 
\begin{align}\label{e:domain}
     U_N  \supseteq \bigg\{ (x,t) \in \mathbb{R}^{n} \times (0,1)\;\bigg|\;
      |x| \le e^{\circ N}\!\left( \tfrac{1}{t^2} \right) \bigg\} ,
\end{align}
with the decay rate estimate
\begin{align}\label{e:superexp_decay}
    \limsup_{t\to 0}
    |u_s(x,t)| \cdot e^{\circ N}\!\left( \tfrac{1}{t^2} \right) =0
\end{align}
where the iterated exponential \( e^{\circ N} \) is defined recursively by
\begin{equation}\label{e:iterated exp}
\begin{split}
    e^{\circ 0}(x) &= x, \\
    e^{\circ N}(x) &= e^{\, e^{\circ (N-1)}(x)}.
\end{split}
\end{equation}
\end{Thm}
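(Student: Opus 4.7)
The plan is to extend Tikhonov's classical construction to the nonlinear setting via a formal spatial power-series ansatz combined with a carefully chosen time-flat germ. I would look for $u_s$ of the radial form
\[
u_s(x,t) = \sum_{k=0}^\infty a_k^{(s)}(t)\,\frac{|x|^{2k}}{d_{k,n}},
\]
where the $d_{k,n}$ are normalizing constants chosen so that $\Delta(|x|^{2k}/d_{k,n}) = |x|^{2k-2}/d_{k-1,n}$. Substituting into $(\partial_t+\Delta)u_s = s\,t^{\beta}\,\Q(\cdots)$, expanding the analytic $\Q$ around $0$, and matching coefficients of $|x|^{2k}$ yields a nearly triangular system
\[
a_k^{(s)\prime}(t) + a_{k+1}^{(s)}(t) = s\,t^{\beta}\,\tilde G_k\bigl(t;\,a_0^{(s)},\dots,a_{k+1}^{(s)}\bigr),
\]
where $\tilde G_k$ is polynomial in the $a_j^{(s)}$, extracted from the Taylor expansion of $\Q$. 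Since $\Q(0,0,0)=\n\Q(0,0,0)=0$, each $\tilde G_k$ is at least quadratic, so its apparent dependence on the ``overshooting'' index $a_{k+1}^{(s)}$ (arising, e.g., from $(D^2u)^2$ terms) enters only through products with the exponentially small $a_j^{(s)}$, $j\le k$, and is absorbed via implicit-function perturbation into an explicit recursion expressing $a_{k+1}^{(s)}$ in terms of $a_0^{(s)},\ldots,a_k^{(s)}$ and their derivatives.

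As free data I would prescribe $a_0^{(s)}(t) := \varphi(t)$, independent of $s$, a time-flat germ with $\varphi^{(k)}(0) = 0$ for all $k\ge 0$, chosen to decay at $t=0$ at a rate matched to the scales in \eqref{e:domain} and \eqref{e:superexp_decay}: for instance $\varphi(t) := \exp\bigl(-e^{\circ(N+N_0)}(1/t^2)\bigr)$ with $N_0$ sufficiently large depending on $n,\vec\alpha,\beta$. An induction on $k$ then shows each $a_k^{(s)}$ is smooth on $(0,1)$ and flat at $t=0$, so $u_s(\cdot,0)\equiv 0$ with all time-derivatives vanishing there. Because the nonlinearity enters analytically through $s\cdot t^\beta\,\Q$, the $a_k^{(s)}$ depend analytically on $s$, producing the one-parameter family; the full range $|s|\le 1$ is obtained by absorbing a smallness factor into $\varphi$ if needed. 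Non-triviality $u_s\not\equiv 0$ is immediate from $a_0^{(s)}=\varphi\not\equiv 0$ on $(0,1)$.

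The technical core --- and main obstacle --- is quantitative closure: proving the formal series converges on $U_N$ with the prescribed super-exponential decay. For the linear skeleton ($s=0$), classical Tikhonov analysis yields estimates of the form $|a_k^{(0)}(t)|\le C^k(k!)^\rho\,\varphi(t)^{1-\delta}$, and dividing by $d_{k,n}\sim (2k)!$ produces a series whose radius of convergence in $|x|$ comfortably exceeds $e^{\circ N}(1/t^2)$ while the tail is smaller than $1/e^{\circ N}(1/t^2)$. For the nonlinear contribution I would run a majorant argument in the spirit of Cauchy--Kowalevski: since $\tilde G_k$ is quadratic-or-higher in the $a_j^{(s)}$, each nonlinear step acquires an extra small factor of $\varphi$, so the cumulative correction remains subordinate to the linear recursion provided $N_0$ is large enough. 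The delicate point is reconciling the factorial growth of $\varphi^{(k)}$ --- essential for Tikhonov-type non-uniqueness --- with the strong super-exponential decay required on an iterated-exponential domain. This requires sharp bounds on $|\varphi^{(k)}(t)|$ (obtained via combinatorial derivative formulas for $\exp(-F(1/t))$) and verifying that the $(2k)!$ in the denominator absorbs this growth with enough room to absorb the nonlinear feedback as well; the quadratic vanishing of $\Q$ together with the weights $t^\beta,\,t^{\alpha_i}$ is what ultimately closes the estimate and determines the admissible domain $U_N$.
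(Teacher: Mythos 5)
Your proposal diverges from the paper's proof at a structural level: the paper expands $u_s$ in \emph{both} $x$ and the parameter $s$, writing $u_s=\sum_{i,j} V^{i,j}(t)\,x^i s^j$, with $V^{i,0}$ chosen to be the linear Tikhonov solution and $V^{0,j}=V^{1,j}\equiv 0$ for $j\ge 1$. The $s$-grading is not cosmetic: because the nonlinearity carries a factor of $s$, the recursion for $V^{i,j}$ involves only $V^{*,j'}$ with $j'<j$, so the system is strictly triangular in $j$ and no implicit step is ever needed. More importantly, the index $j$ counts precisely how many ``applications'' of $\mathcal{Q}$ have occurred, so each $V^{i,j}$ comes with $j+1$ factors of the base germ $\varphi$ --- this is what drives the Catalan-number bookkeeping in Lemmas \ref{l:general case W estimate}--\ref{l:example general decomp} and produces the decisive decay $e^{-(j+1)\varepsilon/t^2}$ that beats the combinatorial growth. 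Your radial ansatz without the $s$-grading collapses all orders of nonlinearity into a single coefficient $a_k(t)$, so there is no clean accounting of how many $\varphi$-factors a given term contains; the statement ``each nonlinear step acquires an extra small factor of $\varphi$'' is exactly the heart of the matter, and you have left it at a heuristic level. You would need to reconstruct the grading implicitly inside a majorant argument, which is at least as delicate as what the paper does explicitly and is the part of the argument you defer. Additionally, the implicit-function step you invoke to absorb the $a_{k+1}$ feedback from the $D^2u$ coefficient is avoidable with the $s$-expansion, and if retained requires tracking a $k$- and $t$-dependent resolvent through the entire factorial-growth estimate.

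Two smaller issues: first, your claim that the weights $t^\beta$, $t^{\alpha_i}$ ``ultimately close the estimate'' has the sign backwards --- for generic $\beta,\alpha_i$ these are negative powers of $t$ that \emph{worsen} the recursion as $t\to 0$, and the paper must dominate them by the $\varphi^{1-\varepsilon}$ factors from the base germ (via $\Theta\le\theta(i_0+i_1+i_2)$ and the surplus in $(C/t)^{\theta L}$ being killed by $e^{-\varepsilon(L-J+1)/t^2}$). Second, for $n>1$ your radial ansatz $\sum a_k(t)|x|^{2k}$ does not generally close under an arbitrary analytic $\mathcal{Q}(u,\nabla u,D^2 u)$, since $\mathcal{Q}$ evaluated at a radial $u$ need not be radial; the paper's computations are effectively one-dimensional, and one should either state this restriction or impose rotational invariance on $\mathcal{Q}$. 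The choice of iterated-exponential germ is the same idea as the paper's $f_N=\exp(-e^{\circ(N-1)}(1/t^2))$ and is correct; the genuine gap is the quantitative convergence of the series, which is precisely where the paper spends almost all of its effort and which your proposal does not carry out.
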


\begin{Rem}
    By taking $N$ large, we can construct solutions that decay at arbitrarily fast rates on domains expanding at arbitrarily fast rates. Moreover, the same argument applies if we replace $t$ by any smooth function of $t$. Note that to construct such solution defined globally is impossible due to the uniqueness result in \cite{DS23}. Hence our result is sharp.
\end{Rem}

In particular, for the mean curvature flow over the plane,
the corresponding quadratic term
\[
\mathcal{Q}(u, \nabla u, D^2 u) = -\frac{D^2 u(\nabla u, \nabla u)}{1 + |\nabla u|^2}
\]
satisfies the structural assumptions of Theorem~\ref{t:Tikhonov}.
It is worth noting that the solution domain corresponds to a space–time region
that can expand arbitrarily fast in time under the rescaled mean curvature flow.
This observation further indicates that unique continuation
is not an inherently local property in this setting.

Consequently, for the rescaled mean curvature flow, there exist incomplete graphical solutions that converge to zero at arbitrarily fast super-exponential rates with the domain expanding arbitrarily fast.
More precisely, we have the following.

\begin{Cor}\label{c:RMCF}
For any $N \ge 1$, there exists a nontrivial solution $M_t$ to the 
rescaled mean curvature flow \eqref{e:recaled mcf} such that each $M_t$ can be expressed as a normal graph of a function $u(x,t)$ over $\mathcal{C}_k=\mathbb{S}^{n-k}_{\sqrt{2(n-k)}}\times \RR^{k}$ with $k\ge 1$  defined on
\begin{align*}
    \big\{ (x, \tau) \in \mathcal{C}_k \times \mathbb{R}
    \;\big|\;
    |x| \le e^{\circ N}(\tau),\;
    0 \le \tau < +\infty \big\},
\end{align*}
where the iterated exponential $e^{\circ N}$ is defined as \eqref{e:iterated exp}. Moreover, the solution satisfies the super-exponential decay estimate
\begin{align}\label{e:rate}
    \limsup_{\tau \to \infty} |u(x, \tau)| \cdot {e^{\circ N}(\tau)} = 0.
\end{align}
\end{Cor}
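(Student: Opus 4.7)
The plan is to deduce Corollary~\ref{c:RMCF} as a direct application of Theorem~\ref{t:Tikhonov} after reducing the graphical rescaled mean curvature flow over $\mathcal{C}_k$ to the backward-heat form covered by that theorem. Parametrize nearby hypersurfaces as normal graphs of $u(\omega,y,\tau)$ over $\mathcal{C}_k=\mathbb{S}^{n-k}_{\sqrt{2(n-k)}}\times\RR^{k}$, where $\omega\in\mathbb{S}^{n-k}$ and $y\in\RR^{k}$; then \eqref{e:recaled mcf} becomes
\[
\partial_\tau u \;=\; \Delta_{\mathcal{C}_k} u - \tfrac12\, y\cdot\nabla_y u + u + Q(u,\nabla u, D^2 u),
\]
where the $+u$ potential term comes from $|A|^2+\tfrac12 = 1$ on the round cylinder of radius $\sqrt{2(n-k)}$, and $Q$ is real-analytic near the origin with $Q(0,0,0)=\nabla Q(0,0,0)=0$. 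Since both the equation and the domain are rotationally invariant in the spherical factor, restrict to $\omega$-independent solutions $u=u(y,\tau)$; the equation then reduces to one on $\RR^{k}$ of the same form with $\Delta_{\mathcal{C}_k}$ replaced by $\Delta_{\RR^{k}}$ and $Q$ real-analytic in $(u,\nabla_y u, D_y^2 u)$.

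Next absorb the potential term via $u = e^\tau v$ and apply the classical Huisken rescaling $t = e^{-\tau}$, $x = t^{1/2}y$; writing $w(x,t)=v(y,\tau)$ and using $\partial_\tau v = -t\,\partial_t w - \tfrac12\, x\cdot\nabla_x w$, $\nabla_y v = t^{1/2}\nabla_x w$, $\Delta_y v = t\,\Delta_x w$, a routine computation yields
\[
(\partial_t + \Delta_x)\, w \;=\; -\,\Q\bigl(t^{-1} w,\; t^{-1/2}\nabla_x w,\; D_x^2 w\bigr),
\]
which is precisely equation \eqref{e:intro_vars} with $\vec{\alpha}=(-1,-\tfrac12,0)$, $\beta=0$, $s=-1$, and $\Q=Q$.

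Applying Theorem~\ref{t:Tikhonov} with the same $N$ then produces a nontrivial $w$ defined on a region containing $\{|x|\le e^{\circ N}(1/t^2)\}$ for $t\in(0,1)$, satisfying $|w(x,t)|\cdot e^{\circ N}(1/t^2)\to 0$ as $t\to 0^+$. Define $u(y,\tau):= e^\tau\, w(e^{-\tau/2}y,\,e^{-\tau})$; this is the graph function of the required rescaled MCF $M_\tau$. In $(y,\tau)$-coordinates the Tikhonov domain becomes $\{|y|\le e^{\tau/2}\cdot e^{\circ N}(e^{2\tau})\}$, which contains $\{|y|\le e^{\circ N}(\tau)\}$ for all $\tau\ge 0$ (the endpoint $\tau=0$ is handled by the harmless time shift $t=\tfrac12 e^{-\tau}$). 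Finally, $|u(y,\tau)|\cdot e^{\circ N}(\tau) \le e^\tau\cdot e^{\circ N}(\tau)\cdot |w(x,t)|$, which tends to $0$ as $\tau\to\infty$ because $e^\tau\cdot e^{\circ N}(\tau)$ is dwarfed by $e^{\circ N}(e^{2\tau})$ for $N\ge 1$, yielding \eqref{e:rate}.

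\emph{Main obstacle.} The substantive work is encapsulated in Theorem~\ref{t:Tikhonov} itself; once it is established, Corollary~\ref{c:RMCF} follows from the change of variables above. The technical care needed is (i) verifying that the MCF nonlinearity $Q$ after the rotationally symmetric reduction is genuinely real-analytic with $Q(0,0,0)=\nabla Q(0,0,0)=0$ so that the hypotheses of Theorem~\ref{t:Tikhonov} are met, and (ii) bookkeeping the iterated exponentials through the Huisken rescaling so that a single choice of $N$ in the Tikhonov theorem yields both the domain $|y|\le e^{\circ N}(\tau)$ and the decay rate \eqref{e:rate} required by Corollary~\ref{c:RMCF}.
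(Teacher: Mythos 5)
Your proposal reaches the same conclusion by the same overall strategy as the paper---reduce the rotationally symmetric rescaled-MCF graph equation over $\mathcal{C}_k$ to the general nonlinear backward-heat form of Theorem~\ref{t:Tikhonov} via a time-dependent change of variables---but the actual change of variables you use is genuinely different, and it is arguably the more careful one. The paper passes to the MCF graph $u_{\mathrm{MCF}}(Y,t)=\sqrt{-t}\,v\bigl(Y/\sqrt{-t},-\ln(-t)\bigr)$ directly and asserts $\partial_t u_{\mathrm{MCF}}=\partial_x^2 u_{\mathrm{MCF}}+\tfrac{1}{\sqrt{-t}}\mathcal{Q}\bigl(\tfrac{1}{\sqrt{-t}}u_{\mathrm{MCF}},\partial_x u_{\mathrm{MCF}},\sqrt{-t}\,\partial_x^2 u_{\mathrm{MCF}}\bigr)$, reading off $(\vec\alpha,\beta)=(-\tfrac12,0,\tfrac12;-\tfrac12)$; you instead first conjugate by $e^\tau$ to absorb the zeroth-order $+u$ term of $L$ (which on $\mathcal C_k$ is $|A|^2+\tfrac12=1$) and then apply the Huisken scaling to $v$, landing on $(\partial_t+\Delta)w=-\mathcal{Q}\bigl(t^{-1}w,\;t^{-1/2}\nabla w,\;D^2w\bigr)$, i.e.\ $(\vec\alpha,\beta)=(-1,-\tfrac12,0;\,0)$ with $s=-1$. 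Both are parameter choices admissible for Theorem~\ref{t:Tikhonov}, but the direct computation shows that the paper's Huisken-rescaled equation omits a linear term $\tfrac{1}{2(-t)}u_{\mathrm{MCF}}$ coming from $|A_{\Sigma_t}|^2=\tfrac{1}{2(-t)}$ of the shrinking cylinder (one sees this concretely from $\rho=\sqrt{-2t}+u$ in the rotationally symmetric form $\partial_t\rho=\rho_{yy}/(1+\rho_y^2)-1/\rho$); such a term has nonvanishing gradient at the origin and is not covered by the hypotheses on $\mathcal Q$. Your pre-conjugation $u_{\mathrm{graph}}=e^\tau v$ is precisely what removes this obstruction, and your subsequent bookkeeping of the iterated exponentials and of the super-exponential decay (the factor $e^\tau\,e^{\circ N}(\tau)$ being dominated by $e^{\circ N}(e^{2\tau})$) is correct. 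One small point to make explicit is that the graph function $u_{\mathrm{graph}}$ must be kept small uniformly on $[0,\infty)$, not merely decaying at infinity, for the normal graph over $\mathcal{C}_k$ to make sense; this is handled, as in the paper, by scaling down the base Tikhonov datum $\varphi$ in the power-series construction, and by the time shift you already mention.
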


Again, such local graphical constructions cannot be extended to
global solutions due to the uniqueness result as in \cite{DS23}.

By considering local graphs on the shrinking cylindrical flow $\RR \times \mathbb{S}^1_{\sqrt{-2t}}$, we can construct infinite-dimensional families of mean curvature flow, though incomplete, developing singularity at time $0$ whose singular set is $\RR$. See also \cite{W16} for an incomplete solution developing singularity whose singular set is a ray $(0,\infty)$.

\begin{Cor}\label{c:prod}
    There exist infinite-dimensional families of non-product mean curvature flows developing singularity at time $0$ with singular set $\RR$. 
\end{Cor}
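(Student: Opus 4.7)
The plan is to apply Corollary~\ref{c:RMCF} in the case $(n,k)=(2,1)$, so that $\mathcal{C}_1 = \mathbb{S}^1_{\sqrt{2}} \times \mathbb{R}$, and then convert the resulting rescaled mean curvature flow back to an ordinary mean curvature flow via parabolic rescaling. The infinite-dimensional freedom will come from the Tikhonov-type germ appearing in the construction behind Theorem~\ref{t:Tikhonov}, which is parameterized by an infinite-dimensional space of flat functions as in \eqref{e:Tik example}.

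First, for any $N\ge 1$, I would invoke Corollary~\ref{c:RMCF} with $(n,k)=(2,1)$ to obtain a nontrivial normal graph $u(x,\tau)$ over $\mathcal{C}_1$ solving the rescaled mean curvature flow equation on the expanding region $\{|x|\le e^{\circ N}(\tau)\}$, satisfying $|u(x,\tau)|\cdot e^{\circ N}(\tau)\to 0$ as $\tau\to\infty$. Under the standard identification $t = -e^{-\tau}$ and $\varphi(\cdot,t) = \sqrt{-t}\,\tilde{\varphi}(\cdot,\tau)$, this produces a mean curvature flow $\{M_t\}_{t\in(-1,0)}$ realized as a graph over the shrinking cylinder $\mathbb{R}\times\mathbb{S}^1_{\sqrt{-2t}}$.

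Next, I would verify the singular set. The unperturbed shrinking cylinder $\mathbb{R}\times\mathbb{S}^1_{\sqrt{-2t}}$ collapses onto the axis $\mathbb{R}\times\{0\}$ at $t=0$, so the singular set is exactly $\mathbb{R}$. Because $u$ vanishes to infinite order as $\tau\to\infty$ (which in MCF time $t$ translates to a rate far faster than $\sqrt{-t}$), the perturbation $\sqrt{-t}\,u$ is negligible compared with the collapse of the underlying cylinder, so $M_t$ still contracts onto the same axis. Moreover, since $u$ remains a small $C^1$ graph throughout the graphical region, no additional singularities are introduced, so the singular set of $M_t$ at $t=0$ is precisely $\mathbb{R}$.

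Finally, for the infinite-dimensional family and the non-product property, I would exploit the freedom built into the Tikhonov-type construction of Theorem~\ref{t:Tikhonov}: the starting flat germ (together with the coefficients in the generalized series \eqref{e:Tik example}) varies in an infinite-dimensional space of smooth functions vanishing to infinite order at $t=0$, and by choosing germs for which the resulting $u$ depends genuinely on the $\mathbb{R}$-factor of $\mathcal{C}_1$, the associated MCFs fail to split as $\mathbb{R}\times\{\text{shrinking curve}\}$ and are pairwise distinct. The main obstacle I would expect is the careful bookkeeping needed to confirm that the singular set is exactly $\mathbb{R}$—neither smaller (if the perturbation smoothed out some axis points) nor larger (if it created new singular behavior in the graphical region)—and to verify that distinct choices of germ yield genuinely different MCFs; both reduce to quantitative comparisons between the super-exponential decay of $u$ and the parabolic rescaling factor $\sqrt{-t}$, together with a direct inspection of the series \eqref{e:Tik example}.
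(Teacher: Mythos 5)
Your approach is essentially the same as the paper's. The paper proves Corollary~\ref{c:prod} by taking the graph function independent of the $\mathbb{S}^1$-factor, reducing the rescaled MCF equation to a one-dimensional PDE over $\RR$, converting back to MCF time via $u(y,t)=\sqrt{-t}\,v(y/\sqrt{-t},-\ln(-t))$, and then citing Theorem~\ref{t:Tikhonov} with $\beta=\alpha_1=-\tfrac12$, $\alpha_2=0$, $\alpha_3=\tfrac12$ — exactly the same reduction that underlies Corollary~\ref{c:RMCF}, which you invoke instead of repeating it in-line. The one thing you should make explicit (the paper does) is that the graph over $\mathcal{C}_1$ is rotationally symmetric, i.e.\ depends only on the axial coordinate, which is what collapses the problem to Theorem~\ref{t:Tikhonov} and is also what makes the non-product claim meaningful. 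Your additional bookkeeping — checking that the graphical domain $|y|\le \sqrt{-t}\,e^{\circ N}(-\ln(-t))$ sweeps out all of $\RR$ as $t\to 0^-$, that the super-exponential decay of $u$ makes the perturbed flow collapse to the same axis as the round cylinder, and that the infinite-dimensional family comes from varying the flat germ in the Tikhonov ansatz — is all correct and fills in details the paper leaves implicit rather than taking a different route.
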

\begin{Rem}
 For any $N\geq 1$, we can construct a corresponding solution to mean curvature flow, which is a graph over the region on shrinking cylinders $ \{
(x,t)\,\big|\,\, |x|\leq e^{\circ N}(\frac{1}{t^2}),\,\,-1\leq t< 0\} \subset \mathcal{C}_1 \times \RR_{\geq -1}$ and the graph function has super-exponential convergence rate \eqref{e:rate}. By considering the product with more $\RR$ factors, it would provide more examples of mean curvature flows developing singular set $\RR^m$, where the mean curvature flow is not of the form $\RR^m\times \Sigma_t$.
\end{Rem}

\medskip

The difficulty in constructing these solutions arises not only from the nonlinear nature of the equation,
but also from the ill-posedness of the backward parabolic problem.
To overcome this, we introduce a new method combining
\emph{power series expansions} with \emph{combinatorial analysis} to construct the solution explicitly.
The essential step is to prove convergence of the resulting formal series.

Since $\mathcal{Q}$ is analytic, it admits a convergent power series expansion in the variables 
$u$, $\nabla u$, and $D^2 u$. Substituting this expansion into \eqref{e:intro_vars}
yields an iterative system of equations for the Taylor coefficients of $u$.
By writing out the recurrence relations among these coefficients,
the problem reduces to a \emph{counting argument} stemming from the quadratic nature of $\mathcal{Q}$:
we show that the combinatorial growth of coefficients is dominated
by the super-exponential decay contributed by the base term $u_0$,
which we take to be a Tikhonov-type solution of the heat equation.
This combinatorial control guarantees convergence of the series and hence
the existence of the desired smooth solution.

\medskip

\textbf{Acknowledgments. }
The authors are very grateful to Tobias Colding for his invaluable support and encouragement throughout this project. The authors are further indebted to Wenshuai Jiang, Natasa Sesum and Lu Wang for their kind encouragement and interest in this work. The authors would like to thank Zhihan Wang for his generous help and insightful suggestions regarding the construction of counterexamples modeled on Tikhonov’s example and Ao Sun, Jingxuan Zhang, Jingze Zhu for several helpful discussions. The authors also thank Otis Chodosh, Tang-Kai Lee, Ao Sun, Zhihan Wang and Jingxuan Zhang for several useful comments on an earlier version of this paper. During the project, Yiqi Huang was supported by the Croucher Scholarship and NSF Grant DMS 2405393, and Xinrui Zhao was supported by NSF Grant DMS 1812142, NSF Grant DMS 1811267 and NSF Grant DMS 2104349.

\section{Rescaled mean curvature flow over cylinders}

In this section, we will review and prove some estimates for the quadratic term $\mathcal{Q}$ for rescaled mean curvature flows equation of global graphs $u$ over the cylinder $\mathcal{C}_k$ as in Theorem \ref{t:main}. Suppose $|u|_{C^1(\mathcal{C}_k \times [0, \infty)} \le \ep_0$ for some small $\ep_0>0$ to be determined later. 

From \cite{SX22}*{Proposition A.1}, we can write the equation as
\begin{equation*}
    \partial_tu=Lu+\mathcal{Q}(u,\na u,\na^2u)
\end{equation*}
where $L$ is the linearized operator
\begin{align*}
    Lf=\Delta f-\frac{x}{2}\cdot \na f+f,
\end{align*}
and $\mathcal{Q}$ is the quadratic term satisfying
\begin{align}\label{e:Q}
    \mathcal{Q}(u,\na u,\na^2u)=-C(u^2+4u\Delta_\theta u+2|\na_{\theta}u|^2)+\tilde{\mathcal{Q}}(u,\na u,\na^2 u).
\end{align}
Here $\na_\theta$ and $\Delta_\theta$ mean that we are taking derivatives with respect to the spherical direction. Note that $\tilde{\mathcal{Q}}(u,\na u,\na^2 u)$ is analytic with respect to $(u,\na u,\na^2 u)$ and consists of terms that are cubic and higher power in $(u,\na u,\na^2 u)$ which satisfies
\begin{align*}
    \tilde{\mathcal{Q}}(u,\na u,\na^2 u)\leq  C(|\na u|^4+|\na u|^2|\na^2u|)
\end{align*}

First we prove that the estimates for higher order derivatives are guaranteed by smallness of $C^1$-norm.

\begin{Lem}
    Suppose $u$ be a solution to \eqref{e:Q} satisfying $|u|_{C^1(\mathcal{C}_k \times [0,\infty))} \le \ep$. Then for any $p>0$ we have
    \begin{equation}\label{e:assume C^p-norm small}
    |u|_{C^p(\mathcal{C}_k \times [1,\infty))} \le C(n,p)\ep,
\end{equation}
\end{Lem}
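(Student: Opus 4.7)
The plan is to view equation \eqref{e:Q} as a uniformly parabolic quasilinear PDE and then bootstrap higher spatial derivatives by standard interior parabolic estimates. Collecting the principal second-order terms (in particular absorbing the contribution $4Cu\,\D_\theta u$ coming from $\Q$) into the leading part, the equation takes the schematic form
\begin{align*}
    \partial_t u = a^{ij}(u,\na u)\,\partial_{ij}u - \tfrac{x}{2}\cdot\na u + f(u,\na u),
\end{align*}
where $a^{ij}$ and $f$ are real-analytic in a neighborhood of the origin with $a^{ij}(0,0)=\delta^{ij}$. The $C^1$-smallness $|u|_{C^1}\le \ep$ therefore guarantees uniform ellipticity of the principal part with ellipticity constants independent of $(x,t)$, and the forcing $f(u,\na u)$ is uniformly bounded.

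The only coefficient which fails to be globally bounded on $\mathcal{C}_k\times[0,\infty)$ is the drift $-\tfrac{x}{2}$. To remove this obstruction I would fix an arbitrary target point $(x_0,t_0)\in \mathcal{C}_k\times [1,\infty)$ and pass to the moving-frame variable
\begin{align*}
    \phi(y,t) := u\pr{x_0\,e^{(t-t_0)/2}+y,\; t}.
\end{align*}
A direct computation using that $\partial_t\!\pr{x_0\,e^{(t-t_0)/2}}=\tfrac{x_0\,e^{(t-t_0)/2}}{2}$ shows that the $x_0$-dependent piece of the drift cancels exactly, and $\phi$ satisfies
\begin{align*}
    \partial_t\phi = a^{ij}(\phi,\na\phi)\,\partial_{ij}\phi - \tfrac{y}{2}\cdot\na\phi + f(\phi,\na\phi).
\end{align*}
Thus on any fixed parabolic cylinder $Q_r=B_r(0)\times(t_0-r^2,t_0]$ the drift is now bounded by $r/2$ \emph{uniformly in $x_0$}, while $|\phi|_{C^1}\le \ep$ continues to hold.

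On the cylinder $Q_{1/2}$ I would now apply standard interior parabolic theory. Krylov--Safonov (or De Giorgi--Nash--Moser in divergence form) yields a $C^\alpha$ estimate for $\phi$, and interior parabolic Schauder then upgrades this to $|\phi|_{C^{1,\alpha}(Q_{1/4})}\le C(n)\ep$. To iterate I would differentiate the equation: for any multi-index $\beta$ with $|\beta|\ge 1$, $\partial^\beta \phi$ satisfies a linear parabolic equation whose coefficients and inhomogeneity depend polynomially on derivatives of $\phi$ of order $\le |\beta|+1$, all of which have been controlled at the previous step. Inductive application of Schauder yields $|\phi|_{C^p(Q_{r_p})}\le C(n,p)\ep$ for some $r_p>0$ depending only on $n$ and $p$. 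Translating back via $u(x,t_0)=\phi(x-x_0,t_0)$ and taking the supremum over $(x_0,t_0)$ with $t_0\ge 1$ gives \eqref{e:assume C^p-norm small}. The restriction $t_0\ge 1$ is only used so that interior-in-time parabolic estimates have enough room below $t_0$.

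The main obstacle is the unbounded drift $-\tfrac{x}{2}\cdot\na u$: a naive application of Schauder to the original equation produces constants that grow with $|x_0|$, which would be fatal for a bound of the form $C(n,p)\ep$. The moving-frame substitution above is the key device that converts the problem to a translation-invariant one on fixed-size parabolic cylinders and so makes the constants genuinely uniform over $\mathcal{C}_k$. Once this is in place, the rest is a textbook quasilinear parabolic bootstrap, with $C^1$-smallness doing double duty: it yields uniform ellipticity of $a^{ij}(u,\na u)$ and ensures that all the higher-order terms from the analytic function $\tilde\Q$ in \eqref{e:Q} behave as small perturbations.
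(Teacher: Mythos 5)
Your proposal is correct, and it rests on the same underlying observation as the paper's proof: the only obstruction to uniform parabolic estimates over all of $\mathcal{C}_k$ is the unbounded drift $-\tfrac{x}{2}\cdot\nabla$, which is removed by a change of variables adapted to the integral curves $x(t)=x_0\,e^{t/2}$ of the rescaling vector field. The concrete implementation, however, is different. The paper uses the \emph{global dilation}
\begin{align*}
  v_{t_0}(y,\theta,t) = u(e^{t/2}y,\theta,t+t_0),
\end{align*}
which kills the drift entirely but converts the spatial Laplacian into $e^{-t}\Delta_y$, so the resulting equation has time-dependent principal coefficients (uniformly parabolic only on a fixed time window such as $t\in[-1,1]$), and the paper then cites local derivative estimates from \cite{LZ2}, \cite{Sto}, \cite{Bam}, \cite{App}. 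Your transformation is instead a \emph{pointwise translation} along the same trajectory through $(x_0,t_0)$: it preserves $\Delta_y$ exactly, leaves a bounded residual drift $-\tfrac{y}{2}\cdot\nabla_y$ on a fixed parabolic cylinder, and reduces the problem to textbook interior estimates for a uniformly parabolic quasilinear equation near the origin. Your version is arguably more elementary and self-contained, at the cost of having to repeat the argument for each $(x_0,t_0)$ and take a supremum; the paper's version handles the whole cylinder at once but needs the cited local estimates because the rescaled Laplacian $e^{-t}\Delta_y$ is not time-translation-invariant.

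One technical step in your bootstrap is stated a little loosely: Krylov--Safonov gives a $C^\alpha$ bound on $\phi$, but to invoke parabolic Schauder for the quasilinear equation you need the coefficients $a^{ij}(\phi,\nabla\phi)$ to be H\"older in $(y,t)$, and hence you need $\nabla\phi\in C^\alpha$ before Schauder applies, which Krylov--Safonov alone does not give. The standard way to close this is to note that the $C^1$-smallness makes $a^{ij}$ a small $L^\infty$ perturbation of $\delta^{ij}$, so parabolic $W^{2,p}$ (Calder\'on--Zygmund) theory gives $\phi\in W^{2,p}_{\rm loc}$ for all $p$, hence $\nabla\phi\in C^\alpha$ by Morrey embedding; after that, $a^{ij}\in C^\alpha$ and your Schauder iteration runs exactly as you describe. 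This is a routine fix and does not affect the correctness of the argument, but it should be spelled out so that the passage from $C^\alpha$ to $C^{1,\alpha}$ is actually justified.
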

\begin{proof}
    Given $t_0$. We consider the rescaled function 
    \begin{align*}
        v_{t_0}(y,\theta,t)=u(e^{\frac{t}{2}}y,\theta,t+{t_0}).
    \end{align*}

Then we know that 
\begin{align*}
    \partial_tv_{t_0}&=(\na u\cdot \frac{x}{2})(e^{\frac{t}{2}}y,\theta,t)+\partial_tu(e^{\frac{t}{2}}y,\theta,t+{t_0})\\ \notag&=(\na u\cdot \frac{x}{2})(e^{\frac{t}{2}}y,\theta,t+t_0)+(Lu+\mathcal{Q}(u))(e^{\frac{t}{2}}y,\theta,t+{t_0})\\ \notag&=(\Delta u+u+\mathcal{Q}(u))(e^{\frac{t}{2}}y,\theta,t+{t_0})\\ \notag &=v+(e^{-t}\Delta_y+\Delta_\theta)v+ \mathcal{Q}(v,e^{-\frac{t}{2}}\na_y v,\na_\theta v,e^{-t}\na^2_{yy} v,e^{-\frac{t}{2}}\na^2_{\theta y} v,\na^2_{\theta\theta} v).
\end{align*}

Applying local derivative estimates (see \cite{LZ2}*{Proposition 4.1}, \cite{Sto}*{Proposition C.3} and also \cites{Bam,App}) we have that  
\begin{align*}
    |v_{t_0}|_{C^p(\mathcal{C}_k \times [-\frac{1}{2},\frac{1}{2}])}\leq C(n,p) |v_{t_0}|_{C^1(\mathcal{C}_k \times [-1,1])}.
\end{align*}
Since $t_0$ is arbitrary we have that 
\begin{equation*}
        |u|_{C^p(\mathcal{C}_k \times [1,\infty))} \le C(n,p)\ep. 
    \end{equation*}
\end{proof}

Let $\mu$ be the standard measure on the cylinder $\mathcal{C}_k = \RR^k \times \mathbb{S}^{n-k}_{\sqrt{2(n-k)}}$. We denote by $L^2$ be the $L^2$-space of functions on $\mathcal{C}_k$ with respect to the weighted $L^2$ measure $e^{-\frac{|x|^2}{4}}d\mu$.

For $p < \infty$, the $H^p$-norm is defined in a standard way as follows
\begin{equation*}
    |u|^2_{H^p} \equiv \sum_{2k + l \le p} \int |\nabla^l \partial_t^k  u|^2  e^{-\frac{|x|^2}{4}} d\mu.
\end{equation*}

First we prove that $|u|_{H^p}$ controls $|Q|_{H^{p}}$ for $p\ge 2$. This can be seen from the quadratic nature of $Q$. The proof follows Lemma 6.2 in \cite{CH24}, where similar result was proved for the compact case.

\begin{Lem}\label{l:H^p (u) controls Q}
    Let $p\ge2$ and $u$ be a solution to \eqref{e:Q} satisfying $|u|_{C^p} \le \ep$. Then we have
    \begin{equation*}
        |\mathcal{Q}|_{H^p} \le C(n,p,\mathcal{Q})\ep |u|_{H^p}. 
    \end{equation*}
\end{Lem}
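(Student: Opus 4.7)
The plan is to expand $\mathcal{Q} = -C(u^2 + 4u\Delta_\theta u + 2|\nabla_\theta u|^2) + \tilde{\mathcal{Q}}$ and bound each piece by applying the parabolic Leibniz rule and then splitting factors between $L^\infty$ (controlled by $|u|_{C^p}\le \ep$) and the weighted $L^2$ (controlled by $|u|_{H^p}$). For a typical quadratic term $A\cdot B$, where each of $A, B$ is a derivative of $u$ of order at most $2$, applying $\nabla^l \partial_t^k$ yields
\[
\nabla^l \partial_t^k(A\cdot B) \;=\; \sum_{\substack{l_1+l_2=l \\ k_1+k_2=k}} \binom{l}{l_1}\binom{k}{k_1}\bigl(\nabla^{l_1}\partial_t^{k_1}A\bigr)\bigl(\nabla^{l_2}\partial_t^{k_2}B\bigr).
\]
In each summand the two factors have parabolic orders adding up to at most $p+4$, so by symmetry at least one carries order $\le p/2+2$; combined with the preceding lemma, which boosts $|u|_{C^1}\le \ep$ to $|u|_{C^{p'}}\le C(p')\ep$ for arbitrary $p'$, that small-order factor is bounded in $L^\infty$ by $C\ep$, while the other factor is absorbed into (a multiple of) $|u|_{H^p}$.

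For the analytic remainder $\tilde{\mathcal{Q}}$, I would use its convergent power series in $(u,\nabla u,\nabla^2 u)$, whose terms start at cubic order; uniform convergence follows because $|u|_{C^2}\le \ep$ lies strictly inside the radius of convergence. The same $L^\infty\times L^2$ split applied to each monomial of multi-degree $m\ge 3$ yields a bound of $C\ep^{m-1}|u|_{H^p}$, and summing the resulting geometric-type series gives $|\tilde{\mathcal{Q}}|_{H^p}\le C\ep^2|u|_{H^p}$. Adding the explicit quadratic contributions and the higher-order contribution yields the claim.

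The main obstacle is the borderline summands in the Leibniz expansion where the $L^2$ factor would formally carry parabolic order exceeding $p$ (this happens for the $u\Delta_\theta u$ and $|\nabla_\theta u|^2$ terms when almost all derivatives land on the second-order factor). Here I would either integrate by parts in the spherical $\theta$-direction on $\mathcal{C}_k$ (which is closed and incurs no boundary terms, and whose direction is orthogonal to the Gaussian weight on the $\mathbb{R}^k$ factor) in order to move one $\nabla_\theta$ onto the small-order factor, or use the evolution equation $\partial_t u = Lu + \mathcal{Q}$ to trade a time derivative for second-order spatial derivatives plus a nonlinear correction that is itself controlled by the same bootstrap. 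This is the direct adaptation to the cylindrical setting of the argument in the compact case, Lemma 6.2 of \cite{CH24}.
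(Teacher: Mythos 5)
Your overall strategy --- Leibniz on the quadratic part, splitting factors between $L^\infty$ and weighted $L^2$, and treating the analytic remainder $\tilde{\mathcal{Q}}$ by its power series --- is the right one and is close to what the paper does (the paper organizes $\mathcal{Q}$ slightly differently, as $\sum_{i=0}^{2} q_i(u,\nabla u)\,\nabla^i u$ with $q_i(0,0)=0$, and then applies Leibniz, but the split is the same in spirit).

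The problem is that you have the $L^\infty/L^2$ assignment backwards, and this is exactly what manufactures the ``borderline'' obstacle you then go to great lengths to fix. You write that the factor of smaller parabolic order goes to $L^\infty$ and ``the other factor is absorbed into $|u|_{H^p}$.'' But the other factor is the one of \emph{larger} parabolic order, which can be as large as $p+2$ (or, by your overcount, $p+4$), and $|u|_{H^p}$ simply does not control $\nabla^{p+2}u$ in $L^2$. The correct assignment is the reverse: put the factor of \emph{larger} parabolic order in $L^\infty$ --- which is fine precisely because the previous lemma upgrades $|u|_{C^1}\le\ep$ to $|u|_{C^{p'}}\le C(p')\ep$ for every $p'$, so there is no order restriction on the $L^\infty$ factor --- and keep the factor of \emph{smaller} parabolic order in $L^2$. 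Since the explicit quadratic terms $u^2$, $u\Delta_\theta u$, $|\nabla_\theta u|^2$ carry a total of at most two derivatives, after $\nabla^l\partial_t^k$ the two parabolic orders sum to at most $p+2$ (not $p+4$), so the smaller one is at most $(p+2)/2 \le p$ whenever $p\ge 2$. Thus the small-order factor always lands inside $|u|_{H^p}$, the borderline cases disappear, and there is no need for integration by parts in $\theta$ or for invoking the evolution equation. I would also be wary of that integration-by-parts fix as stated: you are bounding $\int |\nabla^l\partial_t^k\mathcal{Q}|^2 e^{-|x|^2/4}\,d\mu$, an $L^2$ norm of a product, and one cannot freely move a $\nabla_\theta$ across that square the way one can inside a bilinear pairing, so it does not obviously reduce the top derivative count. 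The same remark applies to $\tilde{\mathcal{Q}}$: for a degree-$m$ monomial with at most one second-order factor, place the $m-1$ higher-order factors in $L^\infty$ (each contributing $C\ep$) and the single lowest-order factor in $L^2$; its parabolic order is at most $(p+m+1)/m\le p$ for $m\ge 3$, $p\ge 2$, and the series over $m$ converges geometrically as you note. With the assignment corrected the argument closes cleanly.
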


\begin{proof}
     
     From \eqref{e:Q} we know that Q satisfies \begin{align*}
    \mathcal{Q}(u)=\sum_{i=0}^2q_{i}\cdot \na^i u, 
\end{align*}
where $q_i(u,\na u)$ depend only on $u,\,\na u$ and are independent with respect to the location $x$. Here $q_i$ are smooth and satisfy that \begin{align*}
    q_i(0,0)=0.
\end{align*}
Combining with \eqref{e:Q}, we have that 
\begin{align*}
    |q_i(u,\na u)|\leq C(|u|+|\na u|+|\na u|^3).
\end{align*}
Also by taking derivatives, we can get
\begin{align*}
   |q_i|_{C^k}\leq C(k)(|u|_{C^{k+1}}+ |u|_{C^{k+1}}^3).
\end{align*}
The rest follows from Leibniz rule, similar as \cite{CH24}*{Lemma 6.2}
\end{proof}

We define another norm $V$ as follows:
\begin{equation*}
    |u|^2_{V^0}\equiv |u|^2_{L^2} = \int_{\mathcal{C}_k} u^2 e^{-\frac{|x|^2}{4}} d\mu,
\end{equation*}
and
\begin{equation*}
    |u|_{V^p}^2 \equiv  \sum_{k+l\le p} \int |L^l \partial_t^k u|^2 e^{-\frac{|x|^2}{4}} d\mu.
\end{equation*}

Then main result of this section is to prove the following estimate for $\mathcal{Q}$ similar to Lemma \ref{l:H^p (u) controls Q}. To prove this Proposition, we will show that the $V^p$-norm is equivalent to $H^{2p}$-norm. The proof of compact case can be found in \cite{CH24}*{Lemma 6.2}.

\begin{Pro}\label{p:u controls Quadratic term in V-norm}
Let $p\ge 1$ and $u$ be a solution to \eqref{e:Q} satisfying $|u|_{C^1} \le \ep$. Then we have
    \begin{equation*}
        |\mathcal{Q}|_{V^p} \le  \ep(n,p)\cdot |u|_{V^p}.
    \end{equation*}
\end{Pro}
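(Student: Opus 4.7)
My plan is to reduce this $V^p$-estimate to the analogous $H^{2p}$-estimate of Lemma~\ref{l:H^p (u) controls Q} via the norm equivalence
\[
c(n,p)\,|f|_{H^{2p}} \;\le\; |f|_{V^p} \;\le\; C(n,p)\,|f|_{H^{2p}},
\]
valid for any $f$ with bounded $C^{2p}$-norm on $\mathcal C_k\times[1,\infty)$. By the earlier lemma in this section, the hypothesis $|u|_{C^1}\le\ep$ upgrades to $|u|_{C^{2p}}\le C(n,p)\ep$, so this bound is satisfied by $u$ and hence by $\mathcal Q(u,\na u,\na^2 u)$. Granted the equivalence, applying Lemma~\ref{l:H^p (u) controls Q} with order $2p\ge 2$ yields
\[
|\mathcal Q|_{V^p} \;\le\; C\,|\mathcal Q|_{H^{2p}} \;\le\; C\ep\,|u|_{H^{2p}} \;\le\; C\ep\,|u|_{V^p},
\]
which is the proposition after absorbing constants into $\ep(n,p)$.

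The upper bound $|f|_{V^p}\le C|f|_{H^{2p}}$ is the easy direction: writing $L=\Delta-\tfrac{x}{2}\cdot\na+1$ and expanding $L^l\partial_t^k f$ produces a finite sum of terms $P_\alpha(x)\,\partial_t^k\na^{|\alpha|}f$ with $|\alpha|\le 2l$ and $P_\alpha$ polynomial in $x$ of degree at most $2l-|\alpha|$. Because $|P_\alpha(x)|^2 e^{-|x|^2/4}$ is uniformly bounded on $\mathcal C_k$ (the Gaussian weight absorbs polynomial growth in the $\RR^k$-direction, and the sphere factor is compact), the weighted $L^2$-norm of each such term is dominated by the corresponding $\na^{|\alpha|}\partial_t^k f$ contribution to $|f|_{H^{2p}}$.

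The lower bound is the technical core and I would prove it by induction on $p$. The base case $p=1$ amounts to
\[
\int |\na^2 f|^2 \,e^{-|x|^2/4}\,d\mu \;\le\; C\int (Lf)^2 \,e^{-|x|^2/4}\,d\mu + C\int\bigl(|\na f|^2+f^2\bigr) \,e^{-|x|^2/4}\,d\mu,
\]
which follows from a weighted Bochner identity on $\mathcal C_k$: setting $\Delta_f:=\Delta-\tfrac{x}{2}\cdot\na$ so that $L=\Delta_f+1$, the Bakry--Emery Ricci tensor $\mathrm{Ric}+\na^2(|x|^2/4)$ is nonnegative on $\RR^k\times \mathbb{S}^{n-k}_{\sqrt{2(n-k)}}$, so integration by parts against the Gaussian measure gives $\int(\Delta_f f)^2 e^{-|x|^2/4}\ge \int|\na^2 f|^2 e^{-|x|^2/4}$. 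Expanding $(Lf)^2=(\Delta_f f)^2+2f\Delta_f f+f^2$ and integrating the cross term by parts to $-2\int |\na f|^2 e^{-|x|^2/4}$ then yields the displayed inequality. For the inductive step, apply the same identity with $f$ replaced by $L^{l-1}\partial_t^k u$ (note $[L,\partial_t]=0$), extract two spatial derivatives at a time, and absorb the resulting lower-order error into the inductive $V^{p-1}$-control.

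The main obstacle is making the induction precise. One has to track the linearly-growing drift coefficient $\tfrac{x}{2}$ through each integration by parts (the Gaussian weight is exactly the one whose logarithmic derivative cancels this drift, but signs and constants must be checked at each stage), and justify that all weighted boundary terms at spatial infinity vanish --- this is automatic because $|u|_{C^{2p}}$ is bounded while $e^{-|x|^2/4}$ decays super-polynomially. A secondary bookkeeping issue is that the $V^{p-1}$-controlled quantities have orders $\le 2p-2$, whereas the lower-order terms produced at level $p$ can involve one extra gradient; these odd-order terms must be interpolated between $V^{p-1}$ and the $|\na^{2p}f|^2$-piece being extracted, which is handled by standard weighted interpolation together with smallness of $\ep$.
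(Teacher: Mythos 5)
Your top-level reduction is exactly the paper's: upgrade $|u|_{C^1}\le\ep$ to $|u|_{C^{2p}}\le C\ep$ by the regularity lemma, invoke the equivalence $|\cdot|_{V^p}\asymp|\cdot|_{H^{2p}}$ (the paper's Lemma~\ref{l:V-norm equiv to H-norm}), and sandwich with Lemma~\ref{l:H^p (u) controls Q}. That chain is the entire proof of Proposition~\ref{p:u controls Quadratic term in V-norm} in the paper.

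The gap is in your justification of the ``easy direction'' $|f|_{V^p}\le C|f|_{H^{2p}}$. After expanding $L^l\partial_t^k f$ into terms $P_\alpha(x)\,\partial_t^k\nabla^{|\alpha|}f$ you argue that $|P_\alpha(x)|^2 e^{-|x|^2/4}$ is uniformly bounded and that this dominates each term by the corresponding contribution to $|f|_{H^{2p}}$. But the $H^{2p}$-norm is itself Gaussian-weighted: you need
\[
\int |P_\alpha|^2\,|\partial_t^k\nabla^{|\alpha|}f|^2\,e^{-|x|^2/4}\,d\mu \;\le\; C\int |\partial_t^k\nabla^{|\alpha|}f|^2\,e^{-|x|^2/4}\,d\mu,
\]
and boundedness of $|P_\alpha|^2 e^{-|x|^2/4}$ gives only a comparison against the \emph{unweighted} $L^2$-norm of $\partial_t^k\nabla^{|\alpha|}f$, which is not a summand of $|f|_{H^{2p}}$. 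Once $\deg P_\alpha\ge 1$ the multiplier $|P_\alpha|$ is unbounded and the inequality above fails in general; one genuinely has to trade the polynomial weight for an extra derivative by integrating by parts against the Gaussian. The paper avoids expanding $L^l$ by hand: it proves the base case $p=1$ by citing \cite{CM15}*{Lemma 3.11}, which packages exactly this polynomial-to-derivative trade, and then runs the induction (in both directions) by applying that inequality to $L^{l-1}\partial_t^k u$ and $\partial_t u$, with the Bochner identity supplying the commutator control for the right-hand inequality. Your inductive sketch with Bochner for the hard direction is consonant with the paper; it is the easy direction that needs to be replaced by the \cite{CM15} input rather than a pointwise weight bound.
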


\begin{proof}[Proof of Proposition \ref{p:u controls Quadratic term in V-norm}]
    This is a direct consequence of Lemma \ref{l:H^p (u) controls Q} and the following Lemma \ref{l:V-norm equiv to H-norm}. 
\end{proof}

We prove the equivalence of $V^p$-norm and $H^{2p}$-norm.

\begin{Lem}\label{l:V-norm equiv to H-norm}
    Let $p\ge 0$. There exists some $C(n,p)>0$ such that for any $u \in H^{2p} \cap V^{p}$
    \begin{equation*}
        C^{-1} |u|^2_{V^p} \le |u|^2_{H^{2p}} \le C|u|^2_{V^p}.
    \end{equation*}
\end{Lem}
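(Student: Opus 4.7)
The plan is to prove the equivalence by induction on $p$. The base case $p=0$ is immediate, since both $|u|_{V^0}^2$ and $|u|_{H^0}^2$ reduce to $\int_{\mathcal{C}_k} u^2 \, e^{-\frac{|x|^2}{4}}d\mu$. Note that time derivatives enter both norms compatibly: a factor of $\partial_t$ counts the same as $L$ in $V^p$ and the same as $\nabla^2$ in $H^{2p}$, so the induction will ultimately reduce to the purely spatial comparison $|Lu|^2 \simeq |\nabla^2 u|^2 + \text{lower order}$ in the Gaussian weighted $L^2$-norm, applied to the intermediate derivatives $L^{l-1}\partial_t^k u$.

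For this spatial core (the $p=1$ case), write $L = \mathcal{L} + 1$ with $\mathcal{L}u = \Delta u - \tfrac{x}{2}\cdot\nabla u$. Because $\nabla e^{-\frac{|x|^2}{4}} = -\tfrac{x}{2}e^{-\frac{|x|^2}{4}}$, the operator $\mathcal{L}$ is self-adjoint in the Gaussian weighted $L^2$-space, with $\int \mathcal{L}u \cdot v\, e^{-\frac{|x|^2}{4}}d\mu = -\int \nabla u \cdot \nabla v\, e^{-\frac{|x|^2}{4}}d\mu$, and it satisfies the commutator relation $[\partial_i,\mathcal{L}] = -\tfrac{1}{2}\partial_i$. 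Combining self-adjointness twice with the commutator yields the identity
\begin{equation*}
\int (\mathcal{L}u)^2 \, e^{-\frac{|x|^2}{4}}d\mu
\;=\; \int |\nabla^2 u|^2 \, e^{-\frac{|x|^2}{4}}d\mu
\;+\; \tfrac{1}{2}\int |\nabla u|^2 \, e^{-\frac{|x|^2}{4}}d\mu,
\end{equation*}
which directly controls $\int |\nabla^2 u|^2 e^{-\frac{|x|^2}{4}}d\mu$ by $\int (|Lu|^2 + u^2)\, e^{-\frac{|x|^2}{4}}d\mu$. For the reverse direction, combine this with the Gaussian-weighted Poincaré-type inequality $\int |x|^{2m} v^2 \, e^{-\frac{|x|^2}{4}}d\mu \le C(n,m)\sum_{j\le m}\int |\nabla^j v|^2 \, e^{-\frac{|x|^2}{4}}d\mu$, which follows from $|x|^2 e^{-\frac{|x|^2}{4}} = -2\,x\cdot\nabla e^{-\frac{|x|^2}{4}}$ by integration by parts and induction on $m$. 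On $\mathcal{C}_k$ we have $|x|^2 = 2(n-k) + |y|^2$ with $y \in \mathbb{R}^k$, so the Gaussian structure effectively lives only in the $\mathbb{R}^k$ factor and the spherical directions contribute only bounded contributions.

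For general $p$, to establish $|u|_{H^{2p}}^2 \le C |u|_{V^p}^2$ I would apply the spatial identity above to each $L^{l-1}\partial_t^k u$ with $k+l \le p$ and peel off $L$-factors one at a time to control $\int |\nabla^{2l}\partial_t^k u|^2 e^{-\frac{|x|^2}{4}}d\mu$; the inductive hypothesis at level $p-1$ then absorbs all lower order terms produced by the commutator identity. Conversely, for $|u|_{V^p}^2 \le C |u|_{H^{2p}}^2$ I would expand each $L^l\partial_t^k u$ as a sum of $x$-polynomial coefficients times $\nabla^j\partial_t^k u$ with $j \le 2l$, and absorb every $|x|^m$-weight using the weighted Poincaré inequality. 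The main technical obstacle is the combinatorial bookkeeping for the polynomially growing drift coefficients $|x|^m$ that accumulate when expanding high powers of $L$; however, since each such factor is paired with the Gaussian weight $e^{-\frac{|x|^2}{4}}$, they are uniformly absorbable into spatial derivatives at the cost of constants depending only on $n$ and $p$, which is exactly what the lemma requires.
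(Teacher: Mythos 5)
Your proposal is correct and follows essentially the same route as the paper's proof: induction on $p$ with the $p=1$ spatial comparison $|Lu|_{L^2}\simeq|u|_{H^2}$ as the core, peeling off $L$- and $\partial_t$-factors in the inductive step, and absorbing the lower-order drift and curvature corrections via the inductive hypothesis (your weighted Poincar\'e inequality plays the role of the paper's integration-by-parts reduction). The only cosmetic difference is that the paper cites Lemma 3.11 of \cite{CM15} for the $p=1$ base and phrases the higher-order commutations through the Bakry--\'Emery Bochner identity $\Delta_f\nabla T=\nabla\Delta_f T+\mathrm{Ric}_f(\nabla T)$, which is precisely the covariant version of your flat-space commutator $[\nabla,\mathcal{L}]=-\tfrac12\nabla$ together with the sphere's curvature contribution that you correctly flag as bounded.
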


\begin{proof}
    First we recall that (see \cite{CM15} Lemma 3.11), for any $u \in H^2$,
    \begin{equation}\label{e:CM V-norm = H-norm}
        |Lu|_{L^2} \le c (|u|_{L^2} + |\nabla u|_{L^2} + |\nabla^2 u|_{L^2}) \le c'( |Lu|_{L^2} + |u|_{L^2} ).
    \end{equation}
Hence, the lemma holds for $p=1$.

We will use this inequality to prove the lemma. 

\textbf{Left Hand Side: }
We prove it by induction. The case $p=1$ holds by \eqref{e:CM V-norm = H-norm}. Suppose that the result holds for $p=p_0 \ge 1$. Then by induction we have
\begin{align*}
    |u|^2_{V^{p_0+1}} \le C |u|^2_{H^{2 p_0}}  + \sum_{k+l=p_0 +1 } \int |L^l \partial_t^k u|^2 e^{-\frac{|x|^2}{4}} d\mu 
\end{align*}

Fix $k+l = p_0 +1$. If $l\ge 1$, let $w \equiv L^{l-1} \partial_t^k u$. Then by \eqref{e:CM V-norm = H-norm} and induction hypothesis,
\begin{equation*}
    |L^l \partial_t^k u|_{L^2} \le |L u|_{V^{p_0}} \le C
    |L u|_{H^{2p_0}} \le C| u|_{H^{2p_0 + 2}}.
\end{equation*}
If $k \ge 1$, by induction hypothesis,
\begin{equation*}
    |L^{l} \partial_t^{k} u|_{L^2} \le |\partial_t u|_{V^{p_0}} \le C |\partial_t u|_{H^{2p_0}}  
    \le C|u|_{H^{2p_0 +2}}.
\end{equation*}

Combining all, we finish the induction:
\begin{equation*}
    |u|^2_{V^{p_0+1}} \le C|u|^2_{H^{2p_0+2}}.
\end{equation*}

\textbf{Right Hand Side: } Similarly, we note that the inequality holds for $p=1$ by \eqref{e:CM V-norm = H-norm} and we assume that it holds for $p=p_0 \ge 1$. That is
\begin{equation*}
    |u|^2_{H^{2p_0}} \le C|u|^2_{V^{p_0}}. 
\end{equation*}

Then
\begin{align*}
    |u|^2_{H^{2p_0+2}} = |u|^2_{H^{2p_0}} + \sum_{2p_0+1 \le 2k +l \le 2p_0} |\nabla^l \partial_t^k u|^2_{L^2} \le |u|^2_{V^{p_0}} + \sum_{2p_0+1 \le 2k +l \le 2p_0} |\nabla^l \partial_t^k u|^2_{L^2}.
\end{align*}

Next, we deal with the last term. Suppose $k\ge 1$. Hence by the induction hypothesis
\begin{align*}
    |\nabla^l \partial_t^k u|^2_{L^2} \le |\partial_t u|^2_{H^{2 p_0}} \le C|\partial_t u|^2_{V^{p_0}}  \le C|u|^2 _{V^{p_0+1}}.
\end{align*}

For the case where  $k=0$ and $l=p_0+1 \ge 2$, since we have that for $\Delta_f u=\Delta u-\la \na f,\,\na u\ra$, $\text{Ric}_f=\text{Ric}+\na^2f$ and $f=\frac{|x|^2}{4}$, by Bochner identity we have
\begin{align*}
    \Delta_f\na T=\na \Delta_f T+\text{Ric}_f(\na T).
\end{align*}
So we would have 
\begin{align*}
    |\nabla^l  u|^2_{L^2}&=\int\la \na^{l-1}u,\,\Delta_f\na^{l-1}u\ra d\mu \le \int\la \na^{l-1}u,\,\na^{l-1}\Delta_fu\ra d\mu +C| u|^2_{H^{2 p_0}} \\ &\le \int\la \Delta_f\na^{l-2}u,\,\na^{l-2}\Delta_fu\ra d\mu +C| u|^2_{H^{2 p_0}} \\ &\le \int\la \na^{l-2}\Delta_fu,\,\na^{l-2}\Delta_fu\ra d\mu +C| u|^2_{H^{2 p_0}} \le |L u|^2_{V^{p_0-1}}+ |u|^2_{V^{p_0}} \le C|u|^2 _{V^{p_0+1}}.
\end{align*}

Hence we finish the induction step. Then we prove the both sides and the proof is now finished.
\end{proof}

Let us end this section by a useful Lemma.
\begin{Lem}\label{l:polynomial to derivative}
    Let $u \in C^{\infty}(\mathcal{C}_k) \cap H^p (\mathcal{C}_k)$ be a solution to \eqref{e:Q}. Then
    \begin{equation*}
        \int_{\mathcal{C}_k} u |x|^p  e^{-\frac{|x|^2}{4}} d\mu \le C(n,p) |u|_{H^p} .
    \end{equation*}
\end{Lem}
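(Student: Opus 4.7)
My plan is to prove this by a direct Cauchy--Schwarz argument against the Gaussian weight, since the right-hand side is the full $H^p$-norm of $u$ (including just $u$ itself at order zero). Splitting the weight $e^{-|x|^2/4}$ symmetrically gives
\[
\int_{\mathcal{C}_k} u\,|x|^p\, e^{-|x|^2/4}\,d\mu \;\le\; \left(\int_{\mathcal{C}_k} u^2\, e^{-|x|^2/4}\,d\mu\right)^{1/2}\!\left(\int_{\mathcal{C}_k} |x|^{2p}\, e^{-|x|^2/4}\,d\mu\right)^{1/2}.
\]
The first factor equals $|u|_{L^2}$, which is trivially dominated by $|u|_{H^p}$ from the definition of the $H^p$-norm.

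For the second factor, I will write $x=(\theta, y)$ with $\theta\in \mathbb{S}^{n-k}_{\sqrt{2(n-k)}}$ and $y\in \RR^k$, so that $|x|^2 = 2(n-k) + |y|^2$ and the weight factorizes as a constant times $e^{-|y|^2/4}$. The integral then splits as the (finite) volume of the sphere times a Gaussian polynomial moment $\int_{\RR^k}\!\bigl(2(n-k) + |y|^2\bigr)^p e^{-|y|^2/4}\,dy$, which is a finite constant depending only on $n$ and $p$. Combining the two factors yields the claim with constant $C(n,p)$.

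There is essentially no obstacle here: the only thing to verify is the finiteness of the polynomial Gaussian moment on $\mathcal{C}_k$, which is immediate. Note that the hypotheses that $u$ is smooth and solves \eqref{e:Q} play no role in the argument; they are only needed downstream so that this integral quantity is well-defined and can be fed into further estimates. The name ``polynomial to derivative'' in fact suggests an alternate route, fitting the spirit of the paper's spectral approach: one could use the Hermite-type identity $\partial_y^k e^{-|y|^2/4} = p_k(y)\,e^{-|y|^2/4}$ to express $|x|^{p} e^{-|x|^2/4}$ as a linear combination of $\partial^j e^{-|x|^2/4}$ for $j \le p$ and then integrate by parts, transferring the $|x|^p$ weight onto at most $p$ derivatives of $u$, followed by Cauchy--Schwarz. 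Either route yields the same bound, but the direct Cauchy--Schwarz is the cleanest.
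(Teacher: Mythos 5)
Your direct Cauchy--Schwarz argument is correct, and it is a genuinely different (and simpler) route than the paper's. The paper proves the lemma by integration by parts against the Gaussian weight: from $\int \partial_i(u\,e^{-|x|^2/4})\,d\mu = 0$ one gets $\int u\,x_i\,e^{-|x|^2/4}\,d\mu = 2\int \partial_i u\,e^{-|x|^2/4}\,d\mu$, which is then bounded by $C|u|_{H^1}$ via H\"older, and iterating this $p$ times transfers the full $|x|^p$ weight onto up to $p$ derivatives of $u$ -- exactly the ``polynomial to derivative'' device you correctly guessed from the name. Your argument instead splits the weight symmetrically and absorbs $|x|^{2p}$ into the finite Gaussian polynomial moment on $\mathcal{C}_k$, which, as you note, factorizes over $\mathbb{S}^{n-k}_{\sqrt{2(n-k)}} \times \RR^k$ since $|x|^2 = 2(n-k) + |y|^2$. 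This is cleaner: it avoids the boundary-term discussion implicit in the integration by parts, and it actually yields the stronger bound $C(n,p)\,|u|_{L^2}$, of which the stated $C(n,p)\,|u|_{H^p}$ is a trivial consequence. You are also right that neither the PDE \eqref{e:Q} nor the full smoothness of $u$ is actually used in your proof (the paper's proof needs at least enough regularity and decay to justify the integration by parts); these hypotheses are inherited from the ambient setup rather than required by the estimate itself.
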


\begin{proof}
    Using integration by parts, we have
    \begin{align*}
        0 = \int_{\mathcal{C}_k} \partial_i( u e^{-\frac{|x|^2}{4}}) d\mu = \int_{\mathcal{C}_k} (\partial_i u)  e^{-\frac{|x|^2}{4}} d\mu - \int_{\mathcal{C}_k}  \frac{u x_i}{2}  e^{-\frac{|x|^2}{4}} d\mu.
    \end{align*}
    By H\"older inequality,
    \begin{equation*}
        \int_{\mathcal{C}_k}  u x_i  e^{-\frac{|x|^2}{4}} d\mu = 2 \int_{\mathcal{C}_k} (\partial_i u)  e^{-\frac{|x|^2}{4}} d\mu \le C |u|_{H^1}.  
    \end{equation*}
    Then by iterations,
    \begin{align*}
        \int_{\mathcal{C}_k} u |x|^p  e^{-\frac{|x|^2}{4}} d\mu \le C(k,p) |u|_{H^p} .
    \end{align*}
\end{proof}

\section{Unique continuation of global graphs}

In this section, we prove the main Theorems \ref{t:main} and \ref{t:local}. Recall that we can write the rescaled mean curvature flow equation over the cylinder $\mathcal{C}_k$ as
\begin{equation*}
    \partial_tu=Lu+\mathcal{Q}(u,\na u,\na^2u)
\end{equation*}
where $L$ is the linearized operator
\begin{align*}
    Lu=\Delta u-\frac{x}{2}\cdot \na u+u,
\end{align*}
with eigenvalues
\begin{equation*}
    \lambda_1 > \lambda_2 \ge ... \to -\infty. 
\end{equation*}

A key observation is that we can find a positive gap for the difference of two different eigenvalues, which is crucial for the proof of maim theorems.

\begin{Lem}\label{l:sepctral gap}
    Set $ b_\lambda=\min\{|\lambda-\lambda_i|\,:\,\lambda_i\neq \lambda\}$. Then there exists some $c_0(n)>0$ such that $b_{\lambda} > c_0$ for any eigenvalue $\lambda$. 
\end{Lem}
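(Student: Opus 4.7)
The plan is to diagonalize $L$ explicitly by separation of variables on the product $\mathcal{C}_k = \mathbb{S}^{n-k}_{\sqrt{2(n-k)}} \times \mathbb{R}^k$. Writing a point as $x=(y,\omega)$ with $y \in \mathbb{R}^k$ and $\omega \in \mathbb{S}^{n-k}_{\sqrt{2(n-k)}}$, the intrinsic gradient splits as $\nabla = \nabla_y + \nabla_\theta$. The key observation is that the position vector $\omega$ on the spherical factor is normal to the sphere, so $\omega \cdot \nabla_\theta u = 0$, and therefore $x \cdot \nabla u = y \cdot \nabla_y u$. Consequently
\begin{equation*}
    L = L_y + \Delta_\theta, \qquad L_y := \Delta_y - \tfrac{y}{2}\cdot\nabla_y + 1,
\end{equation*}
where the two summands act on complementary factors and commute, each admitting a complete orthonormal eigenbasis in the weighted $L^2$ space.

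Next I would read off the two spectra from standard facts. On $L^2(\mathbb{R}^k, e^{-|y|^2/4}\,dy)$, the rescaled Ornstein–Uhlenbeck operator $\Delta_y - \tfrac{y}{2}\cdot\nabla_y$ is diagonalized by Hermite polynomials with eigenvalues $-\tfrac{|\vec{m}|}{2}$ for multi-indices $\vec{m}\in\mathbb{Z}_{\ge 0}^k$; hence $L_y$ has eigenvalues $1-\tfrac{|\vec{m}|}{2}$. The spherical Laplacian on $\mathbb{S}^{n-k}_{\sqrt{2(n-k)}}$ is diagonalized by spherical harmonics with eigenvalues $-\tfrac{l(l+n-k-1)}{2(n-k)}$ for $l\in\mathbb{Z}_{\ge 0}$. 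Taking tensor products of the two bases gives a complete orthonormal eigenbasis for $L$ on $L^2(\mathcal{C}_k, e^{-|x|^2/4}d\mu)$ with eigenvalues
\begin{equation*}
    \lambda(\vec{m}, l) \;=\; 1 - \frac{|\vec{m}|}{2} - \frac{l(l+n-k-1)}{2(n-k)}.
\end{equation*}

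The final step is a one-line integrality check. Multiplying through by $2(n-k)$ gives
\begin{equation*}
    2(n-k)\,\lambda(\vec{m},l) \;=\; 2(n-k) - (n-k)\,|\vec{m}| - l(l+n-k-1) \;\in\; \mathbb{Z},
\end{equation*}
so any two distinct eigenvalues of $L$ differ by at least $\tfrac{1}{2(n-k)}$. This yields the claimed gap with $c_0 = c_0(n) := \tfrac{1}{2(n-k)}$.

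I do not expect any real obstacle here: the proof is a direct explicit computation that rests on two classical spectral facts and the decoupling of the drift on the sphere. The only mild subtlety is the decoupling itself, but it is automatic because the position vector of a round sphere centered at the origin is purely normal, so it contributes nothing to the tangential drift.
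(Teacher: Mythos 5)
Your proof is correct and follows essentially the same route as the paper: split $L$ into the Ornstein–Uhlenbeck piece on $\mathbb{R}^k$ and the spherical Laplacian, read off the two spectra, and observe that the resulting set of eigenvalues is uniformly separated. The paper's version is terser — it simply lists the eigenvalues and then says "since $n,k$ are fixed" the gap exists — whereas you make the separation quantitative via the integrality of $2(n-k)\lambda(\vec m,l)$, which is a cleaner way to extract an explicit $c_0$. One small cosmetic point: your $c_0 = \tfrac{1}{2(n-k)}$ nominally depends on $k$ as well as $n$; since $1 \le n-k \le n$ you may simply take $c_0(n) = \tfrac{1}{2n}$ to match the stated dependence on $n$ alone.
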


\begin{proof}
    We can write the operator $L$ as 
    \begin{equation*}
        L  = (\Delta_{\RR^k}  - \frac{1}{2} x \cdot \nabla_{\RR^k} ) + \Delta_{\mathbb{S}^{n-k}} + 1.
    \end{equation*}
    Hence the spectrum of $L$ on $\mathbb{S}^{n-k}_{\sqrt{2(n-k)}}\times \RR^k$ is 
    \begin{equation*}
        \bigg\{-\frac{j}{2} - \frac{i(i-1+n-k)}{2(n-k)}+1 : i, j \in \mathbb{N} \bigg\}.
    \end{equation*}
    Since $n,k$ are fixed, it implies that $b_{\lambda} > c_0(n)$ for any eigenvalue $\lambda$. 
\end{proof}

We use $\phi_i$ to denote the eigenfunction of $L$ corresponding to eigenvalue $\lambda_i$. 
We set
\begin{equation*}
    u_i(t) \equiv \int u \cdot \phi_i e^{-\frac{|x|^2}{4}}d\mu,
\end{equation*}
and for $1\le j \le j_0$
\begin{equation*}
    z_j(t) \equiv u_{i_0+j}(t),
\end{equation*}
where $\lambda_{i_0+j}=0$ corresponds to the zero eigenvalue.

We further define: 
\begin{equation*}
    u^{k,l}_i \equiv \int L^l \partial^k_t u \cdot \phi_i  e^{-\frac{|x|^2}{4}} d\mu, \quad \mathcal{Q}^{k,l}_i \equiv \int  L^l \partial_t^k \mathcal{Q}(u) \cdot \phi_i  e^{-\frac{|x|^2}{4}}d\mu.
\end{equation*}
and
\begin{align}\label{general x comp}
        x_{\lambda,0}(t) = \Big( \sum_{k+ l \le p} \sum_{\lambda_i = \lambda} |u^{k,l}_i| ^2 (t)\Big)^{1/2}, \quad x_{\lambda,\pm}(t) = \Big( \sum_{k+l\le p} \sum_{\pm (\lambda_i- \lambda)>0}  |u^{k,l}_i|^2 (t) \Big)^{1/2},
\end{align}
and thus we have
\begin{equation*}
    |u|_{V^p}^2 = x_{\lambda,0}^2 + x_{\lambda,+}^2 + x_{\lambda,-}^2.
\end{equation*}

By \eqref{e:Q} we have the following equations
\begin{align}\label{e:compe}
    \frac{d}{dt} u^{k,l}_i - \lambda_i u^{k,l}_i = \mathcal{Q}^{k,l}_i. 
\end{align}

\subsection{Global super-exponential convergence}

\begin{Thm}[Theorem \ref{t:main}]
     Let $\tilde{\varphi}:M\times[0,\infty)\to \RR^{n+1}$ be a solution to the rescaled mean curvature flow equation \eqref{e:recaled mcf} such that $M_t \equiv \tilde{\varphi}(M,t)$ can be expressed as a normal graph of a function $u(x,t)$ over the round cylinder $\mathcal{C}_k=\mathbb{S}^{n-k}_{\sqrt{2(n-k)}}\times \RR^{k}$. Then there exists some $\ep_0(n)$ such that the following holds: if $\|u\|_{C^1(\mathcal{C}_k \times [0, \infty))}\leq \ep_0$ and we have
    \begin{equation}\label{e:growthcontrol}
    \lim_{t\to \infty} e^{\alpha t}\int_{\mathcal{C}_k} u^2(x,t) e^{-\frac{|x|^2}{4}}d\mu =0 \text{ for any } \alpha\in \mathbb{N}
\end{equation}   
where $d\mu$ is the standard measure on $\mathcal{C}_k$, then we have $u \equiv 0$.
\end{Thm}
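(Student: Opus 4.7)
The strategy is a spectral Merle–Zaag / ODE argument, using the spectral gap $c_0(n)$ from Lemma~\ref{l:sepctral gap} and the quadratic control from Proposition~\ref{p:u controls Quadratic term in V-norm}. First I would upgrade the hypothesised super-exponential decay from $L^2$ to every $V^p$-norm: the uniform $C^p$ bound from Lemma~\ref{l:H^p (u) controls Q}, combined with the norm equivalence $V^p\simeq H^{2p}$ of Lemma~\ref{l:V-norm equiv to H-norm} and elementary interpolation, transfers super-exponential decay of $|u|_{L^2}$ to every $|u|_{V^p}$ and, by Cauchy–Schwarz, to the individual spectral coefficients $u_i^{k,l}$ and the nonlinear errors $\mathcal{Q}_i^{k,l}$.

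Next, expanding $u$ in the eigenbasis of $L$, equation \eqref{e:compe} together with Proposition~\ref{p:u controls Quadratic term in V-norm} yields, for each eigenvalue $\lambda$ of $L$ and each $p\geq 1$, the standard Filippas–Kohn/Merle–Zaag system for the shifted quantities $\tilde x_{\lambda,\pm,0}:=e^{-\lambda t}x_{\lambda,\pm,0}$:
\[
\tilde x_+' \geq c_0\,\tilde x_+ - C\varepsilon\,\tilde x, \qquad
|\tilde x_0'|\leq C\varepsilon\,\tilde x, \qquad
\tilde x_-' \leq -c_0\,\tilde x_- + C\varepsilon\,\tilde x,
\]
with $\tilde x := \tilde x_+ + \tilde x_0 + \tilde x_-$. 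Super-exponential decay forces $\tilde x \to 0$, which rules out unstable dominance and leaves the dichotomy ``neutral dominates'' vs.\ ``stable dominates.'' In the neutral-dominant case, $|\tilde x_0'|\leq C\varepsilon\,\tilde x_0(1+o(1))$ forces $x_{\lambda,0}(t)\geq c\,e^{(\lambda-C\varepsilon)t}$, directly contradicting super-exponential decay once $\varepsilon_0(n)$ is chosen so that $c_0>C\varepsilon_0$; hence $x_{\lambda,0}\equiv 0$ eventually.

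To rule out the stable-dominant alternative, I would assume $u\not\equiv 0$ and set $\lambda^*:=\sup\{\lambda_i: u_i\not\equiv 0\}$, which is attained since the spectrum is bounded above by $\lambda_1$. Applying the dichotomy at the non-eigenvalue $\lambda^*+c_0/2$ (inside the spectral gap above $\lambda^*$) makes $\tilde x_+$ and $\tilde x_0$ vanish identically, so the remaining one-component inequality gives $|u|_{V^p}(t)\leq C\,e^{(\lambda^*-c_0/2)t}$. Feeding this into the integral representation $u_{i_0}(t)=-\int_t^\infty e^{\lambda^*(t-s)}\mathcal Q_{i_0}(s)\,ds$ (valid because super-exponential decay gives $e^{-\lambda^*t}u_{i_0}(t)\to 0$, since otherwise $|u_{i_0}(t)|\gtrsim e^{\lambda^*t}$ would contradict the hypothesis), and using $|\mathcal Q_{i_0}|\leq\varepsilon|u|_{V^p}$ from Proposition~\ref{p:u controls Quadratic term in V-norm}, yields the self-improving bound $|u_{i_0}(t)|\leq C\varepsilon\,e^{(\lambda^*-c_0/2)t}$; iterating eventually contradicts $u_{i_0}\not\equiv 0$, and backwards uniqueness for the linear operator $\partial_t - L$ with bounded perturbation then propagates $u\equiv 0$ back to $t=0$. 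The main obstacle is precisely this last branch: while the neutral-dominant case is killed trivially by super-exponential decay, the stable-dominant case is on its own consistent with the hypothesis and only collapses after combining the Merle–Zaag dichotomies at $\lambda^*$ and at a carefully chosen shifted non-eigenvalue, with the smallness threshold $\varepsilon_0(n)$ fixed so that $c_0(n)-C(n)\varepsilon_0>0$ throughout.
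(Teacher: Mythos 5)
Your overall framework (upgrade the decay to $V^p$-norms, run the Filippas--Kohn/Merle--Zaag ODE system on the spectral blocks, kill the neutral branch by a lower exponential bound) matches the paper's proof through step~3, but the endgame (step~4) has a quantitative error that breaks the argument, and it is also structurally different from what the paper actually does.

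The concrete gap is the claimed bound $|u|_{V^p}(t)\leq C e^{(\lambda^*-c_0/2)t}$ from the stable-dominant branch at $\lambda=\lambda^*+c_0/2$. Unwinding the Merle--Zaag conclusion (Lemma~\ref{L:Merle-Zaag}): when the stable component dominates, the shifted system yields $\limsup_{t\to\infty}e^{(b_\lambda-\ep)t}\,e^{-\lambda t}|u|_{V^p}=0$, i.e.\ $|u|_{V^p}(t)=o\!\left(e^{(\lambda-b_\lambda+\ep)t}\right)$. At $\lambda=\lambda^*+c_0/2$ the gap is $b_\lambda=c_0/2$ (the nearest eigenvalue is $\lambda^*$), so this gives $|u|_{V^p}=o\!\left(e^{(\lambda^*+\ep)t}\right)$, \emph{not} $e^{(\lambda^*-c_0/2)t}$ --- a loss of $c_0/2+\ep$ in the exponent. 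With the correct rate, $e^{-\lambda^*s}\mathcal{Q}_{i_0}(s)=o(\ep\,e^{\ep s})$, so the Duhamel integral $\int_t^\infty e^{\lambda^*(t-s)}\mathcal{Q}_{i_0}(s)\,ds$ does not converge, and the self-improving iteration never starts. There is also no obvious repair by choosing a different cut point inside the gap: any $\lambda\in(\lambda^*,\lambda^*+c_0)$ gives $b_\lambda=\min(\lambda-\lambda^*,\lambda^*+c_0-\lambda)\leq c_0/2$, so $\lambda-b_\lambda\geq\lambda^*$, and you never cross below $\lambda^*$.

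The paper's argument avoids this entirely. After ruling out the ``zero dominates'' branch at every eigenvalue $\lambda$ (your step~3, which the paper runs through $z_\lambda$ and \eqref{e:zdom} rather than $x_{\lambda,0}$ directly), it does \emph{not} try to rule out the stable-dominant branch --- it accepts it for \emph{every} eigenvalue $\lambda$. Then \eqref{e:smallp} together with \eqref{e:negdom} give $|u|_{V^p}(t)\leq C\,x_{\lambda,-}(t)$ on $[0,\infty)$, and since $x_{\lambda,-}$ is the tail $\left(\sum_{k+l\le p}\sum_{\lambda_i<\lambda}|u_i^{k,l}|^2\right)^{1/2}$ of a convergent series, sending $\lambda\to-\infty$ squeezes $|u|_{V^p}(t)=0$ for each fixed $t$. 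This yields $u\equiv 0$ directly, with no Duhamel representation and --- importantly --- no appeal to backward uniqueness. Your final invocation of ``backwards uniqueness for $\partial_t-L$ with bounded perturbation'' is exactly the sort of step that is delicate on the noncompact cylinder; indeed the second half of the paper constructs Tikhonov-type counterexamples showing backward uniqueness fails for local graphs, so leaning on it as a black box here should be avoided.

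One further small point: your step~3 conclusion ``hence $x_{\lambda,0}\equiv 0$ eventually'' is not the Merle--Zaag output. The lemma asserts that, once the neutral-dominant branch is contradicted, the stable-dominant inequality \eqref{e:negdom} holds on all of $[0,\infty)$ --- a comparison inequality between $x_{\lambda,0}$ and $x_{\lambda,-}$, not the vanishing of $x_{\lambda,0}$. That comparison, valid for all $t$, is precisely what the $\lambda\to-\infty$ limit argument needs.
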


Let us start with following Lemma, which shows that the assumptions in Theorem \ref{t:main} guaranteed convergence in $C^\infty_{\text{loc}}$ and super-exponential convergence in $H^k(e^{-\frac{|x|^2}{4}}d\mu)$.

\begin{Lem}\label{l:improve}
  For the $u$ defined in Theorem \ref{t:main}, we have that $u\to 0$ in $C^\infty_{\text{loc}}$ and
 \begin{equation}\label{e:highcon}
    \lim_{t\to \infty} e^{\alpha t}\int_{\mathcal{C}_k} |\na ^lu|^2(x,t) e^{-\frac{|x|^2}{4}}d\mu =0 \text{ for any } \alpha,l\in \mathbb{N}.
\end{equation}   
\end{Lem}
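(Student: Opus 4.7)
The plan is to bootstrap both conclusions from the two ingredients available: the uniform pointwise bound $|u|_{C^p(\mathcal{C}_k\times[1,\infty))}\le C(n,p)\ep_0$ established at the start of Section~2, and the hypothesis \eqref{e:growthcontrol} that $|u(\cdot,t)|_{L^2}^2$ decays faster than any exponential. Since $\int_{\mathcal{C}_k}(1+|x|^m)e^{-|x|^2/4}d\mu<\infty$ for every $m$ and all derivatives of $u$ are pointwise bounded, every $|u|_{H^p}$ and hence (by Lemma~\ref{l:V-norm equiv to H-norm}) every $|u|_{V^p}$ is uniformly bounded in $t$; this is the single time-independent ingredient I will pair with the super-exponential $L^2$-decay.

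For the $C^\infty_{\text{loc}}$ convergence, fix any compact $K\subset\mathcal{C}_k$. On $K$ the weight $e^{-|x|^2/4}$ is bounded below by a positive constant, so \eqref{e:growthcontrol} forces the unweighted $L^2(K)$-norm of $u(\cdot,t)$ to tend to zero. Combined with the uniform $C^p$ bounds, Arzel\`a--Ascoli gives pre-compactness of $\{u(\cdot,t)\}_{t\ge 1}$ in $C^{p-1}(K)$ for every $p$, and any subsequential limit must vanish by $L^2(K)$-convergence; hence $u(\cdot,t)\to 0$ in $C^\infty(K)$.

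For \eqref{e:highcon} I proceed by induction on $l$, the case $l=0$ being the hypothesis. The inductive step rests on the weighted integration by parts identity
\[
\int_{\mathcal{C}_k} |\nabla v|^2 e^{-\frac{|x|^2}{4}}d\mu = -\int_{\mathcal{C}_k} \langle v,\Delta_f v\rangle\, e^{-\frac{|x|^2}{4}}d\mu \le |v|_{L^2}\cdot |\Delta_f v|_{L^2},
\]
where $\Delta_f = \Delta - \tfrac{x}{2}\cdot\nabla = L - \mathrm{Id}$ and $f = |x|^2/4$. Applied componentwise with $v=\nabla^{l-1}u$, this bounds $|\nabla^l u|_{L^2}^2$ by $|\nabla^{l-1}u|_{L^2}\cdot |\Delta_f \nabla^{l-1}u|_{L^2}$. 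The second factor involves the pointwise bounded quantities $|\Delta\nabla^{l-1}u|$ and $|x||\nabla^l u|$ integrated against the Gaussian; by the uniform $C^p$ bounds and Lemma~\ref{l:polynomial to derivative} it is uniformly bounded in $t$ by some $C(n,l,\ep_0)$. By induction, the first factor decays faster than any exponential, so the same holds for $|\nabla^l u|_{L^2}^2$, closing the induction. The only technical subtlety is tracking the polynomial factors $|x|^k$ produced by iterating the drift $-\tfrac{x}{2}\cdot\nabla$, and these are absorbed uniformly by combining the $C^p$ bounds with Gaussian integrability; no analytic ingredient beyond Section~2 is required.
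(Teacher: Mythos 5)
Your proof is correct, and it takes a genuinely different route from the paper's. The paper deduces both the $C^\infty_{\text{loc}}$ convergence and the super-exponential $H^l$-decay in one stroke from a Gaussian-weighted Gagliardo--Nirenberg interpolation inequality (citing \cite{CM15} and \cite{Z20a}), which bounds $|\nabla^l u(x)|$ pointwise and $\int |\nabla^l u|^2 e^{-|x|^2/4}$ by a positive power of $\int u^2 e^{-|x|^2/4}$; since a fixed power of a super-exponentially decaying quantity is again super-exponentially decaying, the conclusion follows immediately. Your argument replaces this black box by two elementary pieces: an Arzel\`a--Ascoli compactness argument on each compactum $K$ for the $C^\infty_{\text{loc}}$ convergence, and an induction on $l$ using the self-adjointness of $\Delta_f$ (weighted integration by parts) plus Cauchy--Schwarz for the $H^l$-decay. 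The inductive step is tight: $|\nabla^l u|_{L^2}^2\le |\nabla^{l-1}u|_{L^2}\,|\Delta_f\nabla^{l-1}u|_{L^2}$, and the second factor is uniformly bounded because the $C^p$-bounds of \eqref{e:assume C^p-norm small} together with the finite Gaussian moments $\int(1+|x|^2)e^{-|x|^2/4}d\mu<\infty$ control the drift term $|x|\,|\nabla^l u|$ pointwise (you cite Lemma~\ref{l:polynomial to derivative} here but it is not strictly needed — the pointwise $C^p$-bound alone suffices). Since the target is decay faster than every exponential rather than a fixed rate, the Cauchy--Schwarz "square root" loss per step is harmless, so the induction closes. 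The trade-off: your route is self-contained and avoids invoking the interpolation machinery, at the cost of being somewhat longer; the paper's is shorter but leans on the cited inequality.
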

\begin{proof}[Proof of Lemma \ref{l:improve}]
From Gagliardo-Nirenberg interpolation inequality (c.f. \cite{CM15} \cite{Z20a}*{(A,1),(A.2)}) combining with \eqref{e:assume C^p-norm small}, we have that 
\begin{align*}
|\na^lu(x)|\leq C(|x|+1)^{l+n}e^{\frac{1-\delta(l,n)}{4}|x|^2}(\int_{\mathcal{C}_k} u^2(x,t) e^{-\frac{|x|^2}{4}}d\mu)^{C_2}\\
    \int_{\mathcal{C}_k} |\na ^lu|^2(x,t) e^{-\frac{|x|^2}{4}}d\mu\leq C_1 (\int_{\mathcal{C}_k} u^2(x,t) e^{-\frac{|x|^2}{4}}d\mu)^{C_2}.
\end{align*}
Thus, combining with \eqref{e:growthcontrol} we could get  $u\to 0$ in $C^\infty_{\text{loc}}$ and \eqref{e:highcon}.
\end{proof}

\begin{proof}[Proof of Theorem \ref{t:main}]

Recall that $b_\lambda=\min\{|\lambda-\lambda_i|\,:\,\lambda_i\neq \lambda\}$. 

From \eqref{e:compe}, we have 
\begin{align*}
    \frac{d}{dt}(|u_i^{k,l}|^2)=2u_i^{k,l}(\lambda_i u^{k,l}_i + \mathcal{Q}^{k,l}_i)=2\lambda_i|u_i^{k,l}|^2+2u_i^{k,l} \mathcal{Q}^{k,l}_i,
\end{align*}
and by Cauchy-Schwarz inequality, we have
\begin{align*}
    x_{\lambda,+}'- \lambda x_{\lambda,+} &\ge b_\lambda   x_{\lambda,+}  - |\mathcal{Q}|_{V^p},\\
    x_{\lambda,-}'-\lambda x_{\lambda,-} &\le -b_\lambda x_{\lambda,-} + |\mathcal{Q}|_{V^p},\\
    |x_{\lambda,0}'-\lambda x_{\lambda,0}| &\le |Q|_{V^p}. 
\end{align*}
Note that by Lemma \ref{l:H^p (u) controls Q}, we have 
\begin{align*}
    |\mathcal{Q}|_{V^p}\leq C\ep_0\cdot |u|_{V^p} \le \ep(x_{\lambda,+}+x_{\lambda,0}+x_{\lambda,-}).
\end{align*}
where denote $\ep:=C\ep_0$. So if we take
\begin{align*}
    \tilde{x}_{\lambda,+}=&e^{-\lambda t}{x}_{\lambda,+},\\
        \tilde{x}_{\lambda,0}=&e^{-\lambda t}{x}_{\lambda},\\    \tilde{x}_{\lambda,-}=&e^{-\lambda t}{x}_{\lambda,-},
\end{align*}
and by direct computation we obtain
\begin{align*}
    \tilde{x}_{\lambda,+}'\geq& b_\lambda\tilde{x}_{\lambda,+}-\ep(\tilde{x}_{\lambda,+}+\tilde{x}_{\lambda,0}+\tilde{x}_{\lambda,-}),\\
      \tilde{x}_{\lambda,-}'\leq &-b_\lambda\tilde{x}_{\lambda,-}+\ep(\tilde{x}_{\lambda,+}+\tilde{x}_{\lambda,0}+\tilde{x}_{\lambda,-}),\\
        |\tilde{x}_{\lambda,0}'|\leq &\ep(\tilde{x}_{\lambda,+}+\tilde{x}_{\lambda,0}+\tilde{x}_{\lambda,-})
\end{align*}

By assumption and Lemma \ref{l:improve}, we know $\liminf_{t \to \infty} \tilde{x}_{\lambda,+} = 0$. From Lemma \ref{L:Merle-Zaag} and the gap \ref{l:sepctral gap}, after multiplying $e^{\lambda t}$ we can get 
\begin{align}\label{e:smallp}
     {x}_{\lambda,+}\leq 2\frac{\ep}{b_\lambda} ( {x}_{\lambda,0}+ {x}_{\lambda,-}).
\end{align}
Moreover, one the the following holds:
\begin{align}\label{e:zerodom}
{x}_{\lambda,-}\leq 8\frac{\ep}{b_\lambda}  {x}_{\lambda,0}\,\,\text{in}\,\,[s_\lambda,\infty)
\end{align}
or\begin{align}\label{e:negdom}
    {x}_{\lambda,0}\leq 20\frac{\ep}{b_\lambda}  {x}_{\lambda,-}\,\,\text{in}\,\,[0,\infty).
\end{align}
We claim that equation \eqref{e:negdom} holds. Let us prove by contradiction. Suppose that equation \eqref{e:zerodom} holds, then let us set \begin{align*}
    z_{\lambda,i}(t)=\int u \phi_{\lambda,i}e^{-\frac{|x|^2}{4}}d\mu,
\end{align*}
where $ \phi_{\lambda,i}$ is the $i$-th eigenfunction corresponding to eigenvalue $\lambda$. Then we claim that \begin{align}\label{e:zerocontrolhigh}
    |x_{\lambda,0}|\leq C(\lambda,p)|z_\lambda|.
\end{align}
In the following we suppose $i \in I_\lambda$, which means $\lambda_i = \lambda$. Note that
\begin{equation*}
    |x_{\lambda,0}|^2 \le |z_\lambda|^2 + \sum_{i \in I_\lambda} \Bigg( \sum_{\substack{k+l \le p\\ l \ge 1}} |u^{k,l}_i|^2 + \sum_{\substack{k+l\le p \\ k\ge1}} |u^{k,l}_i|^2 \Bigg).
\end{equation*}

For $l \ge 1$, since $L \phi_{\lambda,i} = \lambda\phi_{\lambda,i}$, then
\begin{align*}
    u^{k,l}_i = \int \langle L^l \partial_t^k u, \phi_i \rangle d\mu = \int \langle L^{l-1} \partial_t^k u, L \phi_i \rangle  d \mu = \lambda u^{k,l-1}_i
\end{align*}

For $k\ge 1$, we have
\begin{align*}
    u^{k,l}_i = \frac{d}{dt} u^{k-1,l}_i =\lambda u^{k-1,l}+ \mathcal{Q}^{k-1,l}_i.
\end{align*}

Then by induction we would have 
\begin{align*}
u^{k,l}_i=\lambda^{l}u_i^{k,0}=\lambda^l(\sum_{m=1}^{k-1} \lambda^{m-1}\mathcal{Q}_i^{k-m,0}+\lambda^k u_i^{0,0}),    
\end{align*}
Note  that we have \begin{align*}
    \lambda^l \lambda^{m-1}\mathcal{Q}_i^{k-m,0}=\mathcal{Q}^{k-m,m-1+l}_i.
\end{align*}

Combining with Lemma \ref{p:u controls Quadratic term in V-norm}, we would have 
\begin{align*}
    |x_{\lambda,0}|^2 \le C(\lambda,p)|z_\lambda|^2 + C(p)\sum_{i \in I_\lambda} \sum_{k+l \le p-1} |\mathcal{Q}^{k,l}_i|^2 \le C(\lambda,p)|z_\lambda|^2 + C(p)\frac{\ep}{b_\lambda} |u|_{V^p}^2 \le C(\lambda,p)|z_\lambda|^2 + 2 \frac{\ep}{b_\lambda} C(p)|x_{\lambda,0}|^2.
\end{align*}
Since $p$ is chosen initially which is independent with respect to $\lambda$, we could take $\ep $ sufficiently small such that $2 \frac{\ep}{b_\lambda}C(p)\leq \frac{1}{2}$, where we used that $b_\lambda>c_0$ for all $\lambda$ corresponding to the spectrum gap, and thus the claim is proved, that is, \begin{align}
    |x_{\lambda,0}|\leq C(\lambda,p)|z_\lambda|.\label{e:zdom}
\end{align}

Then we have 
\begin{align*}
    |z_\lambda'|z_\lambda=\frac{1}{2}|(z_\lambda^2)'|\leq \sum_{\lambda_i=\lambda}|u_i||u_i'|\leq z_\lambda(\sum_{\lambda_i=\lambda}|u_i'|^2)^{\frac{1}{2}}.
\end{align*}

Now we would like to control $(\sum_{\lambda_i=\lambda}|u_i'|^2)^{\frac{1}{2}}$. We have
\begin{align*}
u'_i = \langle u', \phi_i \rangle  =  \langle Lu + \mathcal{Q}u, \phi_i \rangle = \lambda u_i+\int \mathcal{Q} \cdot \phi_i e^{-\frac{|x|^2}{4}}d\mu ,
\end{align*}
and 
\begin{align*}
    \sum_{\lambda_i=\lambda}|u_i'|^2&=  \sum_{\lambda_i=\lambda}( \lambda u_i+\int \mathcal{Q} \cdot \phi_i e^{-\frac{|x|^2}{4}}d\mu)^2\leq\sum_{\lambda_i=\lambda} 2\lambda^2u_i^2+2(\int \mathcal{Q} \cdot \phi_i e^{-\frac{|x|^2}{4}}d\mu)^2\\ &\leq 2\lambda^2|z_\lambda|^2+2\int \mathcal{Q}^2 e^{-\frac{|x|^2}{4}}d\mu\leq C(n,\lambda,\ep)|u|_{H^2}^2.
\end{align*}
From \eqref{e:smallp} and \eqref{e:zerodom} we conclude that
\begin{equation}\label{e:growthcon}
    |z_\lambda'| \le C(n,\lambda,\ep) z_\lambda.
\end{equation}

So $|\ln(z_\lambda)'|\le C(n,\lambda,\ep) $ and thus $z_\lambda(t)\geq z_\lambda(s_\lambda)e^{-C(n,\lambda,\ep)(t-s_\lambda)}$. But this contradicts with \eqref{e:growthcontrol}. So we know that \eqref{e:zerodom} is false and thus \eqref{e:negdom} is true for all $\lambda$. So by sending $\lambda \to - \infty$, we would know that
\begin{align*}
    \|u\|_{H^p}\leq \lim_{i\to\infty}\Big( \sum_{k+l\le p} \sum_{ \lambda_j >\lambda_i}  |u^{k,l}_j|^2 (t) \Big)^{1/2}=0.
\end{align*}
This implies that $u \equiv 0$ and hence completes the proof. 
\end{proof}

\subsection{Local super-exponential convergence}

We prove the uniform upper bound for the convergence rate of $u$ in any region $\Omega \subset \mathcal{C}_k$. 

\begin{thm}[Theorem \ref{t:local}]
     Let $\tilde{\varphi}:M\times[0,\infty)\to \RR^{n+1}$ be a solution to the rescaled mean curvature flow equation \eqref{e:recaled mcf} and $M_t$ can be expressed as a normal graph of a function $u(p,t)$ over $\mathcal{C}_k=\mathbb{S}^{n-k}_{\sqrt{2(n-k)}}\times \RR^{n-k}$ such that $\lim\limits_{t\to \infty}\|u(\cdot,t)\|_{C^1(\mathcal{C}_k)}= 0$. If $u\not\equiv 0$, then there exists an $\alpha_0$  such that for every open set $\Omega\subset \mathcal{C}_k$
\begin{align*}
    \int_{\Omega}u^2(x,t) \,e^{-\frac{|x|^2}{4}}d\mu \geq C(\Omega,u)e^{-\alpha_0t}.
\end{align*}
where $d\mu$ is the standard measure on $\mathcal{C}_k$. 
\end{thm}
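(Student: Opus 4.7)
The plan is to run the spectral/ODE argument from the proof of Theorem \ref{t:main}, but with the key improvement that the assumption $\|u(\cdot,t)\|_{C^1(\mathcal{C}_k)}\to 0$ lets the size of the quadratic term decay in time; this upgrades the Merle-Zaag dichotomy into a genuine asymptotic expansion for $u$, and the desired lower bound on an arbitrary open $\Omega$ then follows from analyticity of eigenfunctions of $L$. First, by local parabolic regularity (the first lemma in \S 2: if $|u|_{C^1}\le \ep$ then $|u|_{C^p}\le C(n,p)\ep$), the hypothesis $\|u(\cdot,t)\|_{C^1}\to 0$ propagates to every higher derivative, so $\|u(\cdot,t)\|_{C^p(\mathcal{C}_k)}\to 0$ for each $p$. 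Combined with Lemma \ref{l:H^p (u) controls Q} and Proposition \ref{p:u controls Quadratic term in V-norm}, this yields the time-dependent quadratic estimate
\begin{equation*}
   |\mathcal{Q}(\cdot,t)|_{V^p} \le \ep(t)\, |u(\cdot,t)|_{V^p}, \qquad \ep(t)\to 0,
\end{equation*}
so that the ODE system \eqref{e:compe} for the projections $x_{\lambda,\pm},\ x_{\lambda,0}$ from \eqref{general x comp} has coupling constant $\ep(t)\to 0$ rather than a fixed small constant.

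Assume now $u\not\equiv 0$. By Theorem \ref{t:main} the global weighted $L^2$ norm cannot decay super-exponentially, so one may define
\begin{equation*}
   \lambda_* \;:=\; \sup\Big\{\, \lambda\in\mathrm{spec}(L)\ :\ \limsup_{t\to\infty}\, e^{-\lambda t}\, \|u(\cdot,t)\|_{L^2(e^{-|x|^2/4}d\mu)} >0\,\Big\},
\end{equation*}
which is a finite eigenvalue of $L$ by discreteness of the spectrum and the uniform spectral gap of Lemma \ref{l:sepctral gap}. Running the Filippas-Kohn/Merle-Zaag trichotomy at $\lambda=\lambda_*$ with the time-dependent coupling $\ep(t)\to 0$: the $+$-branch (modes with eigenvalue $>\lambda_*$) decays strictly faster than $e^{\lambda_* t}$ by the definition of $\lambda_*$; the $-$-branch (modes with eigenvalue $<\lambda_*$) is forced by Lemma \ref{l:sepctral gap} together with $\ep(t)\to 0$ to decay at least at rate $e^{(\lambda_*-c_0)t}$; and the zero-branch $z_{\lambda_*}(t)$ satisfies $|z_{\lambda_*}(t)|\, e^{-\lambda_* t}\not\to 0$ with asymptotics $z_{\lambda_*}(t)\sim e^{\lambda_* t}\,v_*$ for some nonzero $v_*$ in the $\lambda_*$-eigenspace. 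Consequently,
\begin{equation*}
   e^{-\lambda_* t}\, u(\cdot,t)\ \longrightarrow\ \Phi
   \quad\text{in }L^2(\mathcal{C}_k,\, e^{-|x|^2/4}d\mu),
\end{equation*}
where $\Phi$ is a nonzero eigenfunction of $L$ with eigenvalue $\lambda_*$.

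Next, the eigenfunctions of $L=\Delta-\tfrac12 x\!\cdot\!\nabla +1$ on $\mathcal{C}_k=\mathbb{S}^{n-k}_{\sqrt{2(n-k)}}\times\mathbb{R}^k$ are finite linear combinations of products of Hermite polynomials in the $\mathbb{R}^k$-variables with spherical harmonics on $\mathbb{S}^{n-k}$; hence $\Phi$ is real-analytic on the connected manifold $\mathcal{C}_k$, so in particular its zero set has empty interior. Therefore
\begin{equation*}
    c(\Omega,u)\ :=\ \tfrac12\int_\Omega \Phi^2\, e^{-|x|^2/4}\,d\mu\ >\ 0
\end{equation*}
for every open $\Omega\subset\mathcal{C}_k$. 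Using that the restriction $L^2(\mathcal{C}_k,e^{-|x|^2/4}d\mu)\to L^2(\Omega,e^{-|x|^2/4}d\mu)$ is continuous, combined with the $L^2$-convergence above, we obtain
\begin{equation*}
    \int_\Omega u^2(x,t)\, e^{-|x|^2/4}\,d\mu\ \ge\ c(\Omega,u)\, e^{2\lambda_* t}
\end{equation*}
for all sufficiently large $t$, which gives the claim with $\alpha_0:=-2\lambda_*$ after absorbing the finite-time behavior into the constant.

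The main obstacle is the identification of the asymptotic profile $\Phi\neq 0$ in the second step. The Merle-Zaag/Filippas-Kohn lemma as used in the proof of Theorem \ref{t:main} is a dichotomy with a \emph{fixed} small constant and only produces comparison bounds, not limits; upgrading it to an actual limit requires a time-dependent version in which $\ep(t)\to 0$ is exploited to identify $\lambda_*$ with a precise eigenvalue and produce a nonzero limit vector $v_*$. The uniform spectral gap of Lemma \ref{l:sepctral gap}, independent of $\lambda$, is exactly what forces the higher and lower modes to fall off strictly faster than $e^{\lambda_* t}$ and guarantees that the dominant mode produces a genuinely nonzero profile $\Phi$. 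The analyticity step and the localization to $\Omega$ are then clean consequences.
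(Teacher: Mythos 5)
Your overall strategy—identify a dominant eigenvalue via Theorem~\ref{t:main}, run Merle--Zaag with time-dependent $\ep(t)\to 0$, then pass to an open $\Omega$ using properties of eigenfunctions of the cylindrical $L$—matches the paper's, but two steps do not hold up as written.

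The central gap is your claim that $e^{-\lambda_* t}u(\cdot,t)\to\Phi$ in $L^2$ for some \emph{fixed} nonzero eigenfunction $\Phi$. The hypothesis $\|u(\cdot,t)\|_{C^1}\to 0$ gives a time-dependent smallness $\ep(t)\to 0$ for the quadratic term, but it does \emph{not} give $\int^\infty\ep(t)\,dt<\infty$. Writing the projected ODE for the dominant modes as $w_i'=e^{-\lambda_* t}\mathcal{Q}_i$ with $|w_i'|\lesssim\ep(t)|w|$, the Gronwall bound controls $\log|w|$ only up to $\int\ep(t)\,dt$, which may diverge; the rescaled profile could then drift within the $\lambda_*$-eigenspace or slowly decay and never converge. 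For the same reason your exponent $\alpha_0=-2\lambda_*$ is too strong: the paper only obtains $|z_{\lambda_m}'|\le C(n,\lambda_m,\ep)\,z_{\lambda_m}$ (see \eqref{e:growthcon}), whose Gronwall constant is not $|\lambda_m|$, and so $\alpha_0$ comes out as $2C(n,\lambda_m,\ep)$ rather than $-2\lambda_m$.

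The fix—and this is what the paper does—is to avoid the limit entirely. Decompose $u=u_0+u_1$ where $u_0$ is the projection onto the finite-dimensional $\lambda_m$-eigenspace. The Merle--Zaag conclusions \eqref{e:smallp}, \eqref{e:zerodom}, together with \eqref{e:zerocontrolhigh} and $\ep(t)\to 0$, give $\|u_1\|_{L^2(\mathcal{C}_k)}\le\ep\|u_0\|_{L^2(\mathcal{C}_k)}$ for $t$ large, without needing $u_0$ to have a direction limit. Then one invokes the finite dimension: because eigenfunctions of $L$ (products of Hermite polynomials and spherical harmonics) have the unique continuation property—your analyticity observation—both $\|\cdot\|_{L^2(\mathcal{C}_k)}$ and $\|\cdot\|_{L^2(\Omega)}$ are genuine norms on the eigenspace, hence equivalent: there is $\delta(\Omega)>0$ with $\|v\|_{L^2(\Omega)}\ge\delta(\Omega)\|v\|_{L^2(\mathcal{C}_k)}$ for every $v$ in the eigenspace. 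Taking $\ep<\delta(\Omega)/100$ and expanding $\int_\Omega u^2=\int_\Omega(u_0^2+2u_0u_1+u_1^2)$ gives the lower bound by $(\delta(\Omega)-2\ep)\|u_0\|^2_{L^2(\mathcal{C}_k)}$, and the lower bound on $\|u_0\|$ comes from \eqref{e:largecomp}. The paper also must rule out the alternative branch of the dichotomy (that $x_-$ dominates) by showing it would force decay faster than $e^{(\lambda_{m+2}+2\ep)s}$, contradicting the choice of $m$; this Case~2 analysis is missing from your argument.
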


\begin{proof}

As we have that $\lim\limits_{t\to \infty}\|u(\cdot,t)\|_{C^1(\mathcal{C}_k)}= 0$, combining with Lemma \ref{l:improve} we could know that for $\forall l\in\mathbb{N}$
\begin{align*}
\lim_{t\to\infty}\int_{\mathcal{C}_k}|\na^lu|^2(x,t) e^{-\frac{|x|^2}{4}}d\mu=0.
\end{align*}

As we assume $u\not\equiv 0$, then by Theorem \ref{t:main}, there exists some fixed eigenvalue $\lambda_m \le 0$ such that the following holds:\begin{align}
     \limsup_{s\to \infty} e^{-\lambda_ms} \|u(\cdot,s)\|_{H^{2p}}&=0\\
      \limsup_{s\to \infty} e^{-\lambda_{m+1}s} \|u(\cdot,s)\|_{H^{2p}}&\neq 0.\label{e:lowbound}
\end{align}

Now let us use the proof of \ref{t:main} to see that either
\begin{align}
     &|u|_{H^{2p}} \le C(m,p) |z_{\lambda_m}|, \label{e:first}\\
     & |z_{\lambda_m}(t)| \ge Ce^{-C(n,\lambda_m,\ep)t} \quad\text{for}\,\,t \label{e:largecomp}\,\,\text{sufficiently large},
\end{align}
or
\begin{align}\label{e:sec1}
    &\limsup_{s\to \infty} e^{(-\lambda_{m+1}- \ep)s} \|u(\cdot,s)\|_{H^{2p}} =  0,\\
    &|u|_{H^{2p}} \le C(m,p) |x_{\lambda_{m},-}|.\label{e:sec2}
\end{align}

\textbf{Case 1: \eqref{e:first} \eqref{e:largecomp} are true.}

We denote the projection of $u$ on the eigenspace corresponding to $\lambda_m$  of $L$ as $ u_0$ and denote $u_1=u-u_0$.

By the assumption, combining with \eqref{e:smallp}, \eqref{e:zerodom}, \eqref{e:zerocontrolhigh} and $u\to 0$ in $C^1$, we have that for any $\ep>0$, there exists a time $t(\ep,\lambda_m)$ such that for all $t>t(\ep,\lambda_m)$
\begin{align*}
    \|u_1\|_{L^2(\mathcal{C}_k)}\leq \ep \|u_0\|_{L^2(\mathcal{C}_k)}.
\end{align*}

On the other hand, since the eigenspace corresponding to $\lambda_m$ is a finite-dimensional space and eigen functions of $L$ have unique continuation properties. By a contradiction argument, we have that  $\|\cdot\|_{L^2(\mathcal{C}_k)}$, $\|\cdot\|_{L^2(\Omega)}$ are both norms. Thus, we know that  
\begin{align*}
    \|u_0\|_{L^2(\Omega)}\geq \delta(\Omega,k,m,n)\cdot\|u_0\|_{L^2(\mathcal{C}_k)}.
\end{align*}

Thus we have
\begin{align*}
    \int_{\Omega}u^2(x,t) e^{-\frac{|x|^2}{4}}d\mu=\int_{\Omega}(u_0^2+2u_0u_1+u_1^2) e^{-\frac{|x|^2}{4}}d\mu \geq \|u_0\|^2_{L^2(\mathcal{C}_k)}(\delta(\Omega,k,m,n)-2\ep).
\end{align*}
 So, if we choose $\ep$ to be small enough such that $\ep<\frac{\delta(\Omega,k,m,n)}{100}$, combining with \eqref{e:growthcon} \eqref{e:largecomp} we would have for $t>t(\ep)$
\begin{align*}
    \|u\|_{L^2(\Omega)}\geq{C(u,m,p)\delta(\Omega,k,m,n)}e^{-C(n,\lambda_m,\ep)t}.
\end{align*}
Thus, in this case, Theorem \ref{t:local} is proved.

 \textbf{Case 2: \eqref{e:sec1} \eqref{e:sec2} are true.}

 Now we only need to deal with the second case. If we take
\begin{align*}
    \tilde{x}_{\lambda_{m+1},\ep,+}=&e^{-(\lambda_{m+1}+\ep) t}{x}_{\lambda_{k+1},+},\\
        \tilde{x}_{\lambda_{m+1},\ep,0}=&e^{-(\lambda_{m+1}+\ep) t}{x}_{\lambda_{m+1},0},\\    \tilde{x}_{\lambda_{m+1},\ep,-}=&e^{-(\lambda_{m+1}+\ep)  t}{x}_{\lambda_{m+1},-},
\end{align*}
we could get for $t\geq t(\ep)$
\begin{align*}
    \tilde{x}_{\lambda_{m+1},\ep,+}'\geq& (b_{\lambda_{m+1}}-\ep)\tilde{x}_{\lambda_{m+1},\ep,+}-\ep(\tilde{x}_{\lambda_{m+1},\ep,+}+\tilde{x}_{\lambda_{m+1},\ep,0}+\tilde{x}_{\lambda_{m+1},\ep,-}),\\
      \tilde{x}_{\lambda_{m+1},\ep,-}'\leq &-(b_{\lambda_{m+1}}+\ep)\tilde{x}_{\lambda_{m+1},\ep,-}+\ep(\tilde{x}_{\lambda_{m+1},\ep,+}+\tilde{x}_{\lambda_{m+1},\ep,0}+\tilde{x}_{\lambda_{m+1},\ep,-}),\\
        |\tilde{x}_{\lambda_{m+1},\ep,0}'|\leq &2\ep(\tilde{x}_{\lambda_{m+1},\ep,+}+\tilde{x}_{\lambda_{m+1},\ep,0}+\tilde{x}_{\lambda_{m+1},\ep,-})
\end{align*}
where $b_{\lambda_{m+1}}=\min\{|\lambda-\lambda_{m+1}|\,|\,\text{eigenvalue}\,\,\lambda\neq \lambda_{m+1}\}>c_0$.

As we have that \begin{align*}
    &\limsup_{s\to \infty} e^{(-\lambda_{m+1}- \ep)s} \|u(\cdot,s)\|_{H^{2p}} =  0,
\end{align*}which implies that 
\begin{align*}
    \limsup_{s\to \infty}\tilde{x}_{\lambda_{m+1},\ep,+}=0.
\end{align*} 
we are allowed to use the ODE Lemma \ref{L:Merle-Zaag} again to obtain that either\begin{align*}
     &|u|_{H^{2p}} \le C(m+1,p) |z_{\lambda_{m+1}}|, \\
     & |z_{\lambda_{m+1}}(t)| \ge Ce^{-C(n,\lambda_{m+1},2\ep)t} \quad\text{for}\,\,t \,\,\text{sufficiently large},
\end{align*}
or
\begin{align*}
    &\limsup_{s\to \infty} e^{(-\lambda_{m+2}- 2\ep)s} \|u(\cdot,s)\|_{H^{2p}} =  0,\\
    &|u|_{H^{2p}} \le C(m+1,p) |x_{\lambda_{m+1},-}|.
\end{align*}  
Using the same argument in previous discussions, the first case is not possible. Hence the only possibility is 
\begin{align}
    &\limsup_{s\to \infty} e^{(-\lambda_{m+2}- 2\ep)s} \|u(\cdot,s)\|_{H^{2p}} =  0,\label{e:control}
\end{align}
However, by  taking $\ep\leq \frac{c_0}{10} \le\frac{1}{10}(\lambda_m -\lambda_{m+1})$, \eqref{e:control} would contradict with  \eqref{e:lowbound}. Thus Case 2 is not possible. As we already proved Theorem \ref{t:local} in Case 1, we have finished the proof.

\end{proof}

\section{Counter-examples}

In this section, we construct counterexamples
to the unique continuation problem for the local solutions to the nonlinear parabolic equations.
Our starting point is Tikhonov’s classical counterexample for the heat equation,
which we extend to a nonlinear setting by constructing an analytic family of
solutions via power series expansion.

The underlying difficulty lies in the nonlinear structure of the equation and the ill-posed-ness of the backward equation.
Unlike the linear heat equation, for which Tikhonov’s classical construction
already reveals the failure of backward uniqueness, the nonlinear backward equation introduces strong coupling between the derivatives of $u$,
and no integral representation such as the Duhamel formula can be applied directly.
As a result, we construct solutions by hand, using formal expansions and delicate control of their convergence.

Our approach is based on an power-series construction
combined with a quantitative combinatorial analysis. The nonlinearity creates a hierarchy of nonlinear interactions among the coefficients
in the series, and bounding their growth requires precise counting arguments. We show that, thanks to the quadratic nature of the nonlinear term $\mathcal{Q}(u,\nabla u, D^2u)$, the combinatorial complexity can be dominated by the super-exponential decay
inherited from the base Tikhonov solution of the heat equation. This ensures that the formal series converges, yielding a genuine smooth solution
with the prescribed super-exponential behavior.

For clarity of exposition, we first review the Tikhonov's examples for solutions to heat equation with super-exponential decay and generalize it to arbitrary decay rate. Then we study the nonlinear equation. We first present the proof in the simplest case, where the nonlinearity is quadratic, and then consider the case where the non-linear term only depends on $u'$. We will prove the general case in the next section,

\subsection{Tikhonov's counter-examples}\label{subsection:T}

First we review the examples originally due to Tikhonov \cite{T35}. 
Define a function $\varphi$ on $\RR_+$ as follows:
\[
\varphi(t) = 
\begin{cases}
    e^{-1/t^2}, &t \ge 0 \\
    0, &t <0.
\end{cases}
\]

Then we define 
\begin{equation}\label{e:Tikhonov example}
    u_0(x,t) = \sum_{k=0}^{\infty} \frac{\varphi^{(k)}(t) x^{2k}}{(2k)!}
\end{equation}

It is easy to check that $u$ satisfies the heat equation with $u(\cdot,0) \equiv 0$. Moreover, using Cauchy's integral formula for analytic functions as in \cite{J71}, for any $\ep>0$, there exists some $C_{\ep}>0$ such that
\begin{equation}\label{e:estimate for Tik0}
    |\varphi^{(k)}(t)| \le C_{\ep} \frac{k!}{(\ep t)^k} e^{-\frac{\ep}{t^2}}.
\end{equation}

This implies that
\begin{equation*}
    |u_0(x,t)| \le C_{\ep} e^{-\frac{\ep}{t^2}} \sum_k \frac{k! x^{2k}}{(\ep t)^k (2k)!} \le C_{\ep} e^{-\frac{\ep}{t^2}} \sum_k \frac{x^{2k}}{(\ep t)^kk!} \le C_{\ep} e^{{-\frac{\ep}{t^2}} + \frac{x^2}{\ep t}}.
\end{equation*}

Then for any $R>0$, there exists some $T>0$ such that
Let $P_2 \equiv [-2,2] \times[0, T]$ where $T$ is small enough, we have
\begin{equation}\label{e:Tikhonov estimate}
    \sup_{[-R,R] \times [0,T]} |u_0(x,t)|\le \exp(-\frac{1}{2t^2}).
\end{equation}

We observe that such kind of construction works for general function $\varphi$, with even faster decay rate. Let us set $f_0=t^2$ and $f_{N+1}=e^{-\frac{1}{f_N}}$, $N\geq 0$. From  Lemma \ref{l:fun} we have that 
   \begin{align*}
        | f_N^{(i)}|\leq i!f_N^{1-\ep}(\frac{2}{t}(\frac{8}{\ep})^{N-1}\prod_{j=0}^{N-1}\frac{1}{f_j^\ep})^i  .                                       
    \end{align*}

As in the discussions above, let us denote 
\begin{align*}
    g_N=\frac{2}{t}(\frac{8}{\ep})^{N-1}\prod_{j=0}^{N-1}\frac{1}{f_j^\ep}.
\end{align*}

Then let us consider new Tikhonov type counter-examples with \begin{align*}
    \varphi_N(t)=\left\{\begin{array}{cc}
         f_N(t),&\, t\geq 0 \\
         0,&\, t<0.
    \end{array}\right.
\end{align*}
So we would have that 
\begin{align*}
    |\varphi_N^{(i)}|\leq i!(g_N)^if_N^{1-\ep}.
\end{align*}
Note that for $N=1$, this corresponds to the classical Tikhonov example. We will use this family of examples to construct our solution with prescribed convergence rate of $u$ and expanding rate of the domain.

\subsection{Quadratic nonlinear term}

In this subsection, we will construct the solutions to non-linear backward heat equation, using power series. First we consider the toy model case: 
\begin{enumerate}
    \item the equation is defined on $(-R,R) \times [0,T] \subset \RR \times \RR_+$;
    \item $\vec{\alpha} = \vec{0}$ and $\beta=0$;
    \item $\mathcal{Q}(u,u',u'') = u^2$.
\end{enumerate} 
Hence $u$ is a solution to the following equation in $(-R,R) \times [0,T]$
\begin{align}\label{e:quad}
    \begin{cases}
        (\partial_t + \partial^2_{xx}) u_s &= s \cdot u_s^2, \\
        u_s(x, 0) &= 0.
    \end{cases}
\end{align}

Formally, we expand $u$ in $s,x$ and $t$ to get 
\begin{align*}
    u_s(x,t)=\sum\limits_{j=0}^\infty u^{(j)}(x,t)\frac{s^i}{i!}=\sum\limits_{i,j=0}^\infty u^{(i,j)}(t)\frac{x^i}{i!}\frac{s^j}{j!} \equiv \sum\limits_{i,j=0}^\infty V^{i,j}(t) x^i s^j,
\end{align*}
where for simplicity we define
\begin{equation*}
    V^{i,j}(t) = \frac{u^{(i,j)}(t)}{i!j!}. 
\end{equation*}

We set
\begin{enumerate}
\item For any $j \ge 1$, 
    \begin{equation*}
    u^{(0,j)}(t) = u^{(1,j)} (t) \equiv 0. 
    \end{equation*}
\item For $j = 0$, 
    \begin{align*}
    u^{(2i,0)}(t)=(-1)^i\varphi^{(i)}(t),
    \end{align*}
\end{enumerate}

Recall that 
\[
\varphi(t) = 
\begin{cases}
    e^{-1/t^2}, &t > 0 \\
    0, &t \le0.
\end{cases}
\]
By the estimates \eqref{e:estimate for Tik0} (see \cite{J71}) we have that 
\begin{align}
    |V^{2i,0}|  &= \frac{|u^{(2i,0)}|}{(2i)!}=\frac{|\varphi^{(i)}|}{(2i)!} \leq C_\epsilon \frac{i!}{(2i)!}\frac{1}{(\ep t)^i}e^{-\frac{\epsilon}{t^2}},\\
 |\frac{d^m}{dt^m} V^{2i,0}| &= \frac{|\frac{d^m}{dt^m}u^{(2i,0)}|}{(2i)!}(t)\leq C_\ep \frac{(i+m)!}{(2i)!}\frac{1}{(\ep t)^{i+m}}e^{-\frac{\epsilon}{t^2}},\label{e:derivativecont}
\end{align}
for any $i\geq 0$ and $0<\epsilon<1$.

Plugging the power series to the equation implies
\begin{align*}
    V^{2i+2,j} &= \frac{1}{(2i+2) (2i+1)} \Bigg(  - d_t V^{2i,j} + \sum\limits_{\substack{i_1 +i_2 = i \\ j_1 + j_2=j-1}}  V^{2i_1,j_1} V^{2i_2,j_2} \Bigg).
\end{align*}

Note that
\begin{align*}
    d_t V^{2i,j} &=   \frac{1}{(2i) (2i-1)} d_t \Bigg(  - d_t V^{2i-2,j} + \sum\limits_{\substack{i_1 +i_2 = i-1 \\ j_1 + j_2=j-1}} V^{2i_1,j_1} V^{2i_2,j_2} \Bigg) \\
    &= \frac{1}{(2i) (2i-1)}  \Bigg(  - [d_{t}]^2 V^{2i-2,j} + \sum\limits_{\substack{i_1 +i_2 = i-1 \\ j_1 + j_2=j-1}} \Big(d_t V^{2i_1,j_1} \Big) \cdot V^{2i_2,j_2} +   V^{2i_1,j_1}  \cdot \Big( d_tV^{2i_2,j_2} \Big) \Bigg).
\end{align*}

We can iterate to eliminate the term $[d_t]^{*} V^{*,j}$ using our assumption $V^{0,j} \equiv V^{1,j} \equiv 0$, to obtain the following:
\begin{align}\label{e:u^2 case: expansion of V}
    V^{i,j} =  \sum_{s=0}^{\lfloor i/2 \rfloor -1 } \frac{(i-2-2s)!}{i!}  [-d_t]^{s}\Bigg(\sum\limits_{\substack{ i_1+i_2  = i -2 -2s\\
    j_1 +j_2 =j-1}}   \Big(  V^{i_1,j_1}  \cdot  V^{i_2,j_2} \Big) \Bigg).
\end{align}
It follows directly that for any fixed $m$
\begin{align*}
     [d_t]^m V^{i,j} =  \sum_{s=0}^{\lfloor i/2 \rfloor -1} \frac{(i-2-2s)!}{i!}  [-d_t]^{s+m}\Bigg(\sum\limits_{\substack{ i_1+i_2  = i -2 -2s\\
    j_1 +j_2 =j-1}}   \Big(  V^{i_1,j_1}  \cdot  V^{i_2,j_2} \Big) \Bigg).
\end{align*}

By induction we can easily see that
\begin{enumerate}
    \item $V^{2i+1,j} \equiv 0$ for any $i,j$.
    \item $V^{i,j} \equiv 0$ for any $i < 2j$.
\end{enumerate}

We define
\begin{equation}\label{l:Quad W first decomp}
    W^{2i,j}_m \equiv \frac{(2i)!}{(2i+2m)!} \Big| [d_t]^m\cdot V^{2i.j} \Big|.
\end{equation}

\begin{Lem}
    \begin{align*}
        W^{2i,j}_m \le \sum_{j_1+j_2 = j-1} \sum_{i_1+i_2+m_1+m_2 = i+m-1} W^{2i_1,j_i}_{m_1} \cdot W^{2i_2,j_2}_{m_2}
    \end{align*}
\end{Lem}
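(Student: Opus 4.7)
The plan is to derive the claimed inequality directly from the iterated expansion \eqref{e:u^2 case: expansion of V} by applying $[d_t]^m$, invoking the Leibniz rule, and converting everything back to the $W$-notation. First, since an easy induction shows $V^{\mathrm{odd},\ast}\equiv 0$, the formula \eqref{e:u^2 case: expansion of V} simplifies to
\begin{equation*}
V^{2i,j} \;=\; \sum_{s=0}^{i-1} \frac{(2i-2-2s)!}{(2i)!}\, [-d_t]^s \sum_{\substack{a+b = i-1-s \\ j_1+j_2 = j-1}} V^{2a,j_1}\, V^{2b,j_2}.
\end{equation*}
Applying $[d_t]^m$ and then the triangle inequality (which absorbs the sign $(-1)^s$), I reduce to bounding $\bigl|[d_t]^{s+m}\bigl(V^{2a,j_1}V^{2b,j_2}\bigr)\bigr|$.

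Next, I expand the product derivative by the general Leibniz rule as $m_1+m_2 = s+m$, and use the definition \eqref{l:Quad W first decomp} to write
\begin{equation*}
\bigl|[d_t]^{m_r} V^{2a_r,j_r}\bigr| \;=\; \frac{(2a_r+2m_r)!}{(2a_r)!}\, W^{2a_r,j_r}_{m_r}, \qquad r=1,2.
\end{equation*}
Multiplying the resulting inequality by $(2i)!/(2i+2m)!$ to pass to $W^{2i,j}_m$, and using the identities $2i-2-2s = 2a+2b$ together with $2i+2m = 2(a+b) + 2(m_1+m_2)$ (which follows from $s+m = m_1+m_2$), I set $i_1 = a$ and $i_2 = b$. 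The relation
\begin{equation*}
i_1 + i_2 + m_1 + m_2 \;=\; (i-1-s) + (s+m) \;=\; i+m-1
\end{equation*}
then shows that the indexing agrees exactly with the range on the right-hand side of the claim.

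The remaining point, and the combinatorial heart of the argument, is that the coefficient produced this way is bounded by $1$, i.e.
\begin{equation*}
\binom{m_1+m_2}{m_1}\binom{2a+2b}{2a} \;\le\; \binom{2a+2b+2m_1+2m_2}{2a+2m_1}.
\end{equation*}
I will prove this by two applications of Vandermonde's identity. First, expanding the right-hand side as $\sum_k \binom{2a+2b}{k}\binom{2m_1+2m_2}{2a+2m_1-k}$ and isolating the single term $k=2a$ gives the lower bound $\binom{2a+2b}{2a}\binom{2(m_1+m_2)}{2m_1}$. Second, expanding $\binom{2(m_1+m_2)}{2m_1} = \sum_k \binom{m_1+m_2}{k}\binom{m_1+m_2}{2m_1-k}$ and isolating $k=m_1$ gives $\binom{m_1+m_2}{m_1}^2 \ge \binom{m_1+m_2}{m_1}$, which finishes the inequality.

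The main (though mild) obstacle will be the bookkeeping of factorial ratios in the Leibniz expansion; everything else is a substitution of indices and the two Vandermonde comparisons above. Once the identification $s = m_1 + m_2 - m$ is made, the factorial prefactors collapse into the two binomials displayed in the coefficient inequality, and the claim follows.
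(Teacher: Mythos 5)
Your proposal is correct and follows essentially the same route as the paper's proof: expand via \eqref{e:u^2 case: expansion of V}, apply the Leibniz rule, and control the resulting coefficient by Vandermonde-type binomial comparisons. The only cosmetic difference is in the bookkeeping---the paper separately bounds $\frac{s!}{s_1!s_2!}\le\frac{(2s)!}{(2s_1)!(2s_2)!}$ and then applies a single Vandermonde inequality to the spatial binomials, whereas you collapse all the factorial prefactors into one binomial inequality and prove it by two Vandermonde comparisons.
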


\begin{proof}
Fix any $m$, we can compute 
\begin{align*}
    &\quad [d_t]^m \cdot V^{2i,j} \\ &=\pm \sum_{s=0}^{i-1} \frac{(2i-2-2s)!}{(2i)!} [d_t]^{s+m} \Bigg( \sum\limits_{\substack{i_1 + i_2 = i-1-s \\ j_1+j_2=j-1}} (V^{2i_1,j_1} \cdot V^{2i_2,j_2}) \Bigg) \\
    &= \pm \sum_{s=m}^{i+m-1} \frac{(2i+2m-2-2s)!}{(2i)!} [d_t]^{s} \Bigg( \sum\limits_{\substack{i_1 + i_2 = i+m-1-s \\ j_1+j_2=j-1}} (V^{2i_1,j_1} \cdot V^{2i_2,j_2}) \Bigg) \\
    &=\pm \sum_{s=m}^{i+m-1} \frac{(2i+2m-2-2s)!}{(2i)!} \sum_{s_1+s_2 = s} \sum\limits_{\substack{i_1+i_2=i+m-1-s \\ j_1 + j_2 = j-1}} \frac{s!}{s_1! \cdot s_2!}
    \Big([d_t]^{s_1} V^{2i_1,j_1} \Big) \cdot \Big(  [d_t]^{s_2} V^{2i_2,j_2} \Big) \\
    &\le \frac{(2i+2m-2)!}{(2i)!} \sum_{s=m}^{i+m-1} \sum_{s_1+s_2 = s} \sum\limits_{\substack{i_1+i_2=i+m-1-s \\ j_1+ j_2 = j-1}} \Big( \frac{(2i_1)!}{(2i_1+2s_1)!} \Big| [d_t]^{s_1}V^{2i_1,j_1} \Big| \Big) \cdot \Big( \frac{(2i_2)!}{(2i_2+2s_2)!} \Big| [d_t]^{s_2}V^{2i_2,j_2} \Big| \Big)\\
    &\le \frac{(2i+2m)!}{(2i)!} \sum_{j_1+j_2 =j-1} \sum_{i_1+i_2+m_1+m_2=i+m-1} W^{2i_1,j_1}_{m_1} \cdot W^{2i_2,j_2}_{m_2},
\end{align*}
where in the last inequality we use
\begin{align*}
     &\quad  \frac{(2i_1+2s_1)!}{(2i_1)!} \cdot \frac{(2i_2+2s_2)!}{(2i_2)!} \cdot \frac{s!}{s_1! \cdot s_2!} \\
     &\le  \frac{(2i_1+2s_1)!}{(2i_1)!} \cdot \frac{(2i_2+2s_2)!}{(2i_2)!} \cdot \frac{(2s)!}{(2s_1)! \cdot (2 s_2)!} \\
    &= \binom{2i_1 +2s_1}{ 2s_1} \cdot \binom{2i_2 + 2s_2}{2s_2} \cdot (2s)! \\
    &\le \binom{2i_1+2i_2+2s_1+2s_2 }{2s_1 +2s_2} \cdot (2s)! \\
    &= \frac{(2i+2m-2)!}{(2i+2m-2-2s)! \cdot (2s)!} \cdot (2s)! \\
    &= \frac{(2i+2m-2)!}{(2i+2m-2-2s)!}
\end{align*} 
\end{proof}

We now simplify the upper bound as multiplication of $W^{2i_k,0}_{m_k}$. Counting the multiplicity of each term is equivalent to counting the number of ways of decomposition, which in turn corresponds to counting the number of rooted trees with exactly $j$ branching nodes, where the branching nodes have $ k_m $ branches, for $ 1 \leq m \leq j $. From combinatorics, it is well-known that such number is controlled by the Catalan number. For more discussions on Catlan number, see subsection \ref{sec:cat}. This motivates the following lemma which we prove by induction. A similar idea will later be used to handle the general terms, though the technical details there are considerably more delicate.

\begin{Lem}\label{l:Quad case W estimate}
    \begin{align*}
        W^{2i,j}_m \le Cat(j)\sum_{\sum_{k=0}^j i_k + m_k = i+m-j}  \Big(  \prod_{k=0}^j W^{2i_k,0}_{m_k} \Big),
    \end{align*}
    where $Cat(j)$ is the $j$-th Catalan number.  
    \end{Lem}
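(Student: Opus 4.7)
The plan is to proceed by induction on $j$. The base case $j=0$ is immediate: $Cat(0)=1$ and the only admissible tuple in the sum is $(i_0,m_0)=(i,m)$, so the bound reduces to $W^{2i,0}_m\le W^{2i,0}_m$ with equality.

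For the inductive step, assume the bound holds for every $j'<j$. Starting from the previous lemma,
$$W^{2i,j}_m\le\sum_{j_1+j_2=j-1}\sum_{i_1+i_2+m_1+m_2=i+m-1}W^{2i_1,j_1}_{m_1}\cdot W^{2i_2,j_2}_{m_2},$$
I would apply the inductive hypothesis to each factor. For $a\in\{1,2\}$ it gives
$$W^{2i_a,j_a}_{m_a}\le Cat(j_a)\sum_{\sum_{k=0}^{j_a}(p^{(a)}_k+q^{(a)}_k)\,=\,i_a+m_a-j_a}\ \prod_{k=0}^{j_a}W^{2p^{(a)}_k,0}_{q^{(a)}_k}.$$
Multiplying the two expansions yields $(j_1+1)+(j_2+1)=j+1$ ground-level factors per resulting term, with the total index sum equal to $(i_1+m_1-j_1)+(i_2+m_2-j_2)=(i+m-1)-(j-1)=i+m-j$, matching the index constraint in the target sum. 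The combinatorial prefactor is then $\sum_{j_1+j_2=j-1}Cat(j_1)Cat(j_2)$, which equals $Cat(j)$ by the classical Catalan recurrence, closing the induction.

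Conceptually, iterating the recursion $j$ times corresponds to building a rooted binary tree with $j$ internal nodes and $j+1$ leaves, where each internal node records one application of the quadratic nonlinearity and each leaf carries a ground-level factor $W^{2p_k,0}_{q_k}$ whose indices sum to $i+m-j$. The number of such rooted binary trees is exactly $Cat(j)$, which is the natural source of the Catalan prefactor.

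The main technical subtlety lies in the combinatorial bookkeeping: after substituting the inductive bounds, one must verify that the double sum over the intermediate splitting data $(j_1,j_2)$ and $(i_1,m_1,i_2,m_2)$ is dominated by $Cat(j)$ times the single target sum over ordered $(j+1)$-tuples $(p_k,q_k)$ with $\sum(p_k+q_k)=i+m-j$. Since every $W^{2p,0}_q$ is non-negative, this reduces to carefully matching each decomposition with its corresponding rooted binary tree, so that the multiplicities arising from the expansion are exactly absorbed by the Catalan recurrence rather than creating extraneous growth.
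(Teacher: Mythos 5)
Your proof mirrors the paper's own argument almost line for line: induction on $j$, apply the preceding lemma to split $W^{2i,j}_m$ into a sum of products $W^{2i_1,j_1}_{m_1}W^{2i_2,j_2}_{m_2}$, substitute the inductive hypothesis in each factor, and combine the prefactors via the Catalan convolution $\sum_{j_1+j_2=j-1}Cat(j_1)Cat(j_2)=Cat(j)$. (The paper starts its induction at $j=1$, you at $j=0$; both are admissible, though your $j=0$ reasoning is slightly off: the constraint $i_0+m_0=i+m$ admits many pairs $(i_0,m_0)$, not only $(i,m)$, so it is an inequality by nonnegativity rather than an equality.)

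However, the inductive step --- both in your proposal and in the paper's displayed chain --- contains a genuine combinatorial gap that your final paragraph correctly flags as the "main technical subtlety" but then asserts rather than proves. After substituting the inductive hypothesis, the bound on $W^{2i_a,j_a}_{m_a}$ depends on $(i_a,m_a)$ only through the sum $i_a+m_a$; yet the preceding lemma's outer sum ranges over \emph{all} nonnegative 4-tuples $(i_1,m_1,i_2,m_2)$ with $i_1+i_2+m_1+m_2=i+m-1$. Summing a function of $(i_1+m_1,\,i_2+m_2)$ over that set reproduces each pair of partial sums $(A,B)$ exactly $(A+1)(B+1)$ times, and this multiplicity is \emph{not} absorbed by the Catalan recurrence. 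A concrete check: take $j=2$, $i=3$, $m=0$. Using $W^{2p,0}_q=W^{2(p+q),0}_0$, the asserted bound is $Cat(2)\cdot 6\,W^{2,0}_0(W^{0,0}_0)^2 = 12\,W^{2,0}_0(W^{0,0}_0)^2$, whereas running the $j=1$ case through the preceding lemma (without applying anything to the $j_a=0$ factors) yields $20\,W^{2,0}_0(W^{0,0}_0)^2$. So the assertion that "the multiplicities arising from the expansion are exactly absorbed by the Catalan recurrence" is precisely what needs to be proved, and in this form it is false; a correct argument would have to either carry a weaker, $(i,m)$-dependent prefactor through the induction, or reorganize the recursion so that only the partial sums $i_a+m_a$ (rather than the individual 4-tuples) are summed over.
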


\begin{proof}
    We have proved the case $j=1$ since the $0$-th Catalan number $Cat(0)=1$. We assume that the result is true for any $1\le j<j_0$. Then by previous Lemma we have
\begin{align*}
        W^{2i,j_0}_m &\le \sum_{j_1+j_2 = j_0-1} \sum_{i_1+i_2+m_1+m_2 = i+m-1} W^{2i_1,j_i}_{m_1} \cdot W^{2i_2,j_2}_{m_2} \\
        &\le \sum_{j_1+j_2 = j_0-1} \sum_{\substack{i_1+i_2+m_1+m_2 \\= i+m-1}}  \Big( Cat(j_1) \sum_{\substack{\sum_{k=0}^{j_1}(i^1_k+m^1_k) \\= i_1+m_1-j_1}} (\prod_{k=0}^{j_1} W^{2i^1_k,0}_{m^1_k}) \Big) \cdot \Big( Cat(j_2) \sum_{\substack{\sum_{k=0}^{j_2}(i^2_k+m^2_k) \\= i_2+m_2-j_2}} (\prod_{k=0}^{j_2} W^{2i^2_k,0}_{m^2_k}) \Big) \\
        &\le \sum_{j_1+j_2 =j_0-1} Cat(j_1)\cdot Cat(j_2) \sum_{\sum_{k=0}^{j_0}i_k + m_k = i-j_0} \prod_{k=0}^{j_0} W^{2i_k,0}_{m_k} \\
        &= Cat(j_0) \sum_{\sum_{k=0}^{j_0}i_k + m_k = i-j_0} \prod_{k=0}^{j_0} W^{2i_k,0}_{m_k},
    \end{align*}
where we use the structure result for Catalan number $\sum_{j_1+j_2=j_0-1} Cat(j_1) \cdot Cat(j_2) = Cat(j_0)$. This finishes the proof by induction.
\end{proof}

Recall that we have the estimate for Catalan number $Cat(j)$
\begin{equation*}
    Cat(j) = \frac{1}{j+1} \binom{2j}{j} \le 4^j.
\end{equation*}
In particular, by Lemma \ref{l:Quad case W estimate} and Lemma \ref{l:prod estimate} we have
    \begin{align*}
        V^{2i,j} = W^{2i,j}_0 &\le Cat(j)\sum_{\sum_k i_k + m_k = i-j}  \Big(  \prod_{k=0}^j \frac{(2i_k)!}{(2i_k+2m_k)!} [d_t]^{m_k} \cdot V^{2i_k,0} \Big)\\
        &\le C_{\ep}^{j+1} \frac{e^{-\frac{(j+1)\ep}{t^2}}}{(\ep t)^{i-j}} \sum_{\sum_k i_k + m_k = i-j}  \Big(  \prod_{k=0}^j\frac{(i_k+m_k)!}{(2i_k+2m_k)!} \Big)\\
        &\le  C_{\ep}^{j+1} \frac{e^{-\frac{(j+1)\ep}{t^2}}}{(\ep t)^{i-j}}  C^{i-j} \frac{(j+1)^{(i-j)}}{(i-j)!} \\
        &\le  C_{\ep}^{j+1}  \cdot \frac{e^{-\frac{(j+1)\ep}{t^2}}}{(\ep t)^{i-j}} \cdot   \frac{(C(j+1))^{(i-j)}}{(i-j)!}
    \end{align*}

Then we can evaluate $u_s(x,t)$ at $s=1$: since $V^{2i,j} \equiv 0 $ for any $i<j$, then
\begin{align*}
    \sum_j \sum_{i\ge j} V^{2i,j} x^{2i} &\le \sum_j  C_{\ep}^{j+1} e^{-\frac{(j+1)\ep}{t^2}} x^{2j} \sum_{i\ge j} \frac{(C(j+1))^{i-j}}{(i-j)!} \frac{ x^{2(i-j)}} {(\ep t)^{i-j} } \\
    &\le \sum_j  C_{\ep}^{j+1} e^{-\frac{(j+1)\ep}{t^2}} x^{2j} \exp{(\frac{Cx^2}{\ep t}\cdot (j+1))} \\
    &\le \exp{( -\frac{\ep}{t^2} + \frac{Cx^2}{\ep t})}\sum_j \Bigg(C^2_{\ep}x^2  \exp{( -\frac{\ep}{t^2} + \frac{Cx^2}{\ep t})} \Bigg)^{j}\\ &=\frac{\exp{( -\frac{\ep}{t^2} + \frac{Cx^2}{\ep t})}}{1-C^2_\ep x^2\exp{( -\frac{\ep}{t^2} + \frac{Cx^2}{\ep t})}}.
\end{align*}
We can choose $|x|^2 \le \frac{c(\ep)}{|t|}$ for some small $c(\ep)>0$ so that $C_\ep^2 \frac{c(\ep)}{|t|} e^{-\frac{\ep}{t^2}+ \frac{Cc(\ep)}{\ep t^2}} < 1/2$ for $|t|\leq 1$. Then this series converges. So we can define \begin{align*}
    u_1(x,t)= \sum_{j} \sum_i V^{2i,j} x^{2i}.
\end{align*}
And as $u_1(0,t)=V^{0,0}=e^{-\frac{1}{t^2}}$, we can see that it is a nontrivial solution of \eqref{e:quad} which also has the desired decay rate.

\subsection{Intermediate case}

Let us then consider the case that 
\begin{align}\label{e:simplecase}
    \mathcal{Q}(u,u',u'')&=\mathcal{Q}(u').
\end{align}

Then in this case, we have 
\begin{align*}
   \frac{dV^{(i,j)}}{dt}+(i+1)(i+2)V^{(i+2,j)}=\sum_{i_2 \ge 2} \frac{\mathcal{Q}^{(0,i_2,0)}}{i_2!}(0,\textbf{0},\textbf{0}) \sum\limits_{\substack{\sum\limits_{}i_{2,k_2}=i\\ \sum\limits_{}j_{2,k_2}=j-1}}\prod_{\substack{k_2=1}}^{i_2} V^{(i_{2,k_2}+1,j_{2,k_2})}(t)(i_{2,k_2}+1).
\end{align*}

We set $V^{0,j} = V^{1,j} \equiv 0$ for any $j \ge 1$ as before. Then as \eqref{e:u^2 case: expansion of V} we have
\begin{equation*}
    V^{i,j} = \sum_{i_2 \ge 2}  \frac{\mathcal{Q}^{(0,i_2,0)}}{i_2!}(0,\textbf{0},\textbf{0}) \sum_{s=1}^{[i/2]} \frac{(i-2s)!}{i!}  [-d_t]^{s-1}\Bigg( \sum\limits_{\substack{\sum\limits_{}i_{2,k_2}=i -2s\\ \sum\limits_{}j_{2,k_2}=j-1}}\prod_{\substack{k_2=1}}^{i_2} V^{(i_{2,k_2}+1,j_{2,k_2})}(t)(i_{2,k_2}+1) \Bigg).
\end{equation*}

We define 
\begin{equation*}
    W^{i,j}_m \equiv \frac{i!}{(i+2m-1)!} [d_t]^m \cdot V^{i,j}.
\end{equation*}

\begin{Lem}\label{l:u' case W estimate}
    \begin{equation*}
        W^{i,j}_m \le \sum_{i_2 \ge 2}  C_Q^{i_2}  \cdot \sum_{\substack{\sum i_{2,k_2} + 2m_{k_2} = i+2m -2\\ \sum j_{2,k_2} = j-1}} \prod_{k_2=1}^{i_2} W^{i_{2,k_2}+1,j_{2,k_2}}_{m_{k_2}}.
    \end{equation*}
\end{Lem}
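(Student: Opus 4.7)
The plan is to parallel the derivation of the analogous estimate in the quadratic case (the unnamed lemma preceding Lemma \ref{l:Quad case W estimate}): start from the closed-form expansion of $V^{i,j}$ displayed just before the lemma, apply $[d_t]^m$ to both sides, distribute the derivatives via the multinomial Leibniz rule, and repackage the result in terms of $W^{\cdot,\cdot}_\cdot$ through its defining identity. The analyticity of $\mathcal{Q}$ near the origin controls the Taylor coefficients by $|\mathcal{Q}^{(0,i_2,0)}(0,\mathbf{0},\mathbf{0})|/i_2! \le C_Q^{i_2}$, so the heart of the proof is combinatorial.

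Concretely, first I would take $[d_t]^m$ of the expansion of $V^{i,j}$ and reindex by $s' := s+m-1$, so that the inner differential operator becomes $[d_t]^{s'}$ with $s' \in \{m,\ldots,[i/2]+m-1\}$ and polynomial coefficient $(-1)^{s'-m}(i+2m-2-2s')!/i!$. Second, I would apply the multinomial Leibniz rule to the product $\prod_{k_2=1}^{i_2}(i_{2,k_2}+1) V^{(i_{2,k_2}+1, j_{2,k_2})}$, introducing an inner sum over tuples $(s_{k_2})$ with $\sum_{k_2} s_{k_2}=s'$ weighted by $s'!/\prod s_{k_2}!$. Third, I would rewrite each derivative factor $[d_t]^{s_{k_2}}V^{(i_{2,k_2}+1, j_{2,k_2})} = \frac{(i_{2,k_2}+2s_{k_2})!}{(i_{2,k_2}+1)!} W^{i_{2,k_2}+1, j_{2,k_2}}_{s_{k_2}}$ using the definition of $W$, and absorb the prefactor $(i_{2,k_2}+1)$ from the recursion to obtain $\frac{(i_{2,k_2}+2s_{k_2})!}{i_{2,k_2}!}$. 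After relabeling $m_{k_2}:=s_{k_2}$, the two constraints $\sum_{k_2} s_{k_2}=s'$ and $\sum_{k_2} i_{2,k_2}=i+2m-2-2s'$ collapse into the single constraint $\sum_{k_2}(i_{2,k_2}+2m_{k_2})=i+2m-2$ in the statement of the lemma, while $\sum_{k_2} j_{2,k_2}=j-1$ is inherited directly from the original expansion.

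The remaining task is to verify the scalar inequality
\[
A \;:=\; \frac{(i+2m-2-2s')!}{(i+2m-1)!}\cdot \frac{s'!}{\prod_{k_2} s_{k_2}!}\cdot \prod_{k_2}\frac{(i_{2,k_2}+2s_{k_2})!}{i_{2,k_2}!} \;\le\; \frac{1}{i+2m-1} \;\le\; 1,
\]
which would then render the combinatorial prefactor harmless. I would prove this by combining two elementary bounds: the pointwise inequality $s'!/\prod s_{k_2}! \le (2s')!/\prod (2s_{k_2})!$ (equivalently the combinatorial fact $\binom{s'}{s_1,\ldots,s_{i_2}} \le \binom{2s'}{2s_1,\ldots,2s_{i_2}}$, which holds via the injection $G_k \mapsto \{2j-1,2j : j\in G_k\}$), and the Vandermonde-type inequality $\prod_{k_2}\binom{i_{2,k_2}+2s_{k_2}}{2s_{k_2}} \le \binom{i+2m-2}{2s'}$, which holds because $\sum(i_{2,k_2}+2s_{k_2})=i+2m-2$ and $\sum 2s_{k_2}=2s'$. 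Substituting these into $A$ produces a telescoping cancellation yielding $\frac{(i+2m-2)!}{(i+2m-1)!} = \frac{1}{i+2m-1}$. I expect the main technical obstacle to be precisely this combinatorial bookkeeping, in particular forcing the Vandermonde step to apply at the even exponents $2s_{k_2}$, which is exactly why the preparatory passage from $s_{k_2}!$ to $(2s_{k_2})!$ is needed. Once $A \le 1$ is established, the outer $s'$-sum is absorbed into the $m_{k_2}$-sum via the constraint $\sum_{k_2} m_{k_2}=s'$, and the lemma follows directly.
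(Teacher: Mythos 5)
Your proposal is correct and follows essentially the same path as the paper's proof: apply $[d_t]^m$ to the closed-form expansion of $V^{i,j}$, reindex the $s$-sum, distribute the time-derivatives via the multinomial Leibniz rule, convert back to $W$ via its definition, and close with the combined bound $s'!/\prod s_{k_2}! \le (2s')!/\prod(2s_{k_2})!$ followed by the Vandermonde inequality $\prod\binom{i_{2,k_2}+2s_{k_2}}{2s_{k_2}} \le \binom{i+2m-2}{2s'}$. The only cosmetic difference is that you keep the factor $i!/(i+2m-1)!$ on the left from the start and arrive at the slightly tighter bound $A \le 1/(i+2m-1)$, whereas the paper absorbs it at the end and loosens $(i+2m-2)!$ to $(i+2m-1)!$; both yield the stated inequality.
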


\begin{proof}
By the analyticity of $\mathcal{Q}$ we have $\frac{\mathcal{Q}^{(0,i_2,0)}}{i_2!}(0,\textbf{0},\textbf{0}) \le C_Q^{i_2}$ for any $i_2$. Hence,
\begin{align*}
    [d_t]^m \cdot V^{i,j} \le  \sum_{i_2 \ge 2}  C_Q^{i_2} \sum_{s=0}^{[\frac{i}{2}]-1} \frac{(i-2-2s)!}{i!}  [d_t]^{s+m}\Bigg( \sum\limits_{\substack{\sum\limits_{}i_{2,k_2}=i -2-2s\\ \sum\limits_{}j_{2,k_2}=j-1}}\prod_{\substack{k_2=1}}^{i_2} V^{(i_{2,k_2}+1,j_{2,k_2})}(t)(i_{2,k_2}+1) \Bigg).
\end{align*}

For each $i_2$ we have
\begin{align*}
    &\quad  \sum_{s=0}^{[\frac{i}{2}]-1} \frac{(i-2-2s)!}{i!}  [d_t]^{s+m}\Bigg( \sum\limits_{\substack{\sum\limits_{}i_{2,k_2}=i -2-2s\\ \sum\limits_{}j_{2,k_2}=j-1}}\prod_{\substack{k_2=1}}^{i_2} V^{(i_{2,k_2}+1,j_{2,k_2})}(t)(i_{2,k_2}+1) \Bigg)\\
        &=  \sum_{s=m}^{[\frac{i}{2}]+m-1} \frac{(i+2m-2-2s)!}{i!}  [d_t]^{s}\Bigg( \sum\limits_{\substack{\sum\limits_{}i_{2,k_2}=i+2m -2-2s\\ \sum\limits_{}j_{2,k_2}=j-1}}\prod_{\substack{k_2=1}}^{i_2} V^{(i_{2,k_2}+1,j_{2,k_2})}(t)(i_{2,k_2}+1) \Bigg) \\
        &=  \sum_{s=m}^{[\frac{i}{2}]+m-1} \frac{(i+2m-2-2s)!}{i!}  \sum_{\sum_{k_2} s_{k_2} = s} \sum\limits_{\substack{\sum\limits_{}i_{2,k_2}=i+2m -2-2s\\ \sum\limits_{}j_{2,k_2}=j-1}}  \frac{s!}{\prod_{k_2} s_{k_2}!} \prod_{\substack{k_2=1}}^{i_2} [d_t]^{s_{k_2}}V^{(i_{2,k_2}+1,j_{2,k_2})}(t)(i_{2,k_2}+1) \\
        &\le \frac{(i+2m-2)!}{i!}   \sum_{s=m}^{[\frac{i}{2}]+m-1} \sum_{\sum_{k_2} s_{k_2} = s} \sum\limits_{\substack{\sum i_{2,k_2}=i+2m -2-2s\\ \sum j_{2,k_2}=j-1}} \prod_{k_2 =1}^{i_2} \Big( \frac{(i_{2,k_2}+1)!}{(i_{2,k_2}+2s_{k_2})!} [d_t]^{s_{k_2}}V^{(i_{2,k_2}+1,j_{2,k_2})}(t) \Big) \\
        &\le \frac{(i+2m-1)!}{i!}  \sum_{\substack{\sum i_{2,k_2} + 2s_{k_2} = i+2m -2\\ \sum j_{2,k_2} = j-1}} \prod_{k_2=1}^{i_2} W^{i_{2,k_2}+1,j_{2,k_2}}_{s_{k_2}}.
    \end{align*}
In the first inequality we use
\begin{align*}
    &\quad \prod_{k_2} \frac{(i_{k_2} +2s_{k_2})! }{i_{k_2}!}\frac{s!}{\prod_{k_2} s_{k_2}!} \\
    &\le \prod_{k_2} \frac{(i_{k_2} +2s_{k_2})! }{i_{k_2}!}\frac{(2s)!}{\prod_{k_2} (2s_{k_2})!}\\
    &= \prod_{k_2} \frac{(i_{k_2}+2s_{k_2})!}{i_{k_2}! \cdot (2s_{k_2})!} \cdot (2s)!\\
    &\le \binom{\sum_{k_2} i_{k_2}+ 2 s_{k_2} }{ \sum_{k_2} 2s_{k_2}} \cdot (2s)! \\
    &= \frac{(i+2m-2)!}{(i+2m-2-2s)!}.
\end{align*}

The proof is now finished.
\end{proof}

Note that every time we break each term $W^{i,j}_s$ into more pieces, the sum on indices $j$ will decrease by 1 and the number of the resulting terms $W^{*,*}_*$ will increase by $\alpha_m-1$ for some $\alpha_m \ge 2$, with the sum of the indices $i$ and $s$ decreasing by $1$. Then we can continue until we lead to the base blocks $W^{*,0}_*$. 

Then the proof will be divided into two parts. First we need to count the multiplicity of each combination of base blocks $W^{*,0}_*$. Then we need to prove the estimate for each combination.

To achieve the first goal, we state the decomposition in the following Lemma, where $l$ is used to denoted $\sum_m \alpha_m$. Hence the number of base blocks is equal to $L=l-j+1$.

\begin{Lem}

    \begin{equation*}
        W^{i,j}_s \le  \sum_{L\ge j+1}  C_Q^{L+j-1}  \cdot Cat(L+j) \cdot  \sum_{\sum_{n} i_n + 2s_n = i+2s-j-1} \prod_{n=1}^{L} {W_{s_n}^{i_n+1,0}}
    \end{equation*}
\end{Lem}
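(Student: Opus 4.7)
The plan is to prove this by induction on $j \ge 0$.

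\textbf{Base case} ($j = 0$): The right-hand side contains the $L = 1$ term equal to $C_Q^{0} \cdot Cat(1) \cdot W^{i_1+1, 0}_{s_1}$, where the constraint $i_1 + 2 s_1 = i + 2 s - 1$ forces $(i_1, s_1) = (i-1, s)$. Using $Cat(1) = 1$, this term equals $W^{i,0}_s$, so the inequality holds trivially.

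\textbf{Inductive step}: Assuming the statement for all $j' < j$, I first apply Lemma \ref{l:u' case W estimate} to obtain
\begin{align*}
W^{i,j}_s \le \sum_{i_2 \ge 2} C_Q^{i_2} \sum_{\substack{\sum_k i_{2,k} + 2 s_k = i + 2 s - 2 \\ \sum_k j_{2,k} = j - 1}} \prod_{k=1}^{i_2} W^{i_{2,k}+1, \, j_{2,k}}_{s_k}.
\end{align*}
Since each $j_{2,k} \le j - 1 < j$, the inductive hypothesis applies to each factor. After substitution, the total exponent of $C_Q$ becomes $i_2 + \sum_k (L_k + j_{2,k} - 1) = L + j - 1$, where $L = \sum_k L_k$ and $\sum_k j_{2,k} = j-1$; the combined leaf-index constraint reads $\sum_n (i_n + 2 s_n) = i + 2 s - j - 1$. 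Moreover, since each $L_k \ge j_{2,k} + 1$ (from the induction hypothesis) and $i_2 \ge 2$, one has $L \ge (j-1) + i_2 \ge j + 1$, matching the range $L \ge j+1$ on the RHS.

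With this bookkeeping, the inequality reduces to the combinatorial bound
\begin{align*}
\sum_{i_2 \ge 2} \sum_{\substack{\sum_k L_k = L \\ \sum_k j_{2,k} = j - 1 \\ L_k \ge j_{2,k} + 1}} \prod_{k=1}^{i_2} Cat(L_k + j_{2,k}) \;\le\; Cat(L + j).
\end{align*}
As emphasized before the statement, the left-hand side counts ordered rooted trees with $j$ branching nodes of arity $\ge 2$ and a prescribed leaf profile, and such counts are classically controlled by Catalan numbers. The inequality itself can be established by applying the convolution identity $Cat(N) = \sum_{a + b = N - 1} Cat(a) Cat(b)$ iteratively, rewriting each $i_2$-way split as a sequence of nested binary splits and telescoping, or equivalently by an injection from the associated trees into binary trees with $L + j$ nodes.

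The main obstacle is this Catalan bound: though intuitive from the tree interpretation, the simultaneous constraints $L_k \ge j_{2,k} + 1$, $\sum L_k = L$, $\sum j_{2,k} = j - 1$, together with the free summation over $i_2 \ge 2$, require careful combinatorial handling (presumably addressed in the referenced subsection on Catalan numbers). Once the Catalan inequality is in hand, the remainder of the argument is a routine reindexing of exponents and constraints.
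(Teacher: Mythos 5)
Your proof follows essentially the same route as the paper: apply Lemma~\ref{l:u' case W estimate} to break $W^{i,j}_s$ into blocks of strictly smaller $j$-index, invoke the induction hypothesis on each factor, and then contract the resulting nested sums of Catalan numbers into a single $Cat(L+j)$. The bookkeeping for the exponent of $C_Q$, the leaf-index constraint, and the range $L \ge j+1$ all check out and match the paper's computation.

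Two points of difference and one caveat. First, you anchor the induction at $j=0$ whereas the paper anchors at $j=1$ (there the base case is literally Lemma~\ref{l:u' case W estimate} with $\sum j_{2,k_2}=0$). Your $j=0$ base case is fine in conclusion, but the phrase that the constraint $i_1 + 2s_1 = i + 2s - 1$ \emph{forces} $(i_1,s_1) = (i-1,s)$ is not accurate; there are many nonnegative integer solutions, and the point is simply that one of the summands equals $W^{i,0}_s$ while the rest are nonnegative. Second, and more importantly, the Catalan inequality
\[
\sum_{\alpha \ge 2} \sum_{\substack{\sum_k L_k = L,\ \sum_k j_{2,k} = j - 1 \\ L_k \ge j_{2,k} + 1}} \prod_{k=1}^{\alpha} Cat(L_k + j_{2,k}) \;\le\; Cat(L + j)
\]
is the only nontrivial step and you leave it at the level of a plausible sketch. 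The paper establishes it via the variable-branching recursion (Lemma~\ref{l:Catalan variable branching}), first relaxing the pair of constraints $\sum L_k = L$, $\sum j_{2,k}=j-1$ to the single constraint $\sum(L_k+j_{2,k}) = L+j-1$ and then applying that recursion. Note that the constraint $L_k \ge j_{2,k}+1$ (which you correctly record) is genuinely needed: if you drop it and sum over all nonnegative $(L_k,j_{2,k})$ with the two constraints, the count can exceed $Cat(L+j)$ because distinct pairs with the same $n_k = L_k + j_{2,k}$ are counted separately. Your sketch via iterated binary convolution would need to incorporate this restriction carefully; as stated, this is the one substantive gap relative to a complete proof.
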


\begin{proof}
    We will prove the Lemma by induction. The base case $j=1$ is finished by the previous Lemma \ref{l:u' case W estimate}. Then we assume the result is true for any $j<j_0$. Therefore, by Lemma \ref{l:u' case W estimate}
    \begin{align*}
        W^{i,j_0}_s &\le \sum_{\alpha \ge 2}  C_Q^{\alpha}  \cdot \sum_{\substack{\sum i_{k} + 2s_{k} = i+2s -2\\ \sum j_{k} = j_0-1}} \prod_{k=1}^{\alpha} W^{i_{k}+1,j_{k}}_{s_{k}} \\
        &\le \sum_{\alpha \ge 2}  C_Q^{\alpha}  \cdot \sum_{\substack{\sum i_{k} + 2s_{k} = i+2s -2\\ \sum j_{k} = j_0-1}} \prod_{k=1}^{\alpha} \Bigg( \sum_{L_k \ge j_k+1}  C_Q^{L_k+j_k-1} \cdot Cat(L_k+j_k)  \cdot \sum_{\sum_{n} i_{k,n} + 2s_{k,n} = i_k+2s_k-j_k} \prod_{n=1}^{L_k} {W_{s_{k,n}}^{i_{k,n}+1,0}} \Bigg)
    \end{align*}

Set $L \equiv \sum_{k=1}^{\alpha} L_k$. Since each $L_k \ge j_k+1 \ge 1$, we have $2\le\alpha \le L$ and $L \ge \sum_{k=1}^{\alpha} (j_k+1) \ge j_0 -1 +\alpha \ge j_0 +1$. Hence, if we sum on $L$ then
\begin{align*}
    W^{i,j_0}_s &\le \sum_{L \ge j_0+1} C_Q^{L +j_0 -1} \sum_{\alpha \ge 2}^L \Big( \sum_{\substack{\sum_k L_k = L \\ \sum_k j_k = j_0 - 1}} \prod_{k=1}^{\alpha} Cat(L_k+j
    _k) \Big) \cdot \Big(\sum_{\sum_n i_n +2 s_n = i+2s-j_0-1}\prod_{n=1}^{L} W^{i_n+1,0}_{s_n} \Big) \\
    &\le \sum_{L \ge j_0+1} C_Q^{L +j_0 -1} Cat(L+J) \cdot \Big(\sum_{\sum_n i_n +2 s_n = i+2s-j_0-1}\prod_{n=1}^{L} W^{i_n+1,0}_{s_n} \Big),
\end{align*}
where in the last inequality we use the recursive formula for Catalan numbers, see Lemma \ref{l:Catalan variable branching}:
\begin{align*}
    \sum_{\alpha \ge 2}^L \Big( \sum_{\substack{\sum_k L_k = L \\ \sum_k j_k = j_0 - 1}} \prod_{k=1}^{\alpha} Cat(L_k+j
    _k) \Big) \le \sum_{\alpha \ge 2}^{L+j_0-1} \Big( \sum_{\sum_k (L_k+j_k) = L +j_0-1 } \prod_{k=1}^{\alpha} Cat(L_k+j
    _k) \Big) \le Cat(L+j_0).
\end{align*}
\end{proof}

Then let we estimate each $W^{i+1,0}_s$. Since $V^{i+1,0}=0$ if $i+1$ is odd, we can assume $i+1$ is even.
\begin{align*}
    W^{i+1,0}_s &= \frac{(i+1)!}{(i+2s)!} [d_t]^sV^{i+1,0} \\
    &\le \frac{(i+1)!}{(i+2s)!}  \cdot \Big( \frac{(\frac{i+1}{2}+s)!}{(i+1)!} C_{\epsilon} \frac{1}{(\ep t)^{\frac{i+1}{2}+s}} e^{-\frac{\epsilon}{t^2}} \Big) \\
    &\le \frac{C_{\ep} e^{-\frac{\epsilon}{t^2}} }{(\ep t)^{\frac{i+1}{2}+s}} \cdot \frac{(\frac{i+1}{2}+s)!}{(i+2s)!}.
\end{align*}

Then by Lemma \ref{l:prod estimate}, noting that $\sum_{n} \frac{i_n+1}{2} + s_n = \frac{i-j-1 + (l-j+1)}{2} = \frac{i+l-2j}{2}$,
\begin{align*}
    \prod_{n=1}^{l-j+1}  \frac{(\frac{i_n+1}{2}+s_n)!}{(i_n+2s_n)!} \le C^{i+l-2j} \cdot \frac{(l-j+1)^{\frac{i+l-2j}{2}}}{[\frac{i+l-2j}{2}]!}.
\end{align*}
and thus
\begin{align*}
    \sum_{\sum_n i_n +2 s_n = i-j-1}\prod_{n=1}^{l-j+1} W^{i_n+1,0}_{s_n} &\le \sum_{\sum_n i_n +2 s_n = i-j-1}\prod_{n=1}^{l-j+1} \Big( \frac{C_{\ep} e^{-\frac{\epsilon}{t^2}} }{(\ep t)^{\frac{i_n+1}{2}+s_n}} \cdot \frac{(\frac{i_n+1}{2}+s_n)!}{(i_n+2s_n)!} \Big)
    \\
    &\le \frac{C_{\ep}^{l-j+1} e^{-\frac{\ep}{t^2} \cdot (l-j+1)}}{(\ep t)^{\frac{i+l-2j}{2}}} \cdot \sum_{\sum_n i_n +2 s_n = i-j-1}\prod_{n=1}^{l-j+1}  \frac{(\frac{i_n+1}{2}+s_n)!}{(i_n+2s_n)!} \\
\end{align*}

Since $l \ge 2j$, we can define $k=l-2j \ge 0$ to simplify the series

\begin{align*}
    \sum_{j} \sum_i V^{i,j} x^i &= \sum_{j} \sum_i \frac{x^i}{i}W^{i,j}_0 \\
    &\le \sum_{j} \sum_i \sum_{l \ge 2j} C^l_Q \cdot Cat(l+1) \cdot Cat(j) \cdot |x|^i \cdot  \sum_{\sum_n i_n +2 s_n = i-j-1}\prod_{n=1}^{l-j+1} W^{i_n+1,0}_{s_n} \\
    &\le \sum_{j} C^j \sum_{k\ge 0} C^{k} e^{-\frac{\ep}{t^2}(j+k+1)} \sum_i  \frac{(C|x|)^{i}}{(\ep t)^{\frac{i+k}{2}}} \cdot \frac{(j+k+1)^{\frac{i+k}{2}}}{[\frac{i+k}{2}]!}.
\end{align*}

We simply consider $|x|\ge 1$ so that $(C|x|)^i \le (C|x|)^{i+k}$. Otherwise we just use the trivial bound $(C|x|)^i \le C^{i+k}$. Therefore we have
\begin{align*}
    \sum_{j} \sum_i V^{i,j} x^i &\le \sum_{j} C^j \sum_{k\ge 0} C^{k} e^{-\frac{\ep}{t^2}(j+k+1)} e^{\frac{C(j+k+1)|x|^2}{\ep t}} \\
    &\le   C e^{-\frac{\ep}{t^2}+ \frac{C|x|^2}{\ep t} }\sum_j \Big( C e^{-\frac{\ep}{t^2}+ \frac{C|x|^2}{\ep t} }\Big)^j \cdot \sum_k \Big( C e^{-\frac{\ep}{t^2}+ \frac{C|x|^2}{\ep t} }\Big)^k
\end{align*}

We can choose $|x|^2 \le \frac{c}{|t|}$ for some small $c(Q)>0$ so that $C e^{-\frac{\ep}{t^2}+ \frac{C|x|^2}{\ep t}} < 1/2$. Then this series converges. So we can define \begin{align*}
    u_1(x,t)= \sum_{j} \sum_i V^{i,j} x^i.
\end{align*}
And as $u_1(0,t)=V^{0,0}=e^{-\frac{1}{t^2}}$, we can see that it is a nontrivial solution of \eqref{e:simplecase}.

\section{General construction}

In this section, we prove the Theorem \ref{t:Tikhonov}, where we construct the solutions to the following general equation
\begin{align}
    \partial_t u_s + u_s'' = s \cdot t^{\theta_0} \mathcal{Q}(t^{\theta_1} u, t^{{\theta_2}}u',t^{\theta_3} u''),\label{e:perturbequ}
\end{align}
where $\mathcal{Q}$ is analytic around $(0,0,0)$ with $\mathcal{Q}(0,0,0) = |\nabla \mathcal{Q}(0,0,0)| = 0$. In particular, we have $|\frac{\mathcal{Q}^{(i_0,i_1,i_2)}}{i_0!i_1!i_2!}(0,0,0) |\le C_Q^{i_0+i_1+i_2}$ for some constant $C_Q$.  In the first subsection, we construct the solutions out of the standard Tikhonov example with decay rate $e^{-\frac{1}{t^2}}$. We will generalize this in the second subsection, where we obtain the solutions with arbitrary decay rate.

\subsection{Special region}

Similarly as before, we set the following.
\begin{align*}
    u_s(x,t)=\sum\limits_{j=0}^\infty u^{(j)}(x,t)\frac{s^i}{i!}=\sum\limits_{i,j=0}^\infty u^{(i,j)}(t)\frac{x^i}{i!}\frac{s^j}{j!},
\end{align*}
with $V^{i,j}(t) = \frac{u^{(i,j)}}{i!j!}(t)$. Moreover, $V^{0,j} = V^{1,j} = 0$ for any $j \ge 1$. For $j = 0$, we set
    \begin{align*}
    u^{(2i,0)}(t)=(-1)^i\varphi^{(i)}(t),
    \end{align*}
with
\begin{equation*}
    \varphi(t) = 
\begin{cases}
    e^{-1/t^2}, &t > 0 \\
    0, &t \le0.
\end{cases}
\end{equation*}

By comparing the coefficients, we have the following equation 
\begin{align*}
    &\frac{d  V^{(i,j)}}{dt} + (i+1)(i+2)  V^{(i+2,j)}=  \sum_{i_0,i_1,i_2\geq 0}\frac{\mathcal{Q}^{(i_0,i_1,i_2)}}{i_0!i_1!i_2!}(0,\textbf{0},\textbf{0}) \cdot t^{\theta_0 + \theta_1 i_0 + \theta_2 i_1 +\theta_3 i_2}\\ &  \sum_{\substack{\sum_{m}\sum_{k_m} i_{m,k_m}=i\\ \sum_{m}\sum_{k_m} j_{m,k_m}=j-1}} \Big( \prod_{k_0=1}^{i_0}  V^{(i_{0,k_0},j_{0,k_0})} \Big) \cdot \Big( \prod_{k_1=1}^{i_1}   V^{(i_{1,k_1}+1,j_{1,k_1})}  (i_{1,k_1}+1)  \Big) \cdot \Big( \prod_{k_2=1}^{i_2} V^{(i_{2,k_2}+2,j_{2,k_2})}  (i_{2,k_2}+2)(i_{2,k_2}+1) \Big).
\end{align*}

Then similarly as in the simple case, we can write $V^{i,j}$ as follows
\begin{align*}
    V^{i,j} &= \sum_{i_0,i_1,i_2\geq 0}\frac{\mathcal{Q}^{(i_0,i_1,i_2)}}{i_0!i_1!i_2!}(0,\textbf{0},\textbf{0}) \cdot \sum_{s=0}^{[\frac{i}{2}]-1} \frac{(i-2-2s)!}{i!} [-d_t]^{s} 
    \Bigg( t^{-\Theta} \cdot \sum_{\substack{\sum_{m}\sum_{k_m} i_{m,k_m}=i -2 -2s\\ \sum_{m}\sum_{k_m} j_{m,k_m}=j-1}} \Big(  \prod_{k_0=1}^{i_0}  V^{(i_{0,k_0},j_{0,k_0})} \Big) \\
    &\Big(  \prod_{k_1=1}^{i_1}   V^{(i_{1,k_1}+1,j_{1,k_1})}  (i_{1,k_1}+1)  \Big) \cdot \Big( \prod_{k_2=1}^{i_2} V^{(i_{2,k_2}+2,j_{2,k_2})}  (i_{2,k_2}+2)(i_{2,k_2}+1) \Big) \Bigg).
\end{align*}
For simplicity we write $\Theta \equiv - (\theta_0 + \theta_1 i_0 + \theta_2 i_1 + \theta_3 i_2)$.

In the following we consider the small time interval, say $0<t\le 1$. 
Since we want to find an upper bound for $|V^{i,j}|$, we could simplify the notation by considering the case $\theta_i<0$ and thus $\Theta > 0$. Since $i_0+i_1+i_2 \ge 2$, we can also choose some large $\theta>0$ such that $\Theta \le \theta(i_0+i_1+i_2).$

We define 
\begin{equation*}
    W^{i,j}_s \equiv \frac{i!}{(i+2s-2)!} \Big| [d_t]^s V^{i,j} \Big|.
\end{equation*}

\begin{Lem}\label{l:general case W estimate}
    \begin{equation*}
         W^{i,j}_n \le \sum_{\substack{i_0,i_1,i_2\geq 0\\ i_0+i_1+i_2 \ge 2}} (\frac{C}{t})^{\theta(i_0+i_1+i_2)} \cdot \sum_{\substack{2s_0 + \sum_{m}\sum_{k_m} i_{m,k_m} + 2s_{m,k+m} =i +2n -2\\ \sum_{m}\sum_{k_m} j_{m,k_m}=j-1}} \Big( \frac{s_0!\cdot C^{s_0}}{(2s_0)!\cdot t^{s_0}} \cdot \prod_{m,k_m} W^{i_{m,k_m}+m, j_{m,k_m}}_{s_{m,k_m}} \Big).
    \end{equation*}
\end{Lem}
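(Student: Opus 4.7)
The proof will follow the same template as the intermediate case (Lemma \ref{l:u' case W estimate}), but must incorporate two new features of the general equation \eqref{e:perturbequ}: the explicit prefactor $t^{-\Theta}$ (with $\Theta = -(\theta_0+\theta_1 i_0+\theta_2 i_1+\theta_3 i_2) \le \theta(i_0+i_1+i_2)$), and the analytic nonlinearity $\mathcal{Q}$ which couples an arbitrary number $N_V := i_0+i_1+i_2$ of $V$-factors with mixed derivative orders $m \in \{0,1,2\}$.

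First I would apply $[d_t]^n$ to the explicit recursion for $V^{i,j}$ displayed just above the lemma and reindex the outer summation by $r = s+n$, so it becomes $\sum_{r=n}^{[i/2]+n-1}\frac{(i+2n-2-2r)!}{i!}\,[d_t]^{r}\bigl(t^{-\Theta}\prod V^{i_{m,k_m}+m,j_{m,k_m}}\bigr)$. Then a binary Leibniz rule splits $r = s_0+s'$, with $s_0$ derivatives landing on $t^{-\Theta}$ and weight $\binom{r}{s_0}$; a multivariate Leibniz rule distributes the remaining $s'$ derivatives among the $V$-factors as $\{s_{m,k_m}\}$ with $\sum s_{m,k_m}=s'$ and weight $\frac{s'!}{\prod s_{m,k_m}!}$. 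These combine to the overall coefficient $\frac{r!}{s_0!\prod s_{m,k_m}!}$, and the constraint $\sum i_{m,k_m} = i+2n-2-2r$ automatically becomes the desired $2s_0+\sum(i_{m,k_m}+2s_{m,k_m})=i+2n-2$.

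Next I would estimate the three kinds of factors separately. The Taylor coefficient satisfies $|\mathcal{Q}^{(i_0,i_1,i_2)}/(i_0!i_1!i_2!)|\le C_Q^{N_V}$ by analyticity. For $t^{-\Theta}$, the identity $[d_t]^{s_0}t^{-\Theta}=\pm\Theta(\Theta+1)\cdots(\Theta+s_0-1)\,t^{-\Theta-s_0}$ together with the elementary bound $\Theta(\Theta+1)\cdots(\Theta+s_0-1)\le s_0!\cdot 2^{\Theta+s_0}$ gives $|[d_t]^{s_0}t^{-\Theta}|\le s_0!\,C^{s_0}t^{-\Theta-s_0}$; the residual $t^{-\Theta}\le (C/t)^{\theta N_V}$ provides the stated prefactor $(C/t)^{\theta(i_0+i_1+i_2)}$, and $2^\Theta$ is absorbed into this same power. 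Each $|[d_t]^{s_{m,k_m}}V^{i_{m,k_m}+m,j_{m,k_m}}|$ is then rewritten via the definition of $W$ as $\frac{(i_{m,k_m}+m+2s_{m,k_m}-2)!}{(i_{m,k_m}+m)!}\,W^{i_{m,k_m}+m,j_{m,k_m}}_{s_{m,k_m}}$.

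To reach the clean final form, I will collect the remaining factorial weights and apply the multinomial inequality $\frac{r!}{s_0!\prod s_{m,k_m}!}\le\frac{(2r)!}{(2s_0)!\prod(2s_{m,k_m})!}$ (the iterated binary version of $\binom{a}{b}\le\binom{2a}{2b}$), then multiply by $\frac{i!}{(i+2n-2)!}$ to build $W^{i,j}_n$. The combination $\frac{(i+2n-2-2r)!\,(2r)!}{(i+2n-2)!}\cdot\prod\frac{(i_{m,k_m}+m+2s_{m,k_m}-2)!}{(i_{m,k_m}+m)!\,(2s_{m,k_m})!}$ should collapse to a geometric constant to the power $N_V$, leaving only the isolated factor $\frac{s_0!\,C^{s_0}}{(2s_0)!\,t^{s_0}}$ in front of the product of $W$'s. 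The hardest part will be precisely this last bookkeeping step: tracking the offset $+m$ that appears inside the $V$-superscripts (inherited from the $m$-th derivative arguments of $\mathcal{Q}$) while absorbing every remaining factorial ratio into a single universal constant $C$, verifying that no factor worse than $(C/t)^{\theta N_V}$ and $\frac{s_0!C^{s_0}}{(2s_0)!t^{s_0}}$ survives outside $\prod W^{i_{m,k_m}+m,j_{m,k_m}}_{s_{m,k_m}}$.
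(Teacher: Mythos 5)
Your proposal follows the paper's proof essentially step by step: the reindexed Leibniz expansion with coefficient $\frac{r!}{s_0!\prod s_{m,k_m}!}$, the bound $|[d_t]^{s_0}t^{-\Theta}|\le s_0!\,C^{s_0}\,t^{-\Theta-s_0}$ with $2^{\Theta}$ and $t^{-\Theta}$ absorbed into $(C/t)^{\theta(i_0+i_1+i_2)}$, the conversion to $W$-terms, and the key multinomial inequality $\frac{r!}{s_0!\prod s_{m,k_m}!}\le\frac{(2r)!}{(2s_0)!\prod(2s_{m,k_m})!}$ are all exactly what the paper uses, and the final factorial bookkeeping closes via Vandermonde's convolution, exactly as in the paper's stated claim. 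One small slip in your last display: the derivative coefficients $(i_{m,k_m}+1)\cdots(i_{m,k_m}+m)$ from expanding $\nabla^m u$ in the $x$-power series must be included alongside the $W$-normalization, so the product is really $\prod\frac{(i_{m,k_m}+m+2s_{m,k_m}-2)!}{i_{m,k_m}!\,(2s_{m,k_m})!}\le\prod\binom{i_{m,k_m}+2s_{m,k_m}}{2s_{m,k_m}}$; after Vandermonde the whole combination is $\le 1$, so no extra geometric factor is actually needed for this part of the bookkeeping.
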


\begin{proof}
    We follow the proof of Lemma \ref{l:Quad W first decomp}. 

    We use $|\frac{Q^{(i_0,i_1,i_2)}}{i_0!i_1!i_2!}(0,\textbf{0},\textbf{0})| \le C^{i_0+i_1+i_2}$ to obtain
\begin{align*}
    &[d_t]^n\cdot V^{i,j} \le \sum_{i_0,i_1,i_2\geq 0} C^{i_0+i_1+i_2} \cdot \sum_{s=0}^{[\frac{i}{2}]-1} \frac{(i-2-2s)!}{i!} \Bigg| [-d_t]^{s+n} 
    \Bigg( t^{-\Theta} \cdot \sum_{\substack{\sum_{m}\sum_{k_m} i_{m,k_m}=i -2 -2s\\ \sum_{m}\sum_{k_m} j_{m,k_m}=j-1}} \Big( \prod_{k_0=1}^{i_0}  V^{(i_{0,k_0},j_{0,k_0})} \Big) \\
    &\quad \quad \Big( \prod_{k_1=1}^{i_1}   V^{(i_{1,k_1}+1,j_{1,k_1})}  (i_{1,k_1}+1)  \Big) \cdot \Big( \prod_{k_2=1}^{i_2} V^{(i_{2,k_2}+2,j_{2,k_2})}  (i_{2,k_2}+2)(i_{2,k_2}+1) \Big) \Bigg) \Bigg| \\
    &= \sum_{i_0,i_1,i_2\geq 0} C_Q^{i_0+i_1+i_2} \cdot \sum_{s=n}^{[\frac{i}{2}]+n-1}  \sum_{\substack{\sum_{m}\sum_{k_m} i_{m,k_m}=i +2n -2 -2s\\ \sum_{m}\sum_{k_m} j_{m,k_m}=j-1 \\ s_0 + \sum_m \sum_{k_m} s_{m,k_m} = s}}    \frac{(i+2n-2-2s)!}{i!} \cdot  \frac{s!}{s_0! \prod s_{m,k_m}!}  \Bigg|  \Big( [d_t]^{s_0} \cdot t^{-\Theta} \Big) \\
    & \Big(  \prod_{k_0=1}^{i_0} [d_s]^{s_{0,k_0}}  V^{(i_{0,k_0},j_{0,k_0})} \Big)  \Big( \prod_{k_1=1}^{i_1} [d_s]^{s_{1,k_1}}  V^{(i_{1,k_1}+1,j_{1,k_1})}  (i_{1,k_1}+1)  \Big) \Big( \prod_{k_2=1}^{i_2} [d_s]^{s_{2,k_2}}V^{(i_{2,k_2}+2,j_{2,k_2})}  (i_{2,k_2}+2)(i_{2,k_2}+1) \Big) \Bigg|. 
\end{align*}

We claim that:
\begin{align*}
    \frac{(i+2n-2-2s)!}{i!}  \cdot \frac{s!}{s_0! \prod s_{m,k_m}!}  \le \frac{(i+2n-2)!}{i!} \cdot \frac{1}{(2s_0)!} \cdot \prod \frac{i_{m,k_m}!}{(i_{m,k_m}+2s_{m,k_m})!}.
\end{align*}

With this claim we can easily prove that
\begin{equation*}
    W^{i,j}_n \le \sum_{\substack{i_0,i_1,i_2\geq 0\\ i_0+i_1+i_2 \ge 2}} C_Q^{i_0+i_1+i_2} \cdot \sum_{\substack{2s_0 + \sum_{m}\sum_{k_m} i_{m,k_m} + 2s_{m,k+m} =i +2n -2\\ \sum_{m}\sum_{k_m} j_{m,k_m}=j-1}} \Big( \frac{[d_t]^{s_0} \cdot t^{-\Theta}}{(2s_0)!} \cdot \prod_{m,k_m} W^{i_{m,k_m}+m, j_{m,k_m}}_{s_{m,k_m}} \Big).
\end{equation*}

Then we use the following inequality: Since $\Theta > 0$
$$ 
\frac{[d_t]^{s_0} \cdot t^{-\Theta}}{(2s_0)!} \le \frac{(\Theta+s_0-1)! }{(\Theta-1)!(2s_0)!t^{\Theta+s_0}} \le \frac{s_0!}{(2s_0)!} (\frac{C}{t})^{\Theta+s_0} .
$$

The the proof will be finished if we prove the claim as follows:
\begin{align*}
    &\quad \prod \frac{(i_{m,k_m}+2s_{m,k_m})!}{i_{m,k_m}!} \cdot \frac{s!}{s_0! \prod s_{m,k_m}!} \\
    &\le \prod \frac{(i_{m,k_m}+2s_{m,k_m})!}{i_{m,k_m}!} \cdot \frac{(2s)!}{(2s_0)! \prod (2s_{m,k_m})!} \\
    &= \prod \binom{i_{m,k_m}+2s_{m,k_m}}{ 2s_{m,k_m}}  \cdot \frac{(2s)!}{(2s_0)!} \\
    &\le \binom{\sum i_{m,k_m}+2s_{m,k_m} }{ \sum 2s_{m,k_m}} \cdot \frac{(2s)!}{(2s_0)!} \\
    &\le \binom{i+2n-2} {2s} \cdot \frac{(2s)!}{(2s_0)!} \\
    &= \frac{(i+2n-2)!}{(i+2n-2-2s)! (2s_0)!}.
\end{align*}
\end{proof}

Then we can decompose $W^{i,j}_n$ into the base blocks $W^{*,0}_*$.

\begin{Lem}\label{l:example general decomp}
For any $v=0,1,2$, we have
\begin{align*}
     W^{I+v,J}_N &\le \sum_{L \ge 2J} (\frac{C}{t})^{\theta\cdot L}   \sum_{\substack{ L_0+L_1+L_2 = L \\ J_0 +J_1 +J_2 = J-1  \\ \sum_n 2s_n + \sum_{m}\sum_{k_m} i_{m,k_m} +  2s_{m,k_m} \\=I +2N +v -2 -2J_0-J_1}} T_{L,J,p} \cdot \Big( \prod_{n=1}^{J} \frac{s_n!C^{s_n}}{(2s_n)!t^{s_n}} \cdot \prod_{m=0}^2 \prod_{k_m}^{L_m-J_m} W^{i_{m,k_m}+m, 0}_{s_{m,k_m}} \Big),
\end{align*}
where $T_{L,J;p} = Cat(L+1) \cdot Cat(J) \cdot \frac{L!}{L_0!L_1!L_2!} \cdot \frac{(J-1)!}{J_0!J_1!J_2!}$ is a constant.
\end{Lem}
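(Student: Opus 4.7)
The plan is to prove this decomposition by induction on $J$, following the same strategy used in Lemma~\ref{l:Quad case W estimate} and its intermediate analogue in Section 4.3, but now keeping track of the three different kinds of base blocks $W^{i+m,0}_s$ for $m=0,1,2$ coming from the three slots of $\mathcal{Q}(u,u',u'')$, as well as the time-derivative factors produced by $[d_t]^{s_0}\cdot t^{-\Theta}$.

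The base case $J=1$ is essentially Lemma~\ref{l:general case W estimate}: one application of the recursion already writes $W^{I+v,1}_N$ as a sum over $(i_0,i_1,i_2)$ of products of $W^{i_{m,k_m}+m,0}_{s_{m,k_m}}$, with a factor $\tfrac{s_0!C^{s_0}}{(2s_0)!t^{s_0}}$ from the derivatives on $t^{-\Theta}$; the prefactor $(C/t)^{\theta L}$ with $L=i_0+i_1+i_2$ then absorbs $C_Q^{i_0+i_1+i_2}$ and the time factor, and the trivial Catalan factors $Cat(2)\cdot Cat(1)$ and multinomial $\binom{L}{L_0,L_1,L_2}$ with $L_m = i_m$ and $J_0=J_1=J_2=0$ are simply equal to $1$.

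For the inductive step, assume the estimate for all $J'<J$ and apply Lemma~\ref{l:general case W estimate} once to $W^{I+v,J}_N$. Each resulting inner block $W^{i_{m,k_m}+m,\,j_{m,k_m}}_{s_{m,k_m}}$ with $j_{m,k_m}\ge 1$ is then expanded by the induction hypothesis, while blocks with $j_{m,k_m}=0$ are left alone. Writing the total number of base blocks of type $m$ as $L_m$ and the total excess parameter $\sum_{k_m} j_{m,k_m}$ absorbed into type $m$ as $J_m$, one gets $L_0+L_1+L_2=L$ and $J_0+J_1+J_2=J-1$, exactly as claimed. The index balance $\sum 2s_n+\sum(i_{m,k_m}+2s_{m,k_m})=I+2N+v-2-2J_0-J_1$ comes from unwinding the constraint in Lemma~\ref{l:general case W estimate}, where each block of type $m$ contributes an extra $m$ to the derivative count, and each additional recursion step absorbs a factor of $t^{-1}$ through an $s_0\to s_0+1$ shift.

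The main obstacle is the combinatorial bookkeeping: showing that the product of (a) the multinomial coefficients arising when merging the different slots $m=0,1,2$ at each branching, (b) the $1/(2s_n)!$ weights from repeated applications of $[d_t]^{s_0}\cdot t^{-\Theta}$, and (c) the Catalan factors inherited from sub-trees, telescopes into the single prefactor $T_{L,J;p}=Cat(L+1)\cdot Cat(J)\cdot \tfrac{L!}{L_0!L_1!L_2!}\cdot \tfrac{(J-1)!}{J_0!J_1!J_2!}$. For the $Cat(L+1)\cdot Cat(J)$ part, one invokes the recursive Catalan identity (Lemma~\ref{l:Catalan variable branching}) in the same way as in Lemma~\ref{l:Quad case W estimate}, summing over the number of children $\alpha=i_0+i_1+i_2\ge 2$ at the root and using $\sum_{\alpha\ge 2}\sum_{\sum(L_k+j_k)=L+J-1}\prod Cat(L_k+j_k)\le Cat(L+J)\le Cat(L+1)Cat(J)$ up to the admissible constants. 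The two multinomial factors $\tfrac{L!}{L_0!L_1!L_2!}$ and $\tfrac{(J-1)!}{J_0!J_1!J_2!}$ come from grouping together the $L_m$ blocks of slot $m$ and the $J_m$ sub-trees of slot $m$, i.e.\ from the freedom of distributing the total $L$ leaves and $J-1$ internal descendants into three slots at each level. Combining everything yields the claimed bound, completing the induction.
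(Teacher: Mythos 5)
Your overall strategy matches the paper's: induct on $J$, apply Lemma~\ref{l:general case W estimate} once, expand sub-blocks with $j_{m,k_m}\ge 1$ by the induction hypothesis, and show the accumulated prefactors telescope into $T_{L,J;p}$. The index balance and multinomial part are described in roughly the right spirit (though you should make explicit that the multinomial bound is exactly Lemma~\ref{l:sumprod}, i.e.~identity~\eqref{e:counting row and column}, applied separately to the $l$-array and the $j$-array).

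However, there is a genuine gap in how you handle the Catalan factors. You propose to bound a single mixed sum $\sum_{\alpha\ge2}\sum_{\sum(L_k+j_k)=L+J-1}\prod Cat(L_k+j_k)\le Cat(L+J)$ (as in the intermediate case, where the target really is $Cat(L+j)$) and then split $Cat(L+J)\le Cat(L+1)\,Cat(J)$. That last inequality is false: e.g.\ with the convention $Cat(j)=\tfrac{1}{j+1}\binom{2j}{j}$ one has $Cat(4)=14$ but $Cat(3)\cdot Cat(2)=5\cdot 2=10$, and in general $Cat(L+J)$ grows like $4^{L+J}$ against $Cat(L+1)\,Cat(J)\sim 4^{L+J+1}/(L+1)^{3/2}J^{3/2}$, which is smaller once $L,J$ are moderately large. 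The reason the paper's target $T_{L,J;p}$ carries the \emph{product} $Cat(L+1)\cdot Cat(J)$ is that, unlike the intermediate case, the $l$-indices and $j$-indices contribute two \emph{independent} sums in the induction step: one applies Lemma~\ref{l:Catalan variable branching} once to
\[
\sum_{l_0\ge 2}\sum_{\sum_{m,k_m}l_{m,k_m}=L-l_0}\prod_{m,k_m}Cat(l_{m,k_m}+1)\le Cat(L+1),
\]
and once, separately, to
\[
\sum_{j_0\ge 1}\sum_{\sum_{m,k_m}j_{m,k_m}=J-1}\prod_{m,k_m}Cat(j_{m,k_m})= Cat(J).
\]
Collapsing them into a single sum over $L_k+j_k$ and invoking a (nonexistent) Catalan submultiplicativity does not repair this. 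You should restructure the combinatorial step so the $l$-sum and $j$-sum are treated independently, and then combine with the two multinomial identities from Lemma~\ref{l:sumprod} applied row-by-row to the $l$- and $j$-arrays respectively; the product of these four bounds is exactly $T_{L,J;p}$.
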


\begin{proof}
    
We prove the lemma by induction. The case $j=1$ is proved by the lemma \ref{l:general case W estimate}. Now we assume that the result holds for any $j < J$.

Then by Lemma \ref{l:general case W estimate} we have 
\begin{align*}
    W^{I+v,J}_N &\le \sum_{l_0 \geq 2} (\frac{C_Q}{t})^{l_0} \cdot \sum_{\substack{l_{0;0}+l_{0;1}+l_{0;2} = l_0 \\ \sum_{m}\sum_{k_m} j_{m,k_m}=J-1 \\ 2s_{0} + \sum_{m}\sum_{k_m} i_{m,k_m} + 2s_{m,k_m} =I +2N +v -2
    }} \Big( \frac{s_{0}!C^{s_0}}{(2s_{0;0})!t^{s_{0}}} \cdot \prod_{m=0}^2 \prod_{k_m}^{l_{0;m}} W^{i_{m,k_m}+m, j_{m,k_m}}_{s_{m,k_m}} \Big).
\end{align*}

Then we keep those terms $W^{*,0}_*$ and apply the induction hypothesis to each $W^{*,j_{m,k_m}}_*$ with $j_{m,k_m} \ge 1$.  Suppose we have $j_0$ number of term with $j_{m,k_m} \ge 1$. Let $j_{0,2}$ be the number of $W^{i_{m,k_m}+2,j_{m,k_m}}_{s_{m,k_m}}$ with $j_{m,k_m} \ge 1$. Then $j_{0,2} \le l_{0,2}$. Same for $j_{0,1}$ and $j_{m,k_m;1}$. Hence $j_0 = j_{0,1} + j_{0,2} + j_{0,3} \le l_0$ and then we obtain
\begin{align*}
    & W^{i_{m,k_m}+m, j_{m,k_m}}_{s_{m,k_m}} \le \sum_{l_{m,k_m} \geq 2} (\frac{C_Q}{t})^{\Theta \cdot l_{m,k_m}} \sum_{\substack{ l_{m,k_m;0}+l_{m,k_m;1}+l_{m,k_m;2} = l_{m,k_m} \\ j_{m,k_m;0} +j_{m,k_m;1} +j_{m,k_m;2} = j_{m,k_m} -1}}   T^{m,k_m}_{l,j;p} \\
    & \sum_{\substack{ \sum_n 2s_{m,k_m;n} + \sum_{p}\sum_{k_p} i_{m,k_m;p,k_p} + 2s_{m,k_m;p,k_p} \\
    =i_{m,k_m} +2s_{m,k_m} +m-2 -2j_{m,k_m;0}-j_{m,k_m;1} }} \Big( \prod_{n=1}^{j_{m,k_m}} \frac{s_{m,k_m;n}!C^{s_{m,k_m;n}}}{(2s_{m,k_m;n})!t^{s_{m,k_m;n}}} \cdot \prod_{p=0}^2 \prod_{k_p}^{l_{m,k_m;p} - j_{m,k_m;p}} W^{i_{m,k_m;p,k_p}+p, 0}_{s_{m,k_m;p,k_p}} \Big) 
\end{align*}
where 
\begin{align*}
    T^{m,k_m}_{l,j;p} = Cat(l_{m,k_m}+1) \cdot Cat(j_{m,k_m}) \cdot \frac{l_{m,k_m}!}{l_{m,k_m;0}!l_{m,k_m;1}! l_{m,k_m;2}!} \cdot  \frac{(j_{m,k_m}-1)!}{j_{m,k_m;0}!j_{m,k_m;1}! j_{m,k_m;2}!}.
\end{align*}

Note that $m=0,1,2$ and $k_m$ range from 1 to $j_{0,m}$. 

Note $l_{m,k_m} = \sum_{p=0}^2 l_{m,k_m;p}$ and set $L= l_0 + \sum_{m,k_m}l_{m,k_m} = \sum_p (l_{0,p} + \sum_{m,k_m} l_{m,k_m;p})$. Moreover, we set $L_p = l_{0,p} + \sum_{m,k_m} l_{m,k_m;p}$. Then $L = L_0 + L_1+L_2$. 

Note that $j_{m,k_m} -1 = \sum_{p=0}^2 j_{m,k_m;p}$ and then $J -1 =  \sum_{m,k_m} j_{m,k_m} = j_0 + \sum_p \sum_{m,k_m} j_{m,k_m;p}$. Moreover, we set $J_p = j_{0,p}+  \sum_{m,k_m} j_{m,k_m;p}$. Then $J-1 = J_0 + J_1+J_2$.

Then we have 
\begin{align*}
    &\quad 2s_0 + \sum_n \sum_{m} \sum_{k_m} 2s_{m,k_m;n} + \sum_{m}\sum_{k_m} \sum_{p}\sum_{k_p} (i_{m,k_m;p,k_p} + 2s_{m,k_m;p,k_p}) \\
    &= 2s_0 + \sum_m \sum_{k_m} (i_{m,k_m}  + 2s_{m,k_m} +m-2 - 2j_{m,k_m;0} - j_{m,k_m;1} ) \\
    &= I + 2N +v -2 - 2j_{0,0} - j_{0,1} -2 \sum_m \sum_{m,k_m} (2j_{m,k_m;0} - j_{m,k_m;1})\\
    &= I +2N+v -2 -2J_0 - J_1 .
\end{align*}

Then we rearrange the terms to get
\begin{align*}
    &W^{I+v,J}_N \le \sum_{L \ge 2J} (\frac{C}{t})^L   \sum_{\substack{ L_0+ L_1 +L_2 =L \\ J_0+J_1+J_2 = J-1  \\ s_0 + \sum_n 2s_{n} + \sum_{p}\sum_{k_p} i_{p,k_p} + 2s_{p,k_p} \\
    =I + 2N +v - 2-2J_0 - J_1 }}   \frac{s_0!C^{s_0}}{(2s_0)! \cdot t^{s_0}} \cdot \prod_{n=1}^{J-1} \frac{s_n!C^{s_n}}{(2s_n)! \cdot t^{s_n}} \cdot \prod_{p=0}^2 \prod_{k_p}^{L_p - J_p}  W^{i_{p,k_p}+p,0}_{s_{p,k_p}} \\
    &  \sum_{\substack{l_0 \ge 2 \\ j_0 \ge 1}} \sum_{\substack{\sum_p l_{0,p} = l_0  \\ \sum_p j_{0,p} =j_0}} \sum_{\substack{ \sum_{m=0}^2\sum_{k_m=0}^{j_{0,m}} l_{m,k_m} = L - l_0 \\ \sum_{m=0}^2\sum_{k_m=0}^{j_{0,m}} j_{m,k_m} = J-1 }} \sum_{\substack{ \sum_{p=0}^2  l_{m,k_m;p} = l_{m,k_m} \\ \sum_{p=0}^2  j_{m,k_m;p} = j_{m,k_m}-1 \\ \sum_{m=0}^2 \sum_{k_m=0}^{j_{0,m}} l_{m,k_m;p} = L_p - l_{0,p} \\ \sum_{m=0}^2 \sum_{k_m=0}^{j_{0,m}} j_{m,k_m;p} = J_p - j_{0,p} \\  p=0,1,2}} \prod_{m=0}^2 \prod_{k_m=1}^{j_{0,m}}  T^{m,k_m}_{l,j} 
\end{align*}

Recall that 
\begin{align*}
    T^{m,k_m}_{l,j;p} = Cat(l_{m,k_m}+1) \cdot Cat(j_{m,k_m})\cdot\frac{l_{m,k_m}!}{l_{m,k_m;0}!l_{m,k_m;1}! l_{m,k_m;2}!} \cdot  \frac{(j_{m,k_m}-1)!}{j_{m,k_m;0}!j_{m,k_m;1}! j_{m,k_m;2}!}.
\end{align*}

We {claim} that we can combine Lemma \ref{l:sumprod} and Lemma \ref{l:Catalan variable branching} to conclude the following, which would finish the proof, 
\begin{align*}
    &\quad\sum_{\substack{l_0 \ge 2 \\ j_0 \ge 1}} \sum_{\substack{\sum_p l_{0,p} = l_0  \\ \sum_p j_{0,p} =j_0}} \sum_{\substack{ \sum_{m=0}^2\sum_{k_m=0}^{j_{0,m}} l_{m,k_m} = L - l_0 \\ \sum_{m=0}^2\sum_{k_m=0}^{j_{0,m}} j_{m,k_m} = J-1  }} \sum_{\substack{ \sum_{p=0}^2  l_{m,k_m;p} = l_{m,k_m} \\ \sum_{p=0}^2  j_{m,k_m;p} = j_{m,k_m}-1 \\ \sum_{m=0}^2 \sum_{k_m=0}^{j_{0,m}} l_{m,k_m;p} = L_p - l_{0,p} \\ \sum_{m=0}^2 \sum_{k_m=0}^{j_{0,m}} j_{m,k_m;p} = J_p - j_{0,p} \\  p=0,1,2}} \prod_{m=0}^2 \prod_{k_m=1}^{j_{0,m}}  T^{m,k_m}_{l,j}  \\
    = &\Big[ \sum_{l_0 \ge 2} \sum_{\sum_{m,k_m} l_{m,k_m} = L -l_0} \prod_{m,k_m} Cat(l_{m,k_m}+1) \Big] \cdot \Big[ \sum_{j_0 \ge 1} \sum_{\sum_{m,k_m} j_{m,k_m}=J-1}\prod_{m,k_m} Cat(j_{m,k_m}) \Big] \cdot           \\
    & \Big[\sum_{\sum_p l_{0,p} = l_0}  \sum_{\substack{ \sum_{m,k_m} l_{m,k_m;p} = L_p - l_p\\ p =0,1,2;\\ \sum_p l_{m,k_m;p} = l_{m,k_m}}}  \prod_{m,k_m} \frac{l_{m,k_m}!}{l_{m,k_m;0}!l_{m,k_m;1}!l_{m,k_m;2}!} \Big]\cdot  \\ 
    &\Big[ \sum_{\sum_p j_{0,p} = j_0}  \sum_{\substack{ \sum_{m,k_m} j_{m,k_m;p} = J_p - j_p\\ p =0,1,2;\\ \sum_p j_{m,k_m;p} = j_{m,k_m}-1}}  \prod_{m,k_m} \frac{(j_{m,k_m}-1)!}{j_{m,k_m;0}!j_{m,k_m;1}!j_{m,k_m;2}!} \Big]   \\
    &\le Cat(L+1) \cdot Cat(J) \cdot \frac{L!}{L_0!L_1!L_2!} \cdot \frac{(J-1)!}{J_0!J_1!J_2!}. \\
\end{align*}.

To prove this claim, we firstly note that by Lemma \ref{l:Catalan variable branching}, since $j_0 \le l_0$, we have
\begin{align*}
    \sum_{l_0 \ge 2} \sum_{\sum_{m,k_m} l_{m,k_m} = L -l_0} \prod_{m,k_m} Cat(l_{m,k_m}+1) = \sum_{l_0 \ge 2} \sum_{\sum_{m,k_m} (l_{m,k_m}+1) = L +j_0-l_0} \prod_{m,k_m} Cat(l_{m,k_m}+1) \le Cat(L+1).
\end{align*}
Also by Lemma \ref{l:Catalan variable branching} we have
\begin{align*}
    \sum_{j_0 \ge 1} \sum_{\sum_{m,k_m} j_{m,k_m}=J-1}\prod_{m,k_m} Cat(j_{m,k_m}) = Cat(J).
\end{align*}

Since $\sum_{m,k_m} l_{m,k_m} = L- l_0$, then by \eqref{e:counting row and column} we have
\begin{align*}
    & \quad \sum_{\sum_p l_{0,p} = l_0}  \sum_{\substack{ \sum_{m,k_m} l_{m,k_m;p} = L_p - l_p\\ p =0,1,2;\\ \sum_p l_{m,k_m;p} = l_{m,k_m}}}  \prod_{m,k_m} \frac{l_{m,k_m}!}{l_{m,k_m;0}!l_{m,k_m;1}!l_{m,k_m;2}!} \\&= \sum_{\sum_p l_{0,p} = l_0} \frac{(L-l_0)!}{(L_0-l_{0,p})!(L_1-L_{0,1})!(L_2-l_{0,2})!} \le \frac{L!}{L_0!L_1!L_2!}. 
\end{align*}

Also since $\sum_{m,k_m} (j_{m,k_m}-1) = J-1-j_0$, then by \eqref{e:counting row and column} we have
\begin{align*}
    & \quad\sum_{\sum_p j_{0,p} = j_0}  \sum_{\substack{ \sum_{m,k_m} j_{m,k_m;p} = J_p - j_p\\ p =0,1,2;\\ \sum_p j_{m,k_m;p} = j_{m,k_m}-1}}  \prod_{m,k_m} \frac{(j_{m,k_m}-1)!}{j_{m,k_m;0}!j_{m,k_m;1}!j_{m,k_m;2}!} \\
    &= \sum_{\sum_p j_{0,p} = j_0} \frac{(J-1-j_0)!}{(J_0 - j_{0,p})!(J_1 - j_{1,p})!(J_2 - j_{2,p})!} \le \frac{(J-1)!}{(J_0!J_1!J_2!}.
\end{align*}

Thus the claim is true, and this finishes the proof by induction. 

\end{proof}

Finally we can estimate each term $W^{*,*}_*$. For $m=0,1,2$, by \eqref{e:derivativecont},
\begin{align*}
    W^{i+m,0}_s &= \frac{(i+m)!}{(i+m+2s-2)!} \Big| [d_t]^{s} V^{i+m,0} \Big| \\
    &\le \frac{[\frac{i+m}{2}+s]!}{(i+m+2s-2)!} \cdot \frac{C_{\ep}e^{-\frac{\ep}{t^2}}}{(\ep t)^{\frac{i+m}{2}+s}}.
\end{align*}

Then we can apply Lemma \ref{l:example general decomp} to $W^{I,J}_0$ to get
\begin{align*}
    W^{I,J}_0 &\le \sum_{L \ge 2J} (\frac{C}{t})^{\theta\cdot L}   \sum_{\substack{ L_0+L_1+L_2 = L \\ J_0 +J_1 +J_2 = J-1  \\ \sum_n 2s_n + \sum_{m}\sum_{k_m} i_{m,k_m} +  2s_{m,k_m} \\=I  -2 -2J_0-J_1}} T_{L,J,p} \cdot \Big( \prod_{n=1}^{J} \frac{s_n!C^{s_n}}{(2s_n)!t^{s_n}} \cdot \prod_{m=0}^2 \prod_{k_m}^{L_m-J_m} W^{i_{m,k_m}+m, 0}_{s_{m,k_m}} \Big).
\end{align*}

Note that $\sum_n 2s_n + \sum_{m,k_m} (i_{m,k_m} +m + 2s_{m,k_m}) = I + L_1 +2L_2 - 2J = I + (L-2J) - 2L_0 -L_1$. For simplicity we let $\M = [\frac{I + (L-2J) - 2L_0 -L_1}{2}]$. Then by Lemma \ref{l:prod estimate} we have
\begin{align*}
    &\quad \quad \prod_{n=1}^J \frac{s_n! C^{s_n}}{(2s_n)! t^{s_n}}\prod_m \prod_{k_m} W^{i_{m,k_m} +m,0}_{s_{m,k_m}} \\
    &\le \prod_{n=1}^J \frac{s_n! C^{s_n}}{(2s_n)! t^{s_n}}\prod_m \prod_{k_m} \Big( \frac{[\frac{i_{m,k_m}+m}{2}+s_{m,k_m}]!}{(i_{m,k_m}+m+2s_{m,k_m}-2)!} \cdot \frac{C_{\ep}e^{-\frac{\ep}{t^2}}}{(\ep t)^{\frac{i_{m,k_m}+m}{2}+s_{m,k_m}}} \Big) \\
    &\le \frac{C_{\ep}^{I+L-J+1} e^{-\frac{\ep}{t^2}(L-J+1)}}{(\ep t)^{\M}} \cdot \Big( C^{\M} \cdot \frac{(L+1)^{\M}}{\M!} \Big).
\end{align*}

Since $T_{L,J;p} \le C^{L+J}$, we then have
\begin{align*}
    \sum_J \sum_I W^{I,J}_0 |x|^I &\le \sum_J C^J  \sum_{L\ge 2J} (\frac{C}{t})^{\theta\cdot L}  e^{-\frac{\ep}{t^2}(L-J+1)}\sum_{\substack{ L_0+L_1+L_2 = L \\ J_0 +J_1 +J_2 = J-1}} \sum_I \frac{(C|x|)^I \cdot (L+1)^{\M}}{(\ep t)^{\M}\cdot \M!}.
\end{align*}

If $|x| \le 1$, we simply use $(C|x|)^I \le C^I$ and then estimate
\begin{align*}
    \sum_I \frac{(C|x|)^I \cdot (L+1)^{\M}}{(\ep t)^{\M}\cdot \M!} \le \sum_I \frac{C^I \cdot (L+1)^{\M}}{(\ep t)^{\M}\cdot \M!} \le C^{L+J} e^{\frac{C(L+1)}{\ep t}}.
\end{align*}
This implies that
\begin{align*}
    \sum_J \sum_I W^{I,J}_0 |x|^I \le \sum_J C^J \sum_{L \ge 2J} (\frac{C}{t})^{\theta \cdot L} e^{-\frac{\ep}{t^2}(L-J+1)} \cdot e^{\frac{C(L+1)}{\ep t}}.
\end{align*}

Since $L \ge 2J$, we can let $K \equiv L  - 2J \ge 0$ and then 
\begin{align}\label{e:|x|<1}
    \sum_J \sum_I W^{I,J}_0 |x|^I &\le \sum_J C^{ J} \sum_{K \ge 0} (\frac{C}{t})^{\theta \cdot (K+2J)} e^{-\frac{\ep}{t^2}(J+K+1)} \cdot e^{\frac{C(K+2J+1)}{\ep t}} \\
    &\le e^{-\frac{\ep}{t^2}+\frac{C}{\ep t}}\sum_{J \ge 0} \Big( (\frac{C}{t})^{\theta } e^{-\frac{\ep}{t^2} +\frac{C}{\ep t}} \Big)^J \cdot \sum_{K \ge 0} \Big( (\frac{C}{t})^{\theta } e^{-\frac{\ep}{t^2} +\frac{C}{\ep t}} \Big)^K \notag\\
    &\le e^{-\frac{\ep}{t^2}+\frac{C}{\ep t}}(\sum_{J \ge 0} \Big(  e^{-\frac{\ep}{t^2} +\frac{C}{\ep t}} \Big)^J)^2,\notag
\end{align}
where the constant $C$ depends on $Q, \theta, \ep$.

If $|t| \le c(Q,\theta,\ep)$ for some small $c(Q,\theta)$, then $e^{-\frac{\ep}{t^2} +\frac{C}{\ep t}}  < \frac{1}{2}$ and then the series converges. Since $V^{I,J} \le W^{I,J}_0$, this implies that $u_1(x,t) = \sum_{I,J} V^{I,J}(t) x^I$ is well-defined in $[-1,1]\times [-c,c]$. 

Next we consider the case $|x| \ge 1$. If $I \le 2\M$, then $(C|x|)^{I} \le (C|x|)^{2\M}$ and thus
\begin{align*}
        \sum_I \frac{(C|x|)^I \cdot (L+1)^{\M}}{(\ep t)^{\M}\cdot \M!} \le C^{L+J} e^{\frac{C(L+1)x^2}{\ep t}}.
\end{align*}

If $I > 2\M$, then 
\begin{align*}
        \sum_I \frac{(C|x|)^I \cdot (L+1)^{\M}}{(\ep t)^{\M}\cdot \M!} \le C^{L+J} |x|^{I - 2\M} e^{\frac{C(L+1)x^2}{\ep t}} \le e^{\frac{C(L+J+1)x^2}{\ep t}}.
\end{align*}

Then by the same arguments we can obtain
\begin{align}
    \sum_J \sum_I W^{I,J}_0 |x|^I &\le  e^{-\frac{\ep}{t^2}+\frac{Cx^2}{\ep t}}(\sum_{J \ge 0} \Big(  e^{-\frac{\ep}{t^2} +\frac{Cx^2}{\ep t}} \Big)^J)^2,\label{e:|x|>1}
\end{align}
where the constant $C$ depends on $Q, \theta, \ep$.

Hence if $|x| \ge 1$, we can choose $|t| \le c/|x|^2$ for some small $c(Q,\theta)$ so that  $e^{-\frac{\ep}{t^2} +\frac{Cx^2}{\ep t}} < \frac{1}{2}$. Then the solution $u_1(x,t)$ is well-defined for $1\le |x|^2 \le c/|t|$. Combining with the discussion in the case when $|x|\leq 1$, we know that $u_1(x,t)$ is well-defined for $\{(x,t)\,|\, |x|\leq \frac{c}{|t|}\}$.

Moreover, $u_{1}(x,t)$ is a solution of \eqref{e:perturbequ} for $s=1$ with initial condition $u_1(x,0)\equiv 0$ and the solution is non-trivial since $u_1(0,t) = e^{-1/t^2}$.

\subsection{Most general region}

In this subsection, we generalize the result to the most general region using Lemma \ref{l:fun} and prove Theorem \ref{t:Tikhonov}. 
\begin{proof}[Proof of Theorem \ref{t:Tikhonov}]

Recall that in Subsection \ref{subsection:T}, we have defined a new family of Tikhonov type counter-examples
\begin{align*}
    \varphi_N(t)=\left\{\begin{array}{cc}
         f_N(t),&\, t\geq 0 \\
         0,&\, t<0.
    \end{array}\right.
\end{align*}
If we denote \begin{align*}
    g_N=\frac{2}{t}(\frac{8}{\ep})^{N-1}\prod_{j=0}^{N-1}\frac{1}{f_j^\ep},
\end{align*}
then we have 
\begin{align*}
    |\varphi_N^{(i)}|\leq i!(g_N)^if_N^{1-\ep}.
\end{align*}
Note that for $N=1$, this corresponds to the previous discussion. So if we build $u_{1,N}$ based on the new Tikhonov type counter-examples, from discussions in \eqref{e:|x|<1} and \eqref{e:|x|>1} we can get
\begin{align*}
    |u_{1,N}(x,t)|\leq
         f_N^{1-\ep}e^{Cg_N|\tilde{x}|^2} (\sum\limits_{J \ge 0} \Big( f_N^{1-\ep}e^{Cg_N|\tilde{x}|^2} \Big)^J)^2\leq \frac{ f_N^{1-\ep}e^{Cg_N|\tilde{x}|^2} }{(1- f_N^{1-\ep}e^{Cg_N|\tilde{x}|^2} )^2} \qquad\,\text{on}\,\{(x,t)\,|\,{Cg_N\tilde{x}^2}<\frac{1-\ep}{f_{N-1}}\,\},
\end{align*}
where $\tilde{x}=\max\{|x|,1\}$.

Also note that $\frac{1}{f_0}=\frac{1}{t^2}$ and \begin{align*}
    \frac{1}{f_{N+1}}=e^{\frac{1}{f_N}} 
\end{align*}
and thus \begin{align*}
    \frac{1}{f_N}\geq \frac{1}{t^2}
\end{align*}
while \begin{align*}
   (\frac{1}{f_N})(t)=   e^{\circ N}(\frac{1}{t^2}),
\end{align*}
where
\begin{align*}
    e^{\circ N}(x)=e^{e^{\circ {N-1}}(x)}.
\end{align*}

Thus by choosing $\ep$ small, $u_1(x,t)$ is well defined given 
\begin{align*}
    Cg_N\tilde{x}^4\leq -\frac{1}{3}\log (f_N) =\frac{1}{3f_{N-1}}.
\end{align*}
This is equivalent to that $u_1(x,t)$ is well defined given 
\begin{align*}
    \tilde{x}\leq (\frac{t\prod\limits_{j=0}^{N-1}f_j^\ep}{C_0C^N f_{N-1}})^{\frac{1}{4}}.
\end{align*}
Note that 
\begin{align*}
   (\frac{t\prod\limits_{j=0}^{N-1}f_j^\ep}{C_0C^N f_{N-1}})^{\frac{1}{4}}\geq C^{-\frac{N}{4}}t^{\frac{1}{4}}(\frac{\frac{1}{f_{N-1}^{1-\ep}}}{\prod\limits_{j=0}^{N-2}\frac{1}{f_j^\ep}} )^{\frac{1}{4}}.
\end{align*}
Also as for $y\geq 1$, \begin{align*}
 e^y\geq y^e,
\end{align*}

and by induction we have for $t\leq 1$ 
\begin{align*}
 ( \frac{1}{f_N})(t)= e^{\circ N}(\frac{1}{t^2})\geq ( e^{\circ N-1}({\frac{1}{t^2}}))^e\geq  e^{e^{N}\frac{1}{t^2}}.
\end{align*}
Thus given $t\leq 1$ and small $\ep$, for large  $k$ we would have 
\begin{align*}
     C^{-\frac{N}{4}}t^{\frac{1}{4}}(\frac{\frac{1}{f_{N-1}^{1-\ep}}}{\prod\limits_{j=0}^{N-2}\frac{1}{f_j^\ep}} )^{\frac{1}{4}}\geq& C^{-\frac{N}{4}}t^{\frac{1}{4}}(\frac{1}{f_{N-1}})^{\frac{1}{4}(1-\ep-\sum\limits_{j=1}^{N-2}\frac{1}{e^j})} \geq(e^{\circ N-1}(\frac{1}{t^2}))^{\frac{1}{4}(1-\frac{1}{e-1}-\ep-(1-\frac{1}{e-1}-\frac{2}{5}))}\\=&(e^{\circ N-1}(\frac{1}{t^2}))^{\frac{1}{4}(\frac{2}{5}-\ep)}\geq e^{\circ N-2}(\frac{1}{t^2}).
\end{align*}

So in particular we have that for all large $k$, we can get a corresponding $u_1$ defined on 
\begin{align*}
    \{(x,t)\,|\quad |x|\leq e^{\circ N}(\frac{1}{t^2}),\,\,-1\leq t< 0\}
\end{align*}

where
\begin{align*}
    e^{\circ N}(x)&=e^{e^{\circ {N-1}}(x)},\\ e^{\circ 0}(x)&=x.
\end{align*}

Also note that in this region, we have
 \begin{align*}
     f_N^{1-\ep}e^{{Cg_N\tilde{x}^2}}\leq f_N^{1-\ep}\frac{1}{f_N^{\frac{2}{3}}}=f_N^{\frac{1}{3}-\ep}.
 \end{align*}
 Thus for small $\ep$,
\begin{align*}
    |u_{s,N}(x,t)|\leq f_N^{\frac{1}{4}}\leq (e^{\circ N}(\frac{1}{t^2}))^{-\frac{1}{4}}\leq \frac{1}{e^{\circ N-1}(\frac{1}{t^2})}
\end{align*}
on \begin{align*}
    \{(x,t)\,|\quad |x|\leq e^{\circ N}(\frac{1}{t^2}),\,\,-1\leq t<0\}.
\end{align*}

As $e^{\circ N}(\frac{1}{t^2})$ is increasing with respect to $N$, by taking a larger $N$ , we finish the proof of Theorem \ref{t:Tikhonov}. 

\end{proof}

\section{Counter-examples for rescaled mean curvature flow}

In this section, we will prove Corollary \ref{c:RMCF} and Corollary \ref{c:prod}.

\begin{proof}[Proof of Corollary \ref{c:RMCF}]

For a rescaled mean curvature flow which is a graph of $v(x,\tau)$ over $\RR$, we have that 
\begin{align*}
    \partial_\tau v=L_{\RR}v+\mathcal{Q}(v,\na v,\na^2v).
\end{align*}
If we rescaled back to mean curvature flow, we have 
\begin{align*}
    u(y,t)={\sqrt{-t}}\cdot\,v(\frac{y}{\sqrt{-t}},-\ln(-t)).
\end{align*}
And thus $u$ satisfies
\begin{align*}
    \frac{\partial u}{\partial t}=\frac{\partial^2u}{\partial x^2}+\frac{1}{\sqrt{-t}}\mathcal{Q}(\frac{1}{\sqrt{-t}}u,\frac{\partial u}{\partial x},\sqrt{-t}\,\frac{\partial ^2u}{\partial x^2}).
\end{align*}
Note that from \cite{SX22}*{Proposition A.1}, we can see that $\mathcal{Q}$ is analytic around $(0,0,0)$ with respect to $(u,\frac{\partial u}{\partial x},\frac{\partial^2 u}{\partial x^2})$. So we can use Theorem \ref{t:Tikhonov} with $\beta=-\frac{1}{2},\,\,\alpha_1=-\frac{1}{2},\,\,\alpha_2=0,\,\,\alpha_3=\frac{1}{2}$ to get the Corollary \ref{c:RMCF}.

\end{proof}

\begin{proof}[Proof of Corollary \ref{c:prod}]

For a graphical rescaled mean curvature flow over $\RR \times \mathbb{S}^{1}$, where the graph function $v$ is independent with respect to $\mathbb{S}^1$, the equation is also of the form (see for example \cite{SX22})
\begin{align*}
    \partial_\tau v=L_{\RR} v+\mathcal{Q}(v,\na v,\na^2v),
\end{align*}
where $\mathcal{Q}$ is analytic around $(0,0,0)$ satisfying \eqref{e:Q}. So similar as before, if we rescaled back to mean curvature flow, we have 
\begin{align*}
    u(y,t)={\sqrt{-t}}\cdot\,v(\frac{y}{\sqrt{-t}},-\ln(-t)).
\end{align*}
And thus $u$ satisfies
\begin{align*}
    \frac{\partial u}{\partial t}= \frac{\partial^2u}{\partial x^2}+\frac{1}{\sqrt{-t}}\mathcal{Q}(\frac{1}{\sqrt{-t}}u,\frac{\partial u}{\partial x},\sqrt{-t}\,\frac{\partial^2u}{\partial x^2}).
\end{align*}As $\mathcal{Q}$ is analytic around $(0,0,0)$ with respect to $(u,\na u,\na^2 u)$ satisfies the assumptions in Theorem \ref{t:Tikhonov}. So from Theorem \ref{t:Tikhonov} with $\beta=-\frac{1}{2},\,\,\alpha_1=-\frac{1}{2},\,\,\alpha_2=0,\,\,\alpha_3=\frac{1}{2}$ we can get Corollary \ref{c:prod}.

\end{proof}

\appendix{}\section {An ODE lemma}

\label{ODE}

Here we prove an ODE lemma due to Filippas-Kohn \cite{FK92} and Merle-Zaag \cite{MZ98}, see also  Choi-Mantoulidis \cite{CM22}. .

\begin{Lem}\label{L:Merle-Zaag}
    Let $x_+, x_0, x_-: [0, \infty) \to [0, \infty)$ be absolutely continuous functions such that
    \begin{align*}
        x_+ + x_0 + x_- > 0,\\
        \liminf_{s \to \infty} x_+ = 0.
    \end{align*}
and 
\begin{align*}
    |x_0'| &\le \ep(x_+ + x_0 +x_-),\\
    x_+'  &\ge b_1x_+ -  \ep(x_0 + x_ -), \\
    x_-'  &\le - b_2x_- + \ep(x_0 + x_+),
\end{align*}
where $b_1\geq b>0,b_2\geq b>0$ are  positive constants.

Then there exist $\ep_0> 0$  such that if $\frac{\ep}{b} \le \ep_0$
\begin{equation}\label{e:x_+ is small}
    x_+ \le 2 \frac{\ep}{b} (x_0 + x_-) \text{ in } [0, \infty),
\end{equation}
and one of the following holds: 
\begin{align}
    \text{ either } x_- \le 8 \frac{\ep}{b} x_0 \text{ in } [s_0, \infty) \\
    \text{ or } x_0 \le 20\frac{\ep}{b} x_- \text {in } [0,\infty). \label{e: x_- dominates}
\end{align}

Moreover, if \eqref{e: x_- dominates} holds, then
\begin{equation*}
    \limsup_{s\to \infty} e^{(b_2-\ep)s} (x_+ + x_0 + x_-) = 0.
\end{equation*}
\end{Lem}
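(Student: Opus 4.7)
The plan is to run a barrier argument in the spirit of Merle--Zaag and Filippas--Kohn: design simple linear combinations of $x_+, x_0, x_-$ whose sign is controlled by the three differential inequalities, and combine this with the assumption $\liminf_{s\to\infty} x_+=0$ to rule out the unstable direction. The analysis then splits naturally into an upper-bound step for $x_+$, a dichotomy step for $x_-$ versus $x_0$, and a Gronwall-type decay estimate in the stable-dominated regime.

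For \eqref{e:x_+ is small}, set $g(s):=x_+(s)-2\tfrac{\ep}{b}(x_0(s)+x_-(s))$ and suppose for contradiction that $g(s_*)>0$. On the set $\{g>0\}$ we have $x_0+x_-<\tfrac{b}{2\ep}x_+$, hence
\begin{equation*}
x_+'\ge b_1 x_+-\ep(x_0+x_-)\ge \tfrac{b}{2}x_+,
\end{equation*}
while the inequalities for $x_0'$ and $x_-'$ give
\begin{equation*}
(x_0+x_-)'\le 2\ep\,(x_++x_0+x_-)\le 2\ep\bigl(1+\tfrac{b}{2\ep}\bigr)x_+=(2\ep+b)x_+.
\end{equation*}
Combining these yields $g'\ge\bigl[\tfrac{b}{2}-\tfrac{2\ep}{b}(2\ep+b)\bigr]x_+>0$ once $\ep/b$ is small enough, so $\{g>0\}$ is forward-invariant. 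But then $x_+(s)\ge x_+(s_*)e^{(b/2)(s-s_*)}$ on $[s_*,\infty)$, contradicting $\liminf x_+=0$. Hence $g\le 0$ on $[0,\infty)$.

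Next, for the dichotomy, I would study the barriers $h_1:=x_--8\tfrac{\ep}{b}x_0$ and $h_2:=x_0-20\tfrac{\ep}{b}x_-$. Using \eqref{e:x_+ is small} to absorb all occurrences of $x_+$ into lower-order multiples of $x_0+x_-$, a direct computation shows that at any point where $h_1=0$,
\begin{equation*}
h_1'\le -b_2 x_-+\ep(x_0+x_+)+8\tfrac{\ep}{b}\cdot\ep(x_++x_0+x_-)\le -\bigl(b_2-O(\ep)\bigr)x_-+O(\ep)x_0,
\end{equation*}
which, when $x_-=8(\ep/b)x_0$, gives $h_1'\le -(b_2/2)\cdot 8(\ep/b)x_0+O(\ep)x_0<0$ for $\ep/b$ small. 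Thus $h_1$ cannot cross $0$ from below, so $\{h_1\le 0\}$ is forward-invariant; if $h_1(s_0)\le 0$ for some $s_0$, then case (a) holds on $[s_0,\infty)$. Otherwise $h_1>0$ on all of $[0,\infty)$, and a parallel computation for $h_2$ (using the choice $20$ rather than $8$ to give the needed slack for the sign of $h_2'$) shows that $h_2\le 0$ throughout, i.e.\ case (b) $x_0\le 20(\ep/b)x_-$ on $[0,\infty)$.

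Finally, in case \eqref{e: x_- dominates}, combining the bounds $x_0\le 20(\ep/b)x_-$ with \eqref{e:x_+ is small} gives $x_+\le 2(\ep/b)(1+20\ep/b)x_-\le 3(\ep/b)x_-$, so $x_++x_0+x_-\le 2x_-$. Plugging into $x_-'\le -b_2 x_-+\ep(x_0+x_+)$ yields
\begin{equation*}
x_-'\le -\bigl(b_2-C\tfrac{\ep^2}{b}\bigr)x_-,
\end{equation*}
and Gronwall's inequality gives $x_-(s)\le x_-(0)e^{-(b_2-C\ep^2/b)s}$. For $\ep/b$ small enough, $C\ep^2/b<\ep$, and hence $e^{(b_2-\ep)s}(x_++x_0+x_-)\le 2x_-(0)e^{-(\ep-C\ep^2/b)s}\to 0$, proving the moreover part. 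The main technical obstacle is the delicate bookkeeping of the numerical constants $2$, $8$, $20$ in the dichotomy step; one must verify that these specific factors leave enough margin in the $\ep/b$-expansion to make the barrier derivatives have the correct sign simultaneously for $g$, $h_1$, and $h_2$.
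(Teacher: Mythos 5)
Your first step (the upper bound on $x_+$) and your final Gronwall step are both correct and match the paper's argument in substance; the paper phrases the contradiction via $\liminf\beta\le 0$ rather than exponential growth of $x_+$, but these are equivalent. The barrier argument for $h_1$ in the first branch of the dichotomy is also fine.

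The genuine gap is in the second branch, where you assert that ``a parallel computation for $h_2:=x_0-20\tfrac{\ep}{b}x_-$ shows that $h_2\le 0$ throughout.'' A pointwise barrier for $h_2$ cannot work, for two independent reasons. First, the hypotheses only give the one-sided inequality $x_-'\le -b_2x_-+\ep(x_0+x_+)$; this is a lower bound on $-x_-'$ and gives no upper bound, so the term $-20\tfrac{\ep}{b}x_-'$ in $h_2'$ cannot be controlled from above. Since $x_-$ is the strongly damped component, $-x_-'$ is generically of size $b_2x_-$, which dominates the $O(\ep)$ terms, and at $h_2=0$ one in fact only gets $h_2'\ge 20\tfrac{\ep b_2}{b}x_- - O(\ep)x_->0$: the barrier closes in the \emph{wrong} direction (the larger constant $20$ does not help, because the sign comes from the damping term $-b_2x_-$, not from any slack you can adjust). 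Second, even if the barrier did close, it would only show that $\{h_2\le 0\}$ is forward-invariant, while the statement requires $x_0\le 20\tfrac{\ep}{b}x_-$ from $s=0$; there is nothing preventing $h_2(0)>0$ and $h_1(0)>0$ simultaneously (e.g.\ $x_0(0)=x_-(0)$, both positive, gives $x_0(0)>20\tfrac{\ep}{b}x_-(0)$ and $x_-(0)>8\tfrac{\ep}{b}x_0(0)$ for small $\ep/b$), so the initial value is not automatically in the invariant region.

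What is actually needed here is the integral/asymptotic argument of the paper, not a barrier. In the regime $x_->8\tfrac{\ep}{b}x_0$ one deduces (after rescaling to $b=1$) $x_-'\le -\tfrac14 x_-$ and $|x_0'|\le(\tfrac14+2\ep)x_-$, hence $\int_s^\infty x_-\le 4x_-(s)$ and $x_0(s)\le x_0(T)+(1+8\ep)x_-(s)$ for every $T>s$. One then chooses $T_i\to\infty$ with $x_-(T_i)\to 0$; since $\gamma(T_i)<0$ forces $x_0(T_i)<\tfrac{1}{8\ep}x_-(T_i)\to 0$, sending $T_i\to\infty$ gives $x_0(s)\le (1+8\ep)x_-(s)$, and one bootstrap of the same estimate improves this to $x_0\le 20\ep\, x_-$ on all of $[0,\infty)$. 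The essential point is that $x_0$ has no intrinsic damping, so the bound must be propagated backward from the known asymptotics at infinity; this is precisely what a pointwise comparison principle cannot supply.
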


\begin{proof}
Firstly since once we define $\tilde{x}_\pm(t)=x_\pm(\frac{t}{b})$ and $\tilde{x}_0(t)=x_0(\frac{t}{b})$, we would have 
\begin{align*}
    |\tilde{x}_0'| &\le \frac{\ep}{b}(\tilde{x}_+ + \tilde{x}_0 +\tilde{x}_-),\\
    \tilde{x}_+'  &\ge \frac{b_1}{b}\tilde{x}_+ -  \frac{\ep}{b}(\tilde{x}_0 + \tilde{x}_ -), \\
    \tilde{x}_-'  &\le - \frac{b_2}{b}\tilde{x}_- +\frac{\ep}{b}(\tilde{x}_0 + \tilde{x}_+).
\end{align*}
So in the following, without loss of generality we only need to consider the case that $b=1$. Note that as $b_1,b_2\geq b$ and $x_+,x_-,x_0\geq 0$, we in particular have the following 
\begin{align*}
    |\tilde{x}_0'| &\le \frac{\ep}{b}(\tilde{x}_+ + \tilde{x}_0 +\tilde{x}_-),\\
    \tilde{x}_+'  &\ge \tilde{x}_+ -  \frac{\ep}{b}(\tilde{x}_0 + \tilde{x}_ -), \\
    \tilde{x}_-'  &\le - \tilde{x}_- +\frac{\ep}{b}(\tilde{x}_0 + \tilde{x}_+).
\end{align*}
    Let $\beta \equiv x_+ - 2\ep (x_0 + x_-)$. We first show that $\beta \le 0 $ in $[0, \infty)$.

    Suppose $\beta(s_0) > 0$, that is $x_+(s_0) \ge 2\ep (x_0 + x_-)(s_0)$. Then at $s_0$ we can compute
    \begin{align*}
        \beta' &= x'_+ - 2\ep (x'_0 + x'_-) \\
        &\ge \big(x_+ - \ep(x_0 + x_-)\big) - 2 \ep^2 \big(x_+ +x_0 +x_-) + 2 \ep \big( x_- - \ep(x_0 + x_+) \big) \\
        &\ge (1-4\ep^2) x_+ + (-\ep - 4\ep^2) x_0 + (\ep - 2\ep^2) x_- \\
        &\ge \ep(1 - 4\ep - 8 \ep^2) x_0 + \ep (3 -2\ep -8 \ep^2) x_-
    \end{align*}
where the last inequality comes from $\beta(s_0) \ge 0$. Hence if $\ep$ is small enough, then $\beta'(s_0) > 0$. In particular, we have $\beta(s) \ge \beta(s_0) > 0$ in $[s_0, \infty)$. On the other hand, since $\liminf_{s \to \infty} x_+(s) = 0$, we have $\liminf_{ s \to \infty} \beta(s) \le 0$. This is a contradiction. 

Hence we prove that $x_+ \le 2\ep (x_0 + x_-)$ in $[0, \infty)$. In particular, this implies that
\begin{align*}
    |x'_0| &\le 2 \ep (x_0 + x_-) \\
    x'_- &\le -\frac{1}{2} x_- + 2\ep x_0. 
\end{align*}

Now let $\gamma(s) \equiv 8 \ep x_0 - x_-$. 

\textbf{Case 1: } Suppose there exist some $s_0 \ge 0$ such that $\gamma(s_0)  = 0$. That is $8\ep x_0(s_0) = x_-(s_0)$. Then at $s_0$ we compute
\begin{align*}
    \gamma' &= 8 \ep x'_0 - x'_- \\
    &\ge -16 \ep^2 (x_0 + x_-) + \frac{1}{2} x_- - 2\ep x_0 \\
    &= 2\ep( 1- 8 \ep - 64 \ep^2) x_0\\
    &\ge 0.
\end{align*}
From the inequality, we note that if $\gamma'(s_0) = 0$, then $x_0(s_0) = x_-(s_0) = 0$. This implies that $x_+(s_0) = 0$. Contradiction arises. Hence we have $\gamma'(s_0) > 0$. In particular $\gamma(s) \ge \gamma(s_0) \ge 0$ for $s \in [s_0, \infty)$.

\textbf{Case 2: } Suppose $\gamma(s) < 0$ for $s\in [0, \infty)$. This implies that
\begin{align*}
    |x'_0| &\le (\frac{1}{4} + 2 \ep) x_-, \\
    x'_- &\le -\frac{1}{4} x_-. 
\end{align*}

Since $x_-(t) \ge 0$, then we have
\begin{equation*}
    x_-(s) \ge \frac{1}{4} \int_{s}^{\infty} x_-(t) dt,
\end{equation*}
and for any large $T$ we have
\begin{equation*}
    x_0(s) \le x_0(T) + (\frac{1}{4} + 2 \ep) \int_s^T x_-(t) dt \le x_0(T) + (1 + 8 \ep) x_-(s).
\end{equation*}

Since $\int_s^{\infty} x_- \le 4 x_-(s) < \infty$, there exists a sequence $T_i \to \infty$ such that $x_-(T_i) \to 0$. Since $\gamma(T_i) < 0$, then $x_0(T_i) \le (8\ep)^{-1} x_-(T_i) \to 0$ as $T_i \to \infty$. Hence we have
\begin{equation*}
    x_0(s) \le (1 + 8 \ep) x_-(s).
\end{equation*}

Hence we obtain an improved estimate for $x_0$. Using this we have
\begin{equation*}
    |x'_0|\le 2\ep (1+8\ep +1 ) x_-,
\end{equation*}
and thus
\begin{equation*}
    x_0(s) \le 2\ep (2+8\ep) \int_s^{\infty} x_-(t) dt \le  8\ep (2+8\ep) x_-(s). 
\end{equation*}

Hence, if $\ep$ is chosen small, then we have $x_0(s) \le 20\ep x_-(s)$ in $[0,\infty)$. In this case, we have
\begin{equation*}
    x'_- \le -b_2x_- + \ep (x_0 + x_+) \le (-b_2 + \ep/2)x_-.
\end{equation*}

By Fundamental Theorem of Calculus,
\begin{equation*}
    \limsup_{s \to \infty} e^{(b_2-\ep/2)s} x_-(s) \le
    x_-(0).
\end{equation*}

This implies that
\begin{equation*}
    \limsup_{s \to \infty} e^{(b_2-\ep)s} x_-(s) \le 0.
\end{equation*}

Moreover, by \eqref{e:x_+ is small} and \eqref{e: x_- dominates}, we obtain 
\begin{equation*}
    \limsup_{s\to \infty} e^{(b_2- \ep)s} (x_+ + x_0 + x_-) = 0.
\end{equation*}
\end{proof}

\section{Combinatorial lemmas}\label{A:Catalan}

We collect some useful facts in combinatorics here:

\begin{Lem}\label{l:sumprod}
Given $X, Y,Z$ and $\sum_i l_i = X+Y+Z$:
\begin{equation}\label{e:counting row and column}
    \sum_{\substack{\sum_i x_i = X\\
\sum_i y_i = Y\\
\sum_i z_i = Z\\ x_i+y_i+z_i= l_i}} \prod_i \frac{l_i!}{x_i!y_i!z_i!} = \frac{(X+Y+Z)!}{X!Y!Z!}.
\end{equation}    
\end{Lem}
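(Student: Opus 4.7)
The identity is a standard multinomial/double-counting fact, and the plan is to deduce it in essentially one line from the multinomial theorem applied to $(a+b+c)^N$ with $N = X+Y+Z$.

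First I would use the hypothesis $\sum_i l_i = X+Y+Z$ to factor
\begin{equation*}
(a+b+c)^{X+Y+Z} \;=\; \prod_i (a+b+c)^{l_i}.
\end{equation*}
Expanding the left-hand side by the trinomial theorem shows that the coefficient of the monomial $a^X b^Y c^Z$ equals $\frac{(X+Y+Z)!}{X!Y!Z!}$, which is exactly the right-hand side of the identity to be proved.

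Next I would compute the same coefficient from the product on the right-hand side. In each factor $(a+b+c)^{l_i}$ the monomial $a^{x_i} b^{y_i} c^{z_i}$ appears with multiplicity $\frac{l_i!}{x_i!y_i!z_i!}$ subject to $x_i+y_i+z_i = l_i$, and to obtain the global monomial $a^X b^Y c^Z$ one must have $\sum_i x_i = X$, $\sum_i y_i = Y$, $\sum_i z_i = Z$. Collecting these contributions reproduces precisely the sum on the left-hand side of the identity. Equating the two expressions for the coefficient of $a^X b^Y c^Z$ concludes the proof.

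There is no real obstacle here: the only care needed is to verify that the constraints appearing in the summation are exactly those forced by matching the target monomial in the product expansion, which is immediate from the factorization. Equivalently, one may give a direct bijective proof by observing that both sides count arrangements of $X$ objects of type $\alpha$, $Y$ of type $\beta$, and $Z$ of type $\gamma$ into an $N$-element sequence pre-partitioned into labelled blocks of sizes $l_1, l_2, \ldots$; the right-hand side counts them in a single multinomial step, while the left-hand side counts them by first choosing the contents $(x_i, y_i, z_i)$ of each block and then arranging within each block.
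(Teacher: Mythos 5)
Your proof is correct. The paper states Lemma~\ref{l:sumprod} in Appendix~\ref{A:Catalan} without proof, treating it as a standard combinatorial fact, so there is no argument in the paper to compare against; your generating-function derivation (matching the coefficient of $a^X b^Y c^Z$ in $(a+b+c)^{X+Y+Z} = \prod_i (a+b+c)^{l_i}$) is the standard and cleanest way to establish it, and your alternative bijective reading --- first allocate block contents $(x_i,y_i,z_i)$, then arrange within each block --- is an equally valid double count of the same set of labelled arrangements. One small point worth making explicit if you write this up: the constraints $\sum_i x_i = X$, $\sum_i y_i = Y$, $\sum_i z_i = Z$ together with $x_i+y_i+z_i = l_i$ are consistent precisely because of the hypothesis $\sum_i l_i = X+Y+Z$, which is exactly what licenses the factorization of $(a+b+c)^{X+Y+Z}$ in the first place; you do note this, and it is the only place the hypothesis is used.
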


\begin{Lem}[Robbins' inequality]\label{l:Robbin}
    \begin{align*}
        \sqrt{2\pi n} \cdot \Big( \frac{n}{e} \Big)^n \cdot e^{\frac{1}{12n+1}} < n! < \sqrt{2\pi n} \cdot \Big( \frac{n}{e} \Big)^n \cdot e^{\frac{1}{12n}}.
    \end{align*}    
\end{Lem}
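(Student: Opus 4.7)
The plan is to prove Robbins' sharpened Stirling inequality by the classical telescoping argument applied to the Stirling correction sequence
$$d_n := \ln(n!) - \left(n + \tfrac{1}{2}\right)\ln n + n - \tfrac{1}{2}\ln(2\pi).$$
Upon exponentiation, the stated double inequality is equivalent to $\tfrac{1}{12n+1} < d_n < \tfrac{1}{12n}$ for every $n \geq 1$, so this is what I aim to establish.

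The first step is a direct computation of the consecutive difference,
$$d_n - d_{n+1} = \left(n + \tfrac{1}{2}\right)\ln\!\left(1 + \tfrac{1}{n}\right) - 1.$$
Setting $t = 1/(2n+1)$ and using the identity $\operatorname{artanh}(t) = \tfrac{1}{2}\ln\tfrac{1+t}{1-t}$, the right-hand side rewrites as
$$\frac{\operatorname{artanh}(t)}{t} - 1 \;=\; \sum_{k=1}^{\infty} \frac{t^{2k}}{2k+1},$$
which is manifestly positive, so $d_n$ is strictly decreasing. Combined with the standard fact $d_n \to 0$ as $n \to \infty$ (i.e.\ Stirling's formula), this already yields $d_n > 0$ for every $n$.

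The heart of the proof is the two-sided comparison
$$\frac{1}{12n+1} - \frac{1}{12(n+1)+1} \;<\; d_n - d_{n+1} \;<\; \frac{1}{12n} - \frac{1}{12(n+1)}.$$
The upper bound is elementary: using $1/(2k+1) \leq 1/3$ for $k \geq 1$ majorizes the series by the geometric sum $\tfrac{t^2}{3(1-t^2)}$, which simplifies to $\tfrac{1}{12n(n+1)} = \tfrac{1}{12n} - \tfrac{1}{12(n+1)}$. Telescoping the inequality $d_n - d_{n+1} < \tfrac{1}{12n} - \tfrac{1}{12(n+1)}$ shows $d_n - \tfrac{1}{12n}$ is strictly increasing to $0$, hence $d_n < \tfrac{1}{12n}$.

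The lower bound is the main obstacle, since one cannot afford to discard higher-order terms. Here I would proceed by a refined rational comparison: isolating the leading term via
$$\frac{\operatorname{artanh}(t)}{t} - 1 \;=\; \frac{t^2}{3} + \sum_{k=2}^{\infty}\frac{t^{2k}}{2k+1},$$
and then checking by clearing denominators that the resulting expression in $n$ dominates $\tfrac{12}{(12n+1)(12n+13)} = \tfrac{1}{12n+1} - \tfrac{1}{12(n+1)+1}$; this reduces to verifying a polynomial inequality in $n$ that holds for all $n \geq 1$. Once this is established, the same telescoping argument gives that $d_n - \tfrac{1}{12n+1}$ is strictly decreasing to $0$, so $d_n > \tfrac{1}{12n+1}$, completing the proof.
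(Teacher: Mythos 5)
The paper does not prove this lemma; it is stated in Appendix B as a classical fact (Robbins, 1955) without argument. So there is no in-text proof to compare your work against; I can only assess the correctness and completeness of your sketch.

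Your approach is the standard one (and in fact Robbins' original one): reduce to a two-sided bound on the Stirling remainder $d_n$, compute the consecutive difference $d_n-d_{n+1}=\sum_{k\ge1} t^{2k}/(2k+1)$ with $t=1/(2n+1)$, show the differences are sandwiched between the corresponding differences of $1/(12n+1)$ and $1/(12n)$, and telescope against the limit $d_n\to 0$. The upper bound and the telescoping logic are correct and fully justified. The gap is in the lower bound. As written, you isolate $t^2/3$ and then say the comparison against $\tfrac{12}{(12n+1)(12n+13)}$ ``reduces to a polynomial inequality'' after clearing denominators. But if you discard the tail $\sum_{k\ge 2}$ and keep only $t^2/3=\tfrac{1}{3(2n+1)^2}$, the comparison actually fails: cross-multiplying gives $(12n+1)(12n+13)-36(2n+1)^2 = 24n-23>0$, which means $\tfrac{1}{3(2n+1)^2}<\tfrac{12}{(12n+1)(12n+13)}$ for all $n\ge1$, i.e.\ the wrong direction. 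And if you keep the full infinite series, there is no ``polynomial inequality'' to verify. You must retain a finite or rational underestimate of the tail. The cleanest fix is the geometric bound $\tfrac{1}{2k+1}\ge 3^{-k}$ (equality at $k=1$, strict for $k\ge2$), giving
\begin{align*}
d_n-d_{n+1}>\sum_{k\ge1}\frac{t^{2k}}{3^k}=\frac{t^2}{3-t^2}=\frac{1}{12n^2+12n+2},
\end{align*}
and then the polynomial check becomes $(12n+1)(12n+13)-12(12n^2+12n+2)=24n-11>0$, which indeed holds for all $n\ge1$. Alternatively one may keep the two leading terms $t^2/3+t^4/5$ and check the resulting cubic inequality. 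Either way, the lower-bound step needs to be made precise; once it is, the argument is complete and correct.
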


\subsection{Catalan number}\label{sec:cat}
Let us recall some properties of Catalan number. Set $C_n=\frac{1}{n}\binom{2n-2} {n-1}$. Since we have
\begin{align*}
    \sum_{n=1}^\infty C_nx^n=\frac{1-\sqrt{1-4x}}{2},
\end{align*}
we would get
\begin{align*}
    (\sum_{n=1}^\infty C_nx^n)^2-\sum_{n=1}^\infty C_nx^n+x=0.
\end{align*}

This corresponds to
\begin{align*}
    \sum_{n=1}^\infty C_nx^n=\frac{x}{1- \sum_{n=1}^\infty C_nx^n}=x(1+\sum_{m=1}^\infty(\sum_{k=1}^\infty C_kx^k)^m).
\end{align*}
Note that we also have 
\begin{align*}
    \sum_{n=1}^\infty C_nx^n=(\sum_{n=1}^\infty C_nx^n)^2+x.
\end{align*}
So we have that 
\begin{Lem}\label{l:Catalan variable branching}
    \begin{align}
    C_n&=\sum_{m=1}^{n-1}\sum\limits_{\sum\limits_{i=1}^mk_i=n-1}\prod\limits_{i=1}^mC_{k_i},\\ 
    C_1&=1.
\end{align}

Also  we have

\begin{equation*}
    C_n = \sum_{m=1}^{n-1} C_{m} C_{n-1-m}, C_1 =1.
\end{equation*}
\end{Lem}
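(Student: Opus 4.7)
The plan is to derive both identities as consequences of the single algebraic relation $f^2 - f + x = 0$ satisfied by the generating function $f(x) = \sum_{n\geq 1}C_n x^n$, as already indicated in the discussion preceding the statement. The closed form $f(x) = \tfrac{1}{2}(1-\sqrt{1-4x})$ can be verified via Newton's generalized binomial expansion together with the explicit formula $C_n = \tfrac{1}{n}\binom{2n-2}{n-1}$; alternatively, one squares the series, collects terms by degree, and checks $f^2 = f - x$ directly. All further work is coefficient extraction from this quadratic.

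For the binary recurrence, I would rewrite the quadratic as $f = x + f^2$ and read off $[x^n]$ on both sides. The term $x$ contributes only at $n = 1$, which pins down the base case $C_1 = 1$, while for $n \geq 2$ the convolution $[x^n]f^2$ equals $\sum_{m \geq 1} C_m C_{n-m}$ over the admissible range of $m$, giving the displayed formula.

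For the multi-branching identity, I would instead use the factored form $f(1-f) = x$, rewritten as $f = x/(1-f)$, and expand the geometric series in the variable $f$. This is legitimate as an identity of formal power series because $f$ has no constant term, so only finitely many terms of the expansion contribute to each degree. This yields
\begin{equation*}
    f \;=\; x\sum_{m=0}^{\infty} f^m \;=\; x \;+\; x\sum_{m\geq 1}\Bigl(\sum_{k\geq 1}C_k x^k\Bigr)^m.
\end{equation*}
Expanding each $f^m$ as a multinomial and extracting $[x^n]$ for $n\geq 2$ produces exactly the inner sum $\sum_{k_1+\cdots+k_m = n-1}\prod_i C_{k_i}$ with each $k_i \geq 1$; the constraint $k_i \geq 1$ forces $m \leq n-1$, which gives the upper limit of the outer sum.

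No serious obstacle is expected. The whole argument is a bookkeeping of coefficient extractions from the two equivalent rewritings $f = x + f^2$ and $f = x/(1-f)$ of the same quadratic relation, and the only subtlety, justifying the formal manipulations in the ring of formal power series, is immediate from $f(0) = 0$.
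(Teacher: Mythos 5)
Your proposal is correct and is essentially the paper's own argument: the text immediately preceding the lemma derives the generating-function quadratic $f^2 - f + x = 0$, then reads off the multi-branching identity from $f = x/(1-f) = x\sum_{m\ge 0} f^m$ and the binary recurrence from $f = x + f^2$, exactly as you do. One small aside: coefficient extraction from $f = x + f^2$ gives $C_n = \sum_{m=1}^{n-1} C_m C_{n-m}$ for $n \ge 2$, as you wrote; the index $C_{n-1-m}$ appearing in the paper's display is a typo.
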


\subsection{A product lemma}

\begin{Lem}\label{l:prod}
    Given positive numbers $i=\sum_{j=1}^ki_j$, we have that for any $v=0,1,2$, 
    \begin{align}
        \label{e:prod}\prod_{j=1}^k\frac{i_j!}{(2i_j-v)!}\leq (\frac{[\frac{i}{k}]!}{(2[\frac{i}{k}]-v)!})^{k-i+k[\frac{i}{k}]}(\frac{([\frac{i}{k}]+1)!}{(2[\frac{i}{k}]+2-v)!})^{i-k[\frac{i}{k}]}.
    \end{align}
\end{Lem}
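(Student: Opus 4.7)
The plan is to prove \eqref{e:prod} by a smoothing (rearrangement) argument, showing that among all tuples of positive integers $(i_1,\dots,i_k)$ summing to $i$, the product $\prod_j f(i_j)$ with $f(x):=x!/(2x-v)!$ is maximized exactly when the $i_j$'s are as balanced as possible — namely $r:=i-k\lfloor i/k\rfloor$ of them equal $\lfloor i/k\rfloor+1$ and the remaining $k-r$ equal $\lfloor i/k\rfloor$. Evaluating $\prod_j f(i_j)$ at this balanced tuple reproduces exactly the right-hand side of \eqref{e:prod}, so the statement will follow once the maximality of the balanced configuration is established.

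The first step is to analyze the effect on $\prod_j f(i_j)$ of the elementary ``swap'' $(i_a,i_b)\mapsto(i_a-1,i_b+1)$ applied whenever $i_a\geq i_b+2$. Writing
$$g(x):=\frac{f(x)}{f(x-1)}=\frac{x}{(2x-v)(2x-1-v)},$$
the product changes by the factor $g(i_b+1)/g(i_a)$. Thus the swap does not decrease the product provided $g$ is decreasing on positive integers, since the hypothesis $i_a\geq i_b+2$ gives $i_b+1\leq i_a-1<i_a$.

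The key (and essentially only nontrivial) computation is the monotonicity of $g$, which I would handle uniformly for $v\in\{0,1,2\}$ as follows. The inequality $g(x+1)<g(x)$ is equivalent, after clearing denominators and setting $a=2x-v$, to $a(a-1)<2x(2a+1)$, which in turn simplifies to the clean bound
$$v(v+1)<4x(x+1).$$
This is immediate for all $v\in\{0,1,2\}$ and $x\geq 1$ (the worst case being $v=2$, $x=1$, i.e.\ $6<8$). Hence $g$ is strictly decreasing on the relevant range, and every admissible swap weakly increases $\prod_j f(i_j)$.

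Finally I would conclude by iterating: each swap strictly decreases $\sum_j i_j^2$ while preserving $\sum_j i_j=i$ and positivity, so the process terminates in finitely many steps, necessarily at a tuple in which no two entries differ by $2$ or more — precisely the balanced configuration described above. Since every step weakly increases the product, the original product is bounded above by the value at the balanced configuration, which yields exactly \eqref{e:prod}. The main (mild) obstacle is the uniformity across $v\in\{0,1,2\}$; the reduction to $v(v+1)<4x(x+1)$ bypasses any case-by-case algebra and makes the monotonicity of $g$ transparent.
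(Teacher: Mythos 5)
Your proof is correct and takes essentially the same approach as the paper's: both hinge on the monotonicity of the ratio $g(x)=f(x)/f(x-1)=x/\bigl((2x-v)(2x-1-v)\bigr)$, the paper via a derivative computation of $g$ applied to an assumed maximizer, and you via the discrete difference $g(x+1)<g(x)$ plus an explicit termination argument using $\sum_j i_j^2$, arriving at the same balanced configuration. One cosmetic point: the monotonicity should be stated for $x\geq 2$ rather than $x\geq 1$, since for $v\in\{1,2\}$ the quantity $g(1)$ is undefined (division by zero) and, when $v=2$ and $x=1$, the factor $2x-1-v$ is negative so clearing denominators flips the inequality; your application only ever invokes $g$ at arguments $\geq 2$, so the argument is unaffected.
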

\begin{proof}
    Since given $i$, there is only finitely many choice of $i_j$, without loss of generality let us assume that $i_j$ attains the maximum among all possible choices.
    
    We claim that given the choice attains the maximum, for any $i_l,i_m$ we have, \begin{align*}
        |i_l-i_m|\leq 1.
    \end{align*}
    Let us prove the claim by contradiction. If $|i_1-i_2|\geq 2$, without loss of generality we could assume that $i_2\geq i_1+2\geq 3$. Given that $i_j$ attains the maximum, in particular we have
    \begin{align*}
        \frac{(i_1+1)!}{(2i_1+2-v)!}\frac{(i_2-1)!}{(2i_2-2-v)!}\leq \frac{i_1!}{(2i_1-v)!} \frac{i_2!}{(2i_2-v)!}.
    \end{align*}
Thus 
\begin{align*}
    \frac{i_1+1}{(2i_2+2-v)(2i_2+1-v)}\leq\frac{i_2}{(2i_2-v)(2i_2-1-v)}.
\end{align*}

Since\begin{align*}
    \frac{d}{dx}(\frac{x}{(2x-v)(2x-1-v)})=&\frac{x}{(2x-v)(2x-1-v)}(\frac{1}{x}-\frac{2}{2x-v}-\frac{2}{2x-1-v})\\=&\frac{1}{(2x-v)^2(2x-1-v)^2}((2x-v)(2x-1-v)-2x(2x-v-1)-2x(2x-v))\\ =&\frac{1}{(2x-v)^2(2x-1-v)^2}(v+v^2-4x^2),\\
\end{align*}
and $v+v^2-4x^2\leq 6-4x^2<0$ for $x\geq 2$, combining with that $i_1+1,i_2\geq2$ we would have that $i_1+1\geq i_2$. This contradicts with that $i_2\geq i_1+2$. So we proved the claim that  
    \begin{align*}
        |i_l-i_m|\leq 1,\,\,\forall\,l,m\in\{1,2,\cdots,k\}.
    \end{align*}

Combining with that $\sum_{j=1}^ki_j=i$, we obtained the \eqref{e:prod}.
    
\end{proof}

Combining this with Robbin inequality \ref{l:Robbin}, we obtain the following estimate:
\begin{Lem}\label{l:prod estimate}
    Given positive integers $i=\sum_{j=1}^k i_j$, then for any $v=0,1,2$
    \begin{equation*}
        \prod_{j=1}^k \frac{i_j!}{(2i_j - v)!} \le C^{i} \cdot (\frac{k}{i})^{i-5k} \le C^i \cdot \frac{k^{i}}{i!}.
     \end{equation*}
\end{Lem}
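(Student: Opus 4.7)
The plan is to apply Lemma~\ref{l:prod} to reduce to the balanced case, and then bound the resulting quotient of factorials with an elementary estimate. Set $q = [i/k]\ge 1$ (positive since $i\ge k$ because each $i_j \ge 1$) and $r = i-kq \in [0,k)$. Lemma~\ref{l:prod} gives
\[
\prod_{j=1}^k \frac{i_j!}{(2i_j-v)!} \le A^{k-r} B^r,\qquad A = \frac{q!}{(2q-v)!},\quad B = \frac{(q+1)!}{(2q+2-v)!}.
\]
The key elementary input I would use is that for $n\ge 1$ and $v\in\{0,1,2\}$,
\[
\frac{(2n-v)!}{n!} = (n+1)(n+2)\cdots(2n-v) \ge n^{n-v},\quad\text{hence}\quad \frac{n!}{(2n-v)!} \le \frac{1}{n^{n-v}}.
\]
Applied to $A$ and $B$, and using $(q+1)^{(q+1-v)r} \ge q^{(q+1-v)r}$ to collapse the two factors to a common base $q$, the exponent simplifies via $(q-v)(k-r) + (q+1-v)r = (q-v)k + r = i-kv$, giving
\[
A^{k-r}B^r \le \frac{1}{q^{i-kv}}.
\]

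The first inequality then follows by a case split. If $i \ge 2k$, then $q > i/k - 1 \ge i/(2k)$, so $1/q \le 2k/i$, and
\[
A^{k-r}B^r \le (2k/i)^{i-kv} \le 2^i (k/i)^{i-kv} \le 2^i (k/i)^{i-5k},
\]
where the last step uses $k/i \le 1$ together with $i - kv \ge i - 2k \ge i - 5k$. If instead $k \le i < 2k$, then $q = 1$, and a direct check gives $A, B \le 1$ for each $v\in\{0,1,2\}$, so $A^{k-r}B^r \le 1 \le (k/i)^{i-5k}$, the last inequality holding because $i - 5k < 0$ and $k/i \le 1$. Together these give the first bound with $C=2$.

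For the second inequality $(k/i)^{i-5k} \le C^i k^i/i!$, I would invoke Robbins' inequality (Lemma~\ref{l:Robbin}) in the form $i! \le C\sqrt{i}\,(i/e)^i$ to rewrite
\[
\frac{(k/i)^{i-5k}}{k^i/i!} \le C\sqrt{i}\cdot \exp\!\bigl(5k\log(i/k) - i\bigr).
\]
The one-variable calculus fact $5\log x - x \le 5\log 5 - 5$ for $x\ge 1$, applied with $x = i/k$, bounds the exponent by a constant multiple of $k\le i$, which is absorbed into $C^i$. The main technical subtlety is the case split in the first inequality: the bound $1/q \le 2k/i$ degenerates as $q\to 1$, which is precisely the small-$i$ regime $k\le i < 2k$ handled separately by the trivial estimates $A,B\le 1$ and $(k/i)^{i-5k}\ge 1$.
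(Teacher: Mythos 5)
Your proof is correct. You follow the paper's skeleton—apply Lemma~\ref{l:prod} to reduce to the balanced configuration $q=[i/k]$, $r=i-kq$—but finish the first inequality differently and more elementarily. The paper crudely bounds both factors of $A^{k-r}B^r$ by the single quantity $([i/k]+1)!/(2[i/k]-2)!$, applies Robbins' inequality \ref{l:Robbin} to both resulting factorials, restricts to $i>100k$ and dismisses the remaining range with "it is easy to see." You instead keep the $A^{k-r}B^r$ split, use the elementary quotient bound $(2n-v)!/n!=(n+1)\cdots(2n-v)\geq n^{n-v}$, and collapse to $q^{-(i-kv)}$ via the exponent identity $(q-v)(k-r)+(q+1-v)r=i-kv$; the case split $i\geq 2k$ versus $k\leq i<2k$ then closes the first inequality with $C=2$, entirely without Stirling. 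This buys a fully explicit, self-contained first step (all cases checked, including the degenerate $q=1$, $v=2$ empty-product situation) and confines Robbins to the second inequality, where both you and the paper use essentially the same calculation—your one-variable bound $5\log x - x \leq 5\log 5 - 5$ at $x=i/k$ is the transparent version of what the paper packages into exponent arithmetic. Net effect: same outcome, a slightly longer write-up on your side, but a cleaner division of labor and no appeal to "easy to see."
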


\begin{proof}
It is easy to see that the result follows when $i \le 100k$. In the following we only consider the case where $i > 100k$.
    By \eqref{e:prod}, we have
    \begin{align*}
        \prod_{j=1}^k \frac{i_j!}{(2i_j - v)!} \le \Big(\frac{([\frac{i}{k}]+1)!}{(2 [\frac{i}{k}] - 2)!} \Big)^k.
    \end{align*}

Then by Robbin's inequality \ref{l:Robbin}, we have
\begin{align*}
    \Big(([\frac{i}{k}]+1)! \Big)^k \le C \cdot \frac{\sqrt{[\frac{i}{k}]+1}}{e^{[\frac{i}{k}] +1 }} \cdot ([\frac{i}{k}]+1)^{([\frac{i}{k}]+1) \cdot k} \le C \cdot \frac{\sqrt{[\frac{i}{k}]+1}}{e^{[\frac{i}{k}] +1 }} \cdot \Big( \frac{i+k}{k} \Big)^{i+k},
\end{align*}
and
\begin{align*}
    \Big((2[\frac{i}{k}] -2)! \Big)^k &\ge C \cdot \frac{\sqrt{2[\frac{i}{k}]-2}}{e^{2[\frac{i}{k}] -2 }} \cdot (2[\frac{i}{k}] -2)^{(2[\frac{i}{k}] -2) \cdot k} \\
    &\ge C \cdot \frac{\sqrt{2[\frac{i}{k}]-2}}{e^{2[\frac{i}{k}] -2 }} \cdot (\frac{2i}{k} - 4)^{(\frac{2i}{k} -4)\cdot k} \\
    &\ge C \cdot \frac{\sqrt{2[\frac{i}{k}]-2}}{e^{2[\frac{i}{k}] -2 }} \cdot \Big( \frac{2i - 4k}{k} \Big)^{2i - 4k}.
\end{align*}

Since $i>100k$, then we have $i+k < 1.1 i$ and $2i-4k > 1.9 i$. 
Therefore,
\begin{align*}
    \Big(\frac{([\frac{i}{k}]+1)!}{(2 [\frac{i}{k}] - 2)!} \Big)^k &\le C^{i} \cdot \frac{(i+k)^{i+k}}{(2i-4k)^{2i-4k}} \cdot \frac{k^{2i-4k}}{k^{i+k}} \\
    &\le  C^i \cdot \frac{(1.1 i)^{i+k}}{(1.9i) ^{2i-4k}} \cdot k^{i-5k} \\
    &\le C^i \cdot \frac{i^{k}}{i^{i-4k}} \cdot k^{i-5k}\\
    &\le C^i \cdot (\frac{k}{i})^{i-5k} \\
    &\le C^i \cdot \frac{k^{i}}{i!}.
\end{align*}

\end{proof}

\section{Lemmas on functions}\begin{Lem}   
For functions $g\geq 0$ and $f<0$,  if $|f^{(i)}|\leq i!g^i|f|$, we would have that
\begin{align*}|(e^f)^{(i)}|<i!e^{(1-\ep)f}(\frac{2}{\ep}g)^i.\end{align*}\label{l:func1}
\end{Lem}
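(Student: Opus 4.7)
The plan is to apply Faà di Bruno's formula to $(e^f)^{(n)}$, reduce the resulting sum over set partitions to an explicit one-variable sum via standard combinatorial identities, and then bound that sum with elementary Taylor and geometric-series estimates. Concretely, Faà di Bruno gives
\[
(e^f)^{(n)} \;=\; e^f \sum_{\pi} \prod_{B \in \pi} f^{(|B|)},
\]
where the outer sum runs over set partitions $\pi$ of $\{1,\dots,n\}$ and $|B|$ denotes the size of a block $B$. Taking absolute values and applying the hypothesis $|f^{(j)}| \leq j!\, g^j |f|$ block-by-block yields
\[
|(e^f)^{(n)}| \;\leq\; e^f\, g^n \sum_{\pi} |f|^{|\pi|} \prod_{B \in \pi} |B|!\,.
\]

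Next I would collapse the partition sum into a one-variable sum. Group set partitions by their block-size multiplicities $(m_j)$, so that $\sum_j j\,m_j = n$. The number of set partitions with these multiplicities is $n!/\prod_j m_j!(j!)^{m_j}$, so the factors $\prod_B |B|! = \prod_j (j!)^{m_j}$ cancel exactly against the denominator. Grouping further by the total number of blocks $M := \sum_j m_j = |\pi|$ and recognizing $\sum_{(m_j)} M!/\prod m_j! = \binom{n-1}{M-1}$ (the number of compositions of $n$ into $M$ positive parts) collapses the partition sum to
\[
\sum_\pi |f|^{|\pi|} \prod_B |B|! \;=\; n! \sum_{M=1}^n \frac{|f|^M}{M!} \binom{n-1}{M-1}.
\]
I then estimate the right side via $\binom{n-1}{M-1} \leq 2^{n-1}$, the Taylor bound $|f|^M/M! \leq \ep^{-M} e^{\ep|f|}$ (from truncating $\sum_M (\ep|f|)^M/M! \le e^{\ep|f|}$), and the geometric sum $\sum_{M=1}^n \ep^{-M} \leq \ep^{-n}/(1-\ep) \leq 2\ep^{-n}$ (valid for $\ep \leq 1/2$, which is the relevant regime), producing $\sum_M \frac{|f|^M}{M!}\binom{n-1}{M-1} \leq (2/\ep)^n e^{\ep|f|}$. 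Multiplying back by $e^f g^n n!$ and using the sign hypothesis $f<0$, so that $|f|=-f$ and hence $e^f\,e^{\ep|f|} = e^{(1-\ep)f}$, one recovers exactly $|(e^f)^{(n)}| \leq n!\,(2g/\ep)^n\,e^{(1-\ep)f}$.

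The main obstacle is the combinatorial reduction in the middle step: carefully tracking how the factors $(j!)^{m_j}$ cancel against the Faà di Bruno denominators, and identifying $\binom{n-1}{M-1}$ as the correct composition count so that the partition sum really does collapse to a one-dimensional sum weighted only by $|f|^M/M!$. I would note that a naive induction on $n$ via $\phi^{(n+1)} = \sum_k \binom{n}{k} f^{(k+1)}\phi^{(n-k)}$ cannot close, because the induction step would produce a spurious factor $|f|$ that is not bounded by anything; the whole point of the Faà di Bruno route is that summing over partitions yields many powers $|f|^M$ at once, and these are exactly what the factor $e^f = e^{-|f|}$ is able to absorb through the Taylor inequality for $e^{\ep|f|}$.
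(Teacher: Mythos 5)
Your proposal is correct and follows essentially the same approach as the paper's proof. The paper starts directly from the composition form $(e^f)^{(i)}=e^f\sum_{k=1}^{i}\sum_{\sum_{j} i_j=i} \frac{i!}{k!\prod_j i_j!}\prod_j f^{(i_j)}$ (which is exactly your set-partition Faà di Bruno formula regrouped by block sizes, with the $\prod_B |B|!$ cancellation already built in), applies the hypothesis to get $e^f g^i \sum_{k} \frac{i!}{k!}|f|^k\binom{i-1}{k-1}$, bounds the binomial by $2^{i-1}$, and sums via $\sum_k (\ep|f|)^k/k!\le e^{\ep|f|}=e^{-\ep f}$; your route differs only in that you bound $|f|^M/M!\le \ep^{-M}e^{\ep|f|}$ term-by-term and then sum the geometric series, which is a marginally lossier but entirely equivalent way to land on the same bound $i!\,(2g/\ep)^i\,e^{(1-\ep)f}$.
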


\begin{proof}    
Also we have that     
\begin{align*}    (e^f)^{(i)}=e^f\sum_{k=1}^{i }\sum_{\sum\limits_{j=1}^k i_j=i} \frac{i!}{k!\prod\limits_{j=1}^ki_j!}\prod_{j=1}^k f^{(i_j)}.  
\end{align*}

Combining with assumptions we have 
\begin{align*}    | (e^f)^{(i)}|\leq e^fg^i\sum_{k=1}^i\frac{i!}{k!}|f|^{k}{ \binom{i-1}{k-1}}\leq i!e^f(\frac{2}{\ep}g)^i\sum_{k=1}^{i}\frac{(\ep|f|)^{k}}{k!}\leq  i!e^f(\frac{2}{\ep}g)^ie^{-\ep f}= i!e^{(1-\ep)f}(\frac{2}{\ep}g)^i
\end{align*}

\end{proof}

\begin{Lem} \label{l:func2}
  For functions $g\geq 0$ and $f$,  if $|f^{(i)}|\leq i!g^i|f|^{1-\ep}$, we would have that
    \begin{align*}
        (\frac{1}{f})^{(i)}\leq i!(4g)^i(\frac{1}{|f|})^{1+i\ep}.
    \end{align*}
\end{Lem}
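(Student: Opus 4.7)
The plan is to mirror the argument used for Lemma~\ref{l:func1}, with the composition $h\circ f$ where $h(y)=1/y$ in place of $h(y)=e^y$. First I would apply Fa\`a di Bruno's formula, using $h^{(k)}(y)=(-1)^k k!/y^{k+1}$, to obtain
\begin{equation*}
    \left(\frac{1}{f}\right)^{(i)} = \sum_{k=1}^i \frac{(-1)^k}{f^{k+1}} \sum_{\substack{i_1+\cdots+i_k=i\\ i_j\geq 1}} \frac{i!}{\prod_{j=1}^k i_j!}\,\prod_{j=1}^k f^{(i_j)}.
\end{equation*}

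Next, I would plug the hypothesis $|f^{(i_j)}|\leq i_j!\,g^{i_j}|f|^{1-\ep}$ into each factor, which gives $\bigl|\prod_j f^{(i_j)}\bigr|\leq g^i |f|^{k(1-\ep)}\prod_j i_j!$. Combining this with the fact that the number of ordered compositions of $i$ into exactly $k$ positive parts is $\binom{i-1}{k-1}$, the factors $\prod_j i_j!$ cancel against the multinomial denominators and one is left with
\begin{equation*}
    \left|\left(\frac{1}{f}\right)^{(i)}\right| \leq \sum_{k=1}^i \frac{i!\,g^i}{|f|^{1+k\ep}}\binom{i-1}{k-1}.
\end{equation*}

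Then, factoring out $|f|^{-(1+i\ep)}$ and reindexing $l=i-k$, the binomial theorem collapses the $k$-sum:
\begin{equation*}
    \sum_{k=1}^i \frac{1}{|f|^{1+k\ep}}\binom{i-1}{k-1} = \frac{1}{|f|^{1+i\ep}}\sum_{l=0}^{i-1} |f|^{l\ep}\binom{i-1}{l} = \frac{(1+|f|^\ep)^{i-1}}{|f|^{1+i\ep}}.
\end{equation*}
In the intended application, $f$ will be one of the $f_N$ from Subsection~\ref{subsection:T}, which tends to $0$ near the singular time, so the mild bound $|f|^\ep\leq 3$ is automatic; hence $(1+|f|^\ep)^{i-1}\leq 4^{i-1}\leq 4^i$, which produces the claimed inequality $|(1/f)^{(i)}|\leq i!(4g)^i|f|^{-(1+i\ep)}$.

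The main obstacle is purely bookkeeping: one has to keep track of the combinatorial factors from Fa\`a di Bruno and verify that the compositional counting argument matches the structure of the hypothesis. No new analytic input is needed beyond the standard composition formula and the same binomial trick that underlies Lemma~\ref{l:func1}; the only delicate point is absorbing the factor $(1+|f|^\ep)^{i-1}$ into the constant $4^i$, which is harmless in the regime of interest.
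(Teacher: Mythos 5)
Your proof is correct and follows essentially the same route as the paper's: Fa\`a di Bruno with $h(y)=1/y$, cancellation of the $\prod_j i_j!$ factors against the multinomial denominators, the composition count $\binom{i-1}{k-1}$, and a final bound by $4^i$. The only cosmetic difference is that you close the $k$-sum exactly via the binomial theorem as $(1+|f|^\ep)^{i-1}/|f|^{1+i\ep}$, whereas the paper implicitly bounds each $|f|^{-1-k\ep}\le|f|^{-1-i\ep}$ and uses $\sum_k\binom{i-1}{k-1}=2^{i-1}$; both steps rely on the same (unstated) smallness of $|f|$ that you correctly flag as automatic in the application.
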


\begin{proof}
     We have that     
\begin{align*}    (\frac{1}{f})^{(i)}=\sum_{k=1}^{i }\frac{(-1)^kk!}{f^{k+1}}\sum_{\sum\limits_{j=1}^k i_j=i} \frac{i!}{k!\prod\limits_{j=1}^ki_j!}\prod_{j=1}^k f^{(i_j)}=\sum_{k=1}^{i }\frac{(-1)^k}{f^{k+1}}\sum_{\sum\limits_{j=1}^k i_j=i} \frac{i!}{\prod\limits_{j=1}^ki_j!}\prod_{j=1}^k f^{(i_j)}.  
\end{align*}

Combining with assumptions we can get
\begin{align*}
    |(\frac{1}{f})^{(i)}|\leq g^i\sum_{k=1}^ii!|f|^{-1-k\ep}\binom{i-1} {k-1}\leq i!(4g)^i(\frac{1}{|f|})^{1+i\ep}    ,
\end{align*}

\end{proof}

\begin{Lem}\label{l:fun}
    Let us take $f_0=t^2$ and $f_{k+1}=e^{-\frac{1}{f_k}}$, $k\geq 0$. Then we have that 
    \begin{align*}
        |f_k^{(i)}|\leq i!f_k^{1-\ep}(\frac{2}{t}(\frac{8}{\ep})^{k-1}\prod_{j=0}^{k-1}\frac{1}{f_j^\ep})^i  .                           
    \end{align*}
\end{Lem}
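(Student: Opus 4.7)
The plan is to prove the bound by induction on $k$, using Lemma \ref{l:func1} and Lemma \ref{l:func2} in tandem to pass from $f_k$ to $f_{k+1} = e^{-1/f_k}$. The inductive hypothesis is precisely the statement of the lemma with constant $g_k := \frac{2}{t}\bigl(\frac{8}{\epsilon}\bigr)^{k-1}\prod_{j=0}^{k-1} f_j^{-\epsilon}$, so we aim to show the recursion $g_{k+1} = \frac{8}{\epsilon}\cdot \frac{g_k}{f_k^{\epsilon}}$, which is exactly the transformation of $g_k$ under shifting $k \mapsto k+1$.

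For the base case $k=0$, I would verify the estimate for $f_0 = t^2$ directly. Only $f_0, f_0', f_0''$ are nonzero, and each of the three inequalities reduces to a condition of the form $t^{2\epsilon} \le c(\epsilon)$, which holds on a suitably small interval $t \in (0, t_0(\epsilon)]$ — this is compatible with the regime in which the whole construction of Section 5 takes place.

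For the inductive step, assume $|f_k^{(i)}| \le i!\, g_k^i\, f_k^{1-\epsilon}$. The hypothesis of Lemma \ref{l:func2} holds for $f = f_k$ with $g = g_k$, so
\[
\bigl|(1/f_k)^{(i)}\bigr| \le i!\,(4 g_k)^i f_k^{-(1+i\epsilon)} = i!\,\Bigl(\tfrac{4 g_k}{f_k^{\epsilon}}\Bigr)^{\!i}\cdot\tfrac{1}{f_k}.
\]
Next set $h := -1/f_k$, which is negative on the relevant region. The displayed inequality reads $|h^{(i)}| \le i!\,(4g_k/f_k^{\epsilon})^i |h|$, so Lemma \ref{l:func1} applies with $g = 4 g_k / f_k^{\epsilon}$ and gives
\[
|f_{k+1}^{(i)}| = \bigl|(e^h)^{(i)}\bigr| < i!\, e^{(1-\epsilon) h}\Bigl(\tfrac{2}{\epsilon}\cdot \tfrac{4 g_k}{f_k^{\epsilon}}\Bigr)^{\!i} = i!\, f_{k+1}^{\,1-\epsilon}\Bigl(\tfrac{8 g_k}{\epsilon f_k^{\epsilon}}\Bigr)^{\!i}.
\]
Reading off the constant, $g_{k+1} = \tfrac{8}{\epsilon}\tfrac{g_k}{f_k^{\epsilon}} = \tfrac{2}{t}\bigl(\tfrac{8}{\epsilon}\bigr)^{k}\prod_{j=0}^{k} f_j^{-\epsilon}$, which matches the claimed formula for $k+1$.

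The main obstacle is bookkeeping rather than depth: one must use the \emph{same} $\epsilon$ in each application of Lemma \ref{l:func1} so that the exponent $1-\epsilon$ on $f_{k+1}$ matches the inductive hypothesis exactly (rather than decaying to $1-k\epsilon$ over iterations), and one must track that precisely one new factor $f_k^{-\epsilon}$ and one new factor $8/\epsilon$ appear at each step. A secondary point is the base case: at $k=0$ the formula $(8/\epsilon)^{-1}$ yields a strict estimate only on a small $t$-interval, but this restriction is harmless because Theorem \ref{t:Tikhonov} is stated in a neighborhood of $t = 0$ anyway, and the inductive step only enlarges the admissible range of $\epsilon$ without shrinking the $t$-interval further.
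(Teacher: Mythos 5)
Your proof is correct and follows essentially the same route as the paper: apply Lemma \ref{l:func2} to pass from $f_k$ to $1/f_k$, then Lemma \ref{l:func1} to pass from $-1/f_k$ to $f_{k+1}=e^{-1/f_k}$, reading off the recursion $g_{k+1}=\tfrac{8}{\ep f_k^{\ep}}g_k$. You are in fact a bit more careful than the paper's terse write-up on the $k=0$ base case (and you cite the two auxiliary lemmas in the logically correct order, whereas the paper's proof text swaps the references to \ref{l:func1} and \ref{l:func2}), but the substance is identical.
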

\begin{proof}
From Lemma \ref{l:func1}  we have that if $|f_{k}^{(i)}|\leq i!g_k^if_k^{1-\ep}$, we can get
\begin{align*}
        |(-\frac{1}{f_k})^{(i)}|\leq i!(\frac{4g}{f_k^\ep})^i(\frac{1}{f_k})                .
\end{align*}
Combining with Lemma \ref{l:func2} we have
\begin{align*}
    |f_{k+1}^{(i)}|\leq i!f_{k+1}^{1-\ep}(\frac{8g_k}{\ep f_k^\ep})^i.
\end{align*}
 Thus we can get the recursive formula 
 \begin{align*}
     g_{k+1}=\frac{8}{\ep f_k^\ep}g_k.
 \end{align*}
Combining with $g_0=\frac{2}{t}$, the proof is finished.

\end{proof}

\end{document}